\numberwithin{equation}{section}
\newtheorem{theorem}{Theorem}[section]
\newtheorem{lemma}[theorem]{Lemma}
\newtheorem{corollary}[theorem]{Corollary}
\newtheorem{proposition}[theorem]{Proposition}
\theoremstyle{definition}
\newtheorem{remark}[theorem]{Remark}
\newtheorem{example}[theorem]{Example}
\newtheorem{question}[theorem]{Question}
\newtheorem{conjectures}[theorem]{Conjectures}
\def\N{\ensuremath{\mathbb{N}}}
\def\Z{\ensuremath{\mathbb{Z}}}
\def\Q{\ensuremath{\mathbb{Q}}}
\newcommand{\pa}[1]{\left(#1\right)}
\newcommand{\cpa}[1]{\left\{#1\right\}}
\newcommand{\wt}[1]{\widetilde{#1}}
\newcommand{\tn}[1]{\textnormal{#1}}
\newcommand{\br}[1]{\left[#1\right]}
\newcommand{\fg}[1]{\left\langle #1\right\rangle}
\newcommand{\cov}[1]{\tn{Cov}\pa{#1}}
\def\fa{f^{\ast}}
\def\fs{f_{\sharp}}
\newcommand{\bs}[1]{\pa{\uppercase{#1},{\lowercase{#1}}_{0}}}
\newcommand{\p}[1]{\pi_1{\bs{#1}}}
\newcommand{\im}[1]{\tn{Im} \, #1}
\newcommand{\rest}[1]{\left.#1\right|}
\newcommand{\card}[1]{\left|#1\right|}
\newcommand{\burn}[1]{\mathcal{B}\hspace{-.2em}\pa{#1}}
\newcommand{\gset}[1]{{#1}\tn{-\textbf{set}}}
\def\ind{\tn{Ind}}
\def\res{\tn{Res}}
\def\cov{\tn{Cov}}
\def\b{\backslash}
\def\s{\slash}
\font\cuf=cmtt8
\newcommand{\curl}[1]{{\cuf #1}}
\begin{document}
\title{Topological and algebraic pullback functors}

\author[J.S.~Calcut]{Jack S. Calcut}
\address{Department of Mathematics\\
         Oberlin College\\
         Oberlin, OH 44074}
\email{jcalcut@oberlin.edu}
\urladdr{\href{http://www.oberlin.edu/faculty/jcalcut/}{\curl{http://www.oberlin.edu/faculty/jcalcut/}}}

\author[J.D.~McCarthy]{John D. McCarthy}
\address{Department of Mathematics\\
                    Michigan State University\\
                    East Lansing, MI 48824-1027}
\email{mccarthy@math.msu.edu}
\urladdr{\href{http://www.math.msu.edu/~mccarthy/}{\curl{http://www.math.msu.edu/\textasciitilde mccarthy/}}}

\author[J.J.~Walthers]{Jeremy J. Walthers}

\keywords{Pullback functor, covering space, $G$-set, nullity zero, contranormal subgroup, essentially injective, Burnside ring, essentially surjective, group complement, Zappa-Sz{\'e}p product.}
\subjclass[2010]{Primary: 57M10, 05E18; Secondary: 18A22, 19A22}
\date{May 14, 2012}

\begin{abstract}
We give algebraic equivalents for certain desirable properties of pullback functors on categories of coverings and group sets,
namely nullity zero, essential injectivity, and essential surjectivity.
Nullity zero turns out to be equivalent to the notion of a contranormal subgroup.
We observe a Tannakian-like phenomenon with essential injectivity.
Essential surjectivity is intimately related to Zappa-Sz{\'e}p products.
We include several examples, and some open questions.
\end{abstract}

\maketitle

\section{Introduction}\label{s:intro}

Given a continuous function of spaces $f:X\to Y$, the \emph{topological pullback functor} \hbox{$\fa:\cov\pa{Y}\to\cov\pa{X}$} sends coverings of $Y$ to coverings of $X$.
Given an arbitrary group homomorphism $h:H\to G$, we define the \emph{algebraic pullback functor} $h^{\ast}:\gset{G}\to\gset{H}$.
In case $h$ is inclusion of finite groups, $h^{\ast}$ is the \emph{restriction functor} of Burnside ring theory.
Let $\fs$ be the induced homomorphism of fundamental groups.
Topological and algebraic pullback are intimately related by considering $\fs^{\ast}$, as displayed in diagram~\eqref{keydiagram} below.\\

In~\cite{quillen}, Quillen gave algebraic equivalents for $\fa$ to be faithful, full and faithful, and an equivalence of categories.
We call this \emph{Quillen's triad}.
Quillen took $f$ to be a map of posets; for his first two equivalences, the target was a point.
In~\cite{calcutmccarthy}, the first two authors generalize Quillen's triad to much more general topological spaces, without restricting the target to be a point, and to other categories of coverings.
In this paper, we extend Quillen's triad in other, more algebraic directions.
We work with reasonably nice spaces (see Section~\ref{s:cov} for our hypotheses), so classical covering space theory works, and give purely algebraic equivalents for $\fa$ and $h^{\ast}$ to have nullity zero, to be essentially injective, and to be essentially surjective.\\

We say a pullback functor has \emph{nullity zero} provided only the trivial objects pullback to trivial objects (see~Section~\ref{s:nullityzero}).
Following Rose~\cite{rose}, a subgroup $L$ of $G$ is \emph{contranormal} provided the normal closure of $L$ in $G$ equals $G$.
We prove that a pullback functor has nullity zero if and only if the image subgroup is contranormal.
Thus, the algebraic notion of a contranormal subgroup has a topological equivalent.
Namely, let $H$ be a subgroup of a group $G$.
Realize inclusion $H\hookrightarrow G$ as $\fs$ for nice spaces; this is always possible by Lemma~\ref{alg_modelled}.
Then, $H$ is contranormal in $G$ if and only if the topological pullback functor $\fa$ has nullity zero.
We give several examples of contranormal subgroups arising naturally both algebraically and topologically.
In the free group of rank two, we show that there appear to be vastly more contranormal subgroups than normal subgroups for each finite index $n>2$.
This raises the question: \emph{are contranormal subgroups more prevalent than normal subgroups for finite index $n>2$ and most groups in the sense of Gromov?} We hope to explore this question in future work.\\

We prove that a pullback functor is essentially injective if and only if the associated group homomorphism is surjective.
Our proof of the reverse implication is direct, whereas our proof of the forward implication utilizes infinite component covers and an infinite swindle.
Recall the \emph{Tannakian philosophy} from representation theory~\cite{joyal_street}: a group is determined by the category of its finite dimensional representations.
Thus, the Tannakian-like question arises: may failure of essential injectivity be detected using only \emph{finite} component covers of $Y$ (equivalently, $G$-sets having finitely many orbits)?
We answer this question in the affirmative for \emph{arbitrary} groups.
Our proof, in the finite index case, first reduces to the finite group case, which we then solve using Burnside rings.
We give two proofs for the finite case, the first using a lemma of Bouc (Lemma~\ref{bouc_lemma} below).
Our second proof (chronologically our first) identifies a distinguished, $1$-dimensional subspace of the kernel of the restriction functor when $H$ is a proper subgroup of $G$.
The existence of this distinguished subspace permits us to assign a natural number $\Delta(G,H)$ to each finite group and subgroup pair, which we call the \emph{deviation} of $H$ in $G$.
The deviation is an isomorphism invariant of the pair $(G,H)$ and, in fact, depends only on the $G$-conjugacy class of $H$ in $G$.
In case $H$ is normal in $G$, the deviation equals the index $\br{G:H}$.
In general, $\Delta(G,H)$ need not equal $\br{G:H}$, and the two may coincide even when $H$ is not normal in $G$.
We conjecture that $\Delta(G,H)=1$ if and only if $H=G$, that $\br{G:H}$ divides $\Delta(G,H)$, and that $\Delta(G,H)$ divides the order of $G$.
We present some evidence for these conjectures.\\

Understanding the kernel of a general morphism is sometimes equivalent to understanding injectivity.
Our results on nullity zero and essential injectivity show that this is decidedly not the case with pullback functors.
Namely, a pullback functor may have nullity zero while failing to be essentially injective (see examples in Section~\ref{s:nullityzero}).\\

We give an algebraic equivalent for a pullback functor to be essentially surjective.
Our equivalence imposes, for each subgroup $K$ of $H$, a constraint on the pair $(G,H)$ being essentially surjective.
Taking $K$ to be trivial yields the necessary, but generally not sufficient, condition: $G$ must split as a Zappa-Sz{\'e}p product of $H$ and a subgroup $L$ of $G$ (called a \emph{complement} of $H$ in $G$).
Zappa-Sz{\'e}p products generalize semidirect products.
We present a positive class of examples that are essentially surjective (they are special semidirect products).
We further show, by explicit example, that this class does not encompass all essentially surjective pairs.
We leave open the question of which subgroups of $H$ yield interesting constraints on essential surjectivity.\\

This paper is organized as follows.
Section~\ref{s:cov} recalls topological pullback and fixes some notation.
Section~\ref{s:Gset} defines algebraic pullback and proves some properties of algebraic and topological pullback.
Section~\ref{s:nullityzero} studies nullity zero,
Section~\ref{s:essinj} studies essential injectivity, and
Section~\ref{s:esssurj} studies essential surjectivity.\\

Throughout, $\N:=\cpa{1,2,3,\ldots}$ denotes the natural numbers.
$\card{S}$ denotes the cardinality of $S$.
Define $\omega:=\card{\N}$.
$K<L$ means that $K$ is a (not necessarily proper) subgroup of $L$.
A proper subgroup of $G$ is any subgroup $L\lneqq G$ ($L=\cpa{e}$ permitted in case $G\neq \cpa{e}$).
A functor $F:C\to D$ is \textbf{essentially injective} provided: if $F(x)\cong F(y)$, then $x\cong y$.
$F$ is \textbf{essentially surjective} provided: if $d$ is an object in $D$, then there exists an object $c$ in $C$ such that $F(c)\cong d$.

\section{Coverings and Pullback}\label{s:cov}

Fix a \textbf{map} (= continuous function) $f:X\to Y$ of topological spaces.
We assume $X$ and $Y$ are connected, locally path-connected, and semilocally simply-connected.
Spaces are not required to be Hausdorff.
Indeed, classical covering space theory `works' without any Hausdorff hypothesis~\cite[Ch.~1]{hatcher}.
Despite the fact that our main interest lies in the unbased category, it will be useful to base spaces.
So, fix some $x_0\in X$ and define $y_0:=f\pa{x_0}$.
Thus, we have the based map:
\[
\xymatrix@R=0pt{
	\bs{X}	\ar[r]^-{f}		&		\bs{Y}}
\]

Recall the category $\cov\pa{Y}$ of unbased coverings of $Y$.
An \textbf{object} of $\cov\pa{Y}$ is an unbased covering $p:E\to Y$ ($E$ may be disconnected or empty).
A \textbf{morphism} from $p_1:E_1\to Y$ to $p_2:E_2\to Y$ is a map $t:E_1\to E_2$ such that $p_1=p_2\circ t$.
Write $E_1\cong E_2$ to mean unbased isomorphism of coverings.\\

As $Y$ is locally path-connected, the restriction of any object $p:E\to Y$ to any union of components of $E$ is also an object of $\cov\pa{Y}$.
As $Y$ is locally path-connected and semilocally simply-connected, the disjoint union of any collection of objects of $\cov\pa{Y}$ is itself an object of $\cov\pa{Y}$. We refer the reader to~\cite{calcutmccarthy} for detailed proofs of basic properties of $\cov\pa{Y}$ and topological pullback.\\

We recall the \emph{topological pullback functor} on coverings:
\[
	\fa:\cov\pa{Y}\to\cov\pa{X}
\]
Let $p:E\to Y$ be an object of $\cov\pa{Y}$.
The \textbf{topological pullback} of $p$ along $f$ consists of the subspace:
\begin{equation}\label{pullback_set}
	\fa\pa{E} := \cpa{ \pa{x,e}\in X\times E \mid f(x)=p(e) }\subset X\times E
\end{equation}
and the commutative diagram:
\begin{equation}\label{pullback}\begin{split}
\xymatrix{
    \fa\pa{E}	\ar[r]^-{\wt{f}}	\ar[d]_{\fa(p)}	&	E	\ar[d]^{p}\\
    X  							\ar[r]^-{f}     						& Y }
\end{split}\end{equation}
Here, $\fa\pa{p}$ and $\wt{f}$ are restrictions of the coordinate projections, and $\fa\pa{p}$ is a covering map.
Note that:
\begin{equation}\label{pbfiber}
	\fa\pa{p}^{-1}\pa{x_0}=\cpa{x_0}\times p^{-1}\pa{y_0}
\end{equation}
and:
\[
\xymatrix@R=0pt{
	\fa\pa{p}^{-1}\pa{x_0}	\ar[r]^-{\wt{f}\,|}				&		p^{-1}\pa{y_0}	\\
	\pa{x_0,z}   		\ar@{|-{>}}[r]    &		z}
\]
is the canonical homeomorphism of fibers.
If $t$ is a morphism from $p_1:E_1\to Y$ to $p_2:E_2\to Y$, then:
\begin{equation}\label{pbmorphism}
	\fa\pa{t}:=\rest{\pa{\tn{id}_{X}\times t}}{\fa\pa{E_1}}
\end{equation}
is a morphism from $\fa\pa{p_1}$ to $\fa\pa{p_2}$.
Thus, \hbox{$\fa:\cov\pa{Y}\to\cov\pa{X}$} is a covariant functor.\\

Disjoint union is denoted $+$ or $\Sigma$.
Pullback respects disjoint union.
Namely, for each index set $S$ and objects $p_i:E_i\to Y$, $i\in S$, of $\cov\pa{Y}$:
\[
	\fa\pa{\sum_{i\in S} E_i}\cong \sum_{i\in S}\fa\pa{E_i}
\]
If $c=\card{S}$ and $p:E\to Y$ is an object of $\cov\pa{Y}$, then define $c\cdot E := \Sigma_{i\in S} E$.
Hence:
\[
\fa\pa{c\cdot E}	\cong	c\cdot\fa\pa{E}
\]

The based map $f$ induces the homomorphism of fundamental groups:
\[
\xymatrix@R=0pt{
	\p{X}	\ar[r]^-{\fs}		&		\p{Y}}
\]
Define:
\begin{align*}
	J&:=\p{X}\\
	G&:=\p{Y}\\
	H&:=\im{\fs} < G\\
	N&:=\ker{\fs} \triangleleft J
\end{align*}
If $L<G$, then we define:
\begin{equation}\label{prime_notation}
	L':=\fs^{-1}(L)<J
\end{equation}
Write $K\equiv_G L$ to mean that $K$ and $L$ are $G$-conjugate subgroups of $G$.
If $L<G$, then $\br{L}:=\cpa{K<G\mid K\equiv_G L}$ is the $G$-conjugacy class of $L$ in $G$.
Define:
\[
	SG:=\cpa{\br{L}\mid L<G}
\]
the set of $G$-conjugacy classes of subgroups of $G$.
Similarly, define $\equiv_J$, $\equiv_H$, and $SH$.
In cases where confusion may arise, we will write $\br{L}_H$ or $\br{L}_G$.\\

Let $\br{L}\in SG$. By the classification of covering spaces, there exists:
\begin{align*}
	Y_{\br{L}} 	&= \tn{the unbased, connected cover of $Y$ corresponding}\\
								&\hspace{14pt} \tn{to $\br{L}$ (unique up to unbased isomorphism)} 
\end{align*}
A \textbf{trivial cover} of $Y$ is any cover isomorphic to $c\cdot Y$ for some cardinal number $c$.
The following abbreviations will be used:
\begin{equation}\label{abbrev}
\begin{alignedat}{2}
	\wt{Y} 	& \ \  \tn{denotes} \ \ 	Y_{\br{\cpa{e}}} 		& \quad & \tn{(the connected, simply-connected cover of $Y$)}\\
	Y				& \ \ \tn{denotes} \ \	Y_{\br{G}} 								& \quad &\tn{(the $1$-sheeted, trivial cover of $Y$)}\\
	\wt{X} 	& \ \ \tn{denotes} \ \ 	X_{\br{\cpa{e}}} 		& \quad & \tn{(the connected, simply-connected cover of $X$)}\\
	X				& \ \ \tn{denotes} \ \	X_{\br{J}} 								& \quad &\tn{(the $1$-sheeted, trivial cover of $X$)}
\end{alignedat}
\end{equation}

Let $p:E\to Y$ be an object of $\cov\pa{Y}$. Then, $E$ is the disjoint union of its components, each of which is isomorphic to $Y_{\br{L}}$ for some $\br{L}\in SG$.
It follows that:
\[
	E\cong \sum_{\br{L}\in SG} c_{\br{L}}\cdot Y_{\br{L}}
\]
for some cardinal numbers $c_{\br{L}}$.
Observe that:
\[
	\sum_{\br{L}\in SG} c_{\br{L}}\cdot Y_{\br{L}} \cong \sum_{\br{L}\in SG} d_{\br{L}}\cdot Y_{\br{L}}
\]
if and only if $c_{\br{L}}=d_{\br{L}}$ for each $\br{L}\in SG$.\\

In the coming sections, we study the topological pullback functor $\fa$ via the intimately related \emph{algebraic pullback functor} associated to $\fs$.
While the discusion turns algebraic, it is helpful to recall that every homomorphism of groups arises as the induced homomorphism on fundamental groups for some decent spaces.

\begin{lemma}\label{alg_modelled}
Let $h:J_0\to G_0$ be an arbitrary homomorphism of groups (no restriction on $\card{J_0}$ or $\card{G_0}$).
Then, there exist connected $2$-dimensional CW-complexes $\bs{X}$ and $\bs{Y}$ and isomorphisms:
\begin{align*}
	\varphi & :J_0\to\p{X}=:J\\
	\psi & :G_0\to\p{Y}=:G
\end{align*}
Further, there exists a map $f:\bs{X}\to\bs{Y}$ such that the following diagram commutes:
\begin{equation}\label{alg_to_topology}\begin{split}
\xymatrix{
    J_0		\ar[r]^{h}	\ar[d]_-{\varphi}^-{\cong}	&	G_0	\ar[d]_-{\psi}^-{\cong}	\\
    J  	\ar[r]^-{\fs} 	& G}
\end{split}\end{equation}
\end{lemma}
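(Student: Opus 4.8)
The plan is to build $\bs{X}$ and $\bs{Y}$ directly as presentation $2$-complexes. First I would choose presentations $\langle S_G \mid R_G\rangle$ for $G_0$ and $\langle S_J \mid R_J\rangle$ for $J_0$, with no constraint on the cardinalities of the generating and relating sets (infinite presentations are fine, and the resulting complexes are still $2$-dimensional CW-complexes). Let $\bs{Y}$ be the presentation complex of $\langle S_G\mid R_G\rangle$: a single $0$-cell $y_0$, a $1$-cell for each $s\in S_G$, and a $2$-cell for each relator $r\in R_G$ attached along the loop spelled by $r$. This complex is connected, and by the standard Seifert--van Kampen computation $\p{Y}$ is canonically isomorphic to $G_0$; call this isomorphism $\psi$. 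Build $\bs{X}$ analogously from $\langle S_J\mid R_J\rangle$, obtaining $\varphi:J_0\to\p{X}$. Both complexes are visibly connected, locally path-connected, and semilocally simply-connected (any CW-complex is), so they meet the running hypotheses.

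The key step is producing the map $f$ realizing $h$. I would define $f$ cell-by-cell on $\bs{X}$. Send $x_0\mapsto y_0$. For each generator $s\in S_J$, the element $h(\varphi(\text{class of }s))\in G$ corresponds under $\psi^{-1}$ to some element $g_s\in G_0$, which is represented by a word $w_s$ in the generators $S_G$; map the $1$-cell $s$ to the edge-path in the $1$-skeleton of $Y$ spelled by $w_s$ (a continuous map of the $1$-skeleton of $X$ into the $1$-skeleton of $Y$). Now extend over the $2$-cells: a $2$-cell of $X$ is attached along a loop spelling a relator $r\in R_J$; under the map just defined on the $1$-skeleton, this loop goes to a loop in $Y$ spelling the word obtained from $r$ by substituting each $s$ by $w_s$. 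Because $r$ maps to the identity in $J_0$ and $h$ is a homomorphism, the image word represents the identity in $G_0$, hence the image loop is null-homotopic in $Y$; choose a null-homotopy and use it to extend $f$ across that $2$-cell. Doing this for every $2$-cell yields a continuous map $f:X\to Y$.

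It then remains to check that the induced map on $\pi_1$ makes diagram~\eqref{alg_to_topology} commute. By construction $\fs$ sends the class of the generating loop $s$ (i.e.\ $\varphi$ applied to the generator) to the class of the loop $w_s$, which is $\psi^{-1}\circ h\circ\varphi$ of that generator. Since these classes generate $\p{X}$ and both $\fs$ and $\psi^{-1}\circ h\circ\varphi$ are homomorphisms, the two maps agree on all of $\p{X}$, i.e.\ $\psi\circ\fs = h\circ\varphi$ after identifying via $\varphi,\psi$ — equivalently $\fs\circ\varphi = \psi^{-1}\circ h$ in the form drawn, so the square commutes. The isomorphisms $\varphi$ and $\psi$ are the Seifert--van Kampen identifications, which are genuine isomorphisms onto the fundamental groups, completing the claim.

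The main obstacle — really the only nontrivial point — is the extension of $f$ over the $2$-cells and the accompanying bookkeeping: one must be careful that the substitution homomorphism on free groups commutes with the quotient maps so that relators of $J_0$ land in the normal closure of the relators of $R_G$, guaranteeing the image loops bound. This is where the hypothesis that $h$ is a homomorphism is used essentially. Everything else (connectivity, local contractibility of CW-complexes, the van Kampen computation of $\pi_1$ of a presentation complex, and the functoriality of $\fs$) is standard and can be cited rather than reproved.
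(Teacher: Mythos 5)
Your proof is correct, and it follows the same broad strategy as the paper (presentation $2$-complexes plus van Kampen), but the execution differs at the one point where the paper's proof is slick. The paper uses the \emph{multiplication table presentations} of $J_0$ and $G_0$: one generator per group element and one relator $x_a x_b x_{ab}^{-1}$ per pair of elements. With that choice, $h$ itself dictates a cellular map $f$ sending the $1$-cell of $j$ to the $1$-cell of $h(j)$, and since $h$ is a homomorphism each relator $x_a x_b x_{ab}^{-1}$ of $J_0$ is carried verbatim to the relator $x_{h(a)}x_{h(b)}x_{h(a)h(b)}^{-1}$ of $G_0$, so $f$ extends over $2$-cells by sending $2$-cells to $2$-cells---no choice of representing words or null-homotopies is needed. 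You instead allow arbitrary presentations, which forces you to choose words $w_s$ and then null-homotopies for the images of the attaching loops; this is the standard textbook argument and is perfectly valid (and arguably more flexible, since it realizes $h$ over any prescribed presentation complexes). Your key step---that the substituted relator word represents $e$ in $G_0$, hence the image loop bounds a disc in $Y$---is exactly right and is where the homomorphism hypothesis enters, just as you say. One cosmetic caution: in several places you compose maps in directions that do not typecheck, e.g.\ ``$h(\varphi(\text{class of }s))$'' and ``$\psi^{-1}\circ h$'' (recall $\varphi:J_0\to J$ and $\psi:G_0\to G$); the identity you are actually verifying on generators is $\fs\circ\varphi=\psi\circ h$, which is the commutativity of the square as drawn. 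This is a notational slip, not a gap.
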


\begin{proof}
Consider the multiplication table presentations $\fg{J_0\mid R}$ and $\fg{G_0\mid S}$ of $J_0$ and $G_0$ (see~\hbox{\cite[pp.~7--8]{mks}}).
Let $\bs{X}$ and $\bs{Y}$ be the standard CW-complexes of dimension $2$ associated to $\fg{J_0\mid R}$ and $\fg{G_0\mid S}$ respectively (see \cite[p.~52]{hatcher}).
The construction of these complexes yields the isomorphisms $\varphi$ and $\psi$ (coherent orientation of loops is required).
The obvious function $f:\bs{X}\to\bs{Y}$ is easily seen to be a map as desired.
\end{proof}

\begin{remark}\label{injective_cw_realization}
If $h$ is injective (which will turn out to be the most important case), then an alternative approach to Lemma~\ref{alg_modelled} is as follows.
Begin with any presentation $P$ of $G_0$,
construct the standard $2$-dimensional CW-complex $\bs{Y}$ associated to $P$,
then use covering space theory to get $\bs{X}$ as an appropriate, connected cover of $\bs{Y}$.
The resulting map $f:\bs{X}\to\bs{Y}$ is itself a covering map, so $\bs{X}$ is a $2$-dimensional CW-complex.
\end{remark}

\begin{remark}\label{injective_fp_4mfld_realization}
If $h$ is injective and $G_0$ is finitely presented, then one may use \hbox{$4$-manifolds} in place of CW-complexes.
As is well known~\cite[pp.~131,155]{gompf_stipsicz}, surgery yields a connected, smooth, closed (= compact, no boundary) $4$-manifold
$\bs{Y}$ with $\p{Y}\cong G_0$.
Use covering space theory to get $\bs{X}$ as an appropriate, connected (possibly noncompact) cover of $\bs{Y}$.
The resulting map \hbox{$f:\bs{X}\to\bs{Y}$} is itself a covering map, so $\bs{X}$ is a smooth $4$-manifold.
\end{remark}

\section{G-sets and Pullback}\label{s:Gset}

Let $\gset{G}$ denote the category of (not necessarily finite or nonempty) right \hbox{$G$-sets}.
A \textbf{morphism} of $G$-sets $S_1$ and $S_2$ is a $G$-equivariant function $t:S_1\to S_2$ (i.e., $t\pa{s\cdot g}=t\pa{s}\cdot g$).
The categories $\cov\pa{Y}$ and $\gset{G}$ are equivalent by the functor (``$F$'' for \emph{fiber}):
\begin{equation}\label{Fequiv}\begin{split}
\xymatrix@R=0pt{
      \cov\pa{Y}     \ar[r]^-{F} 						&  \gset{G}  \\
    		p  									\ar@{|-{>}}[r]    &  p^{-1}\pa{y_0}}
\end{split}\end{equation}
Here, $p:E\to Y$, and $G$ acts on the fiber by the \textbf{monodromy action}:
\[
	z\cdot g:=\wt{\gamma}\pa{1}
\]
where $\gamma:([0,1],\cpa{0,1})\to \bs{Y}$ is such that $g=\br{\gamma}$, and $\wt{\gamma}$ is the lift of $\gamma$ to $E$ such that $\wt{\gamma}\pa{0}=z$.
If $t:p_1\to p_2$ is a morphism, then $F\pa{t}$ is, by definition, the restriction $\rest{t}:p_1^{-1}\pa{y_0}\to p_2^{-1}\pa{y_0}$.

\begin{remark}\label{remequiv}
One may construct a weak inverse for $F$ by sending a (discrete) $G$-set $S$ to $(\wt{Y}\times S)\s G$ for a suitable action of $G$ on $\wt{Y}\times S$.
This construction involves choices, and there is no canonical weak inverse for $F$ without additional data.
For our purposes, it is more useful to recall the theorem that a functor $F$ is an equivalence if and only if $F$ is full, faithful, and essentially surjective~\hbox{\cite[p.~93]{maclane}}.
\end{remark}

Let $S$ be a right $G$-set.
If $s\in S$, then $_{s}G<G$ denotes the \textbf{stabilizer} of $s$ and $sG\subset S$ denotes the \textbf{orbit} of $s$.
If $a\cdot g =b$, then $_{b}G = g^{-1} \pa{_{a}G}g$.
The \textbf{orbit space} is $S\s G:=\cpa{sG \mid s\in S}$.
A \textbf{transversal} $T$ for $S\s G$ is a set containing exactly one element from each orbit.\\

If $L<G$, then the set of right cosets $L\b G$ is a transitive $G$-set where $G$ acts by right translation.
If $Lg\in L\b G$, then $_{Lg}G=g^{-1}Lg$.
Given subgroups $L$ and $K$ of $G$, $L\b G\cong K\b G$ (as $G$-sets) if and only if $L\equiv_{G} K$.
Each transitive right $G$-set $S$ is (noncanonically) isomorphic to $L\b G$ for some $L<G$.
Namely, if $s\in S$, then an isomorphism is:
\begin{equation}\label{stabcosettrans}\begin{split}
\xymatrix@R=0pt{
	_{s}G\b G	\ar[r]			&		S	\\
	_{s}G g 		\ar@{|-{>}}[r]    &		s\cdot g}
\end{split}\end{equation}

As in~\eqref{Fequiv}, the functor:
\[
\xymatrix@R=0pt{
      \cov\pa{X}     \ar[r]^-{F} 						&  \gset{J}  \\
    		q  									\ar@{|-{>}}[r]    &  q^{-1}\pa{x_0}}
\]
is an equivalence, where $J$ acts on $q^{-1}\pa{x_0}$ by the $J$-monodromy action.
Consider the diagram of functors:
\begin{equation}\label{keysquare}\begin{split}
\xymatrix{
    \cov\pa{Y}	\ar[r]^-{F}	\ar[d]_-{\fa}	&	\gset{G}	\ar@{--{>}}[d]^{\varepsilon}	\\
    \cov\pa{X} 	\ar[r]^-{F} 								&	\gset{J} }
\end{split}\end{equation}
Commutativity of the pullback square~\eqref{pullback} implies that the $J$-monodromy action on $\fa\pa{p}^{-1}\pa{x_0}$ and the $G$-monodromy action on $p^{-1}\pa{y_0}$ satisfy:
\begin{equation}\label{JGactions}
	\pa{x_0,z}\cdot j	=	\pa{x_0,z\cdot\fs\pa{j}}
\end{equation}
Thus, there is a canonical functor $\varepsilon$ that makes~\eqref{keysquare} commute.
Namely, define $\varepsilon$ on objects by $\varepsilon\pa{S}:=\cpa{x_0}\times S$ where $\pa{x_0,s}\cdot j := \pa{x_0,s\cdot \fs(j)}$, and on morphisms by $\varepsilon(t):=\tn{id}\times t$.
Recalling~\eqref{pbfiber}, \eqref{pbmorphism}, and~\eqref{JGactions}, it is straightforward to verify that~\eqref{keysquare}, with $\varepsilon$ included, is a commutative diagram of functors.\\

A second functor $\gset{G}\to\gset{J}$, closely related to $\varepsilon$ but even more canonical, is what we call the \emph{algebraic pullback functor} associated to $\fs:J\to G$.
We define it now for a general homomorphism.\\

Let $h:G_1\to G_2$ be a homomorphism of groups.
The \textbf{algebraic pullback functor} is:
\[
	h^{\ast}:\gset{G_2}\to\gset{G_1}
\]
defined on objects by $h^{\ast}\pa{S}:=S$ where $s\cdot g_1 :=s\cdot h\pa{g_1}$, and defined on morphisms by $h^{\ast}(t):=t$.
Evidently, algebraic pullback respects disjoint union.

\begin{remark}
If $h$ is inclusion, then $h^{\ast}$ is restriction of the $G_2$ action to $G_1$.
Further, if $G_2$ is finite, then $h^{\ast}$ is typically denoted $\res$ or $\res_{G_1}^{G_2}$ in the literature.
We use the $\res$ notation in Sections~\ref{ss:finitegroups} and~\ref{ss:finite2} ahead with finite groups.
\end{remark}

\begin{lemma}[Basic Properties of Algebraic Pullback]\label{basicpropsalgpb}
Let $h:G_1\to G_2$ be a homomorphism of groups.
Let $I:=\im{h}< G_2$.
Let $S$ be a $G_2$-set and let $s\in S$. Then:
\begin{enumerate}\setcounter{enumi}{\value{equation}}
\item\label{stabilizersgrouppb} The stabilizers satisfy $_{s}G_1 = h^{-1}\pa{_{s}G_2}$.
\item\label{hisoequiv} If $h$ is an isomorphism, then $h^{\ast}$ and $\pa{h^{-1}}^{\ast}$ are inverse functors and, hence, are equivalences.
\item\label{surjorbitspacebij} If $h$ is surjective, then the following is a bijection of orbit spaces:
\begin{equation*}\begin{split}
\xymatrix@R=0pt{
	S\s G_2	\ar[r]			&		S\s G_1\\
	s G_2		\ar@{|-{>}}[r]    &		s G_1}
\end{split}\end{equation*}
\item\label{hsurjifftranstotrans} $h$ is surjective if and only if $h^{\ast}$ sends each transitive $G_2$-set to a transitive $G_1$-set.
\item\label{hast} If $h$ is surjective and $L<G_2$, then $h^{\ast}\pa{L\b G_2} \cong h^{-1}\pa{L}\b G_1$.
\item\label{pbtransG2set} If $L<G_2$, then $\pa{L\b G_2}\s G_1 = L\b G_2\s I$.
Furthermore:
\[
	h^{\ast}\pa{L\b G_2} \cong \sum_{\substack{LgI\in\\ L\b G_2\s I}}  h^{-1}\pa{g^{-1}Lg}\b G_1
\]
\setcounter{equation}{\value{enumi}}
\end{enumerate}
\end{lemma}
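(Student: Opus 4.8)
The plan is to verify the six items directly from the definition of $h^{\ast}$, using two facts already recorded above: that every transitive right $G$-set is isomorphic to ${}_{s}G\b G$ via the canonical map~\eqref{stabcosettrans}, and that the stabilizer of a coset satisfies ${}_{Lg}G=g^{-1}Lg$. The only genuinely new computation is item~\ref{stabilizersgrouppb}, to which every assertion about stabilizers below reduces: since $s\cdot g_1$ in $h^{\ast}(S)$ equals $s\cdot h(g_1)$ in $S$, we have $g_1\in{}_{s}G_1\iff s\cdot h(g_1)=s\iff h(g_1)\in{}_{s}G_2\iff g_1\in h^{-1}\pa{{}_{s}G_2}$. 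Item~\ref{hisoequiv} is then formal: the composites $\pa{h^{-1}}^{\ast}\circ h^{\ast}$ and $h^{\ast}\circ\pa{h^{-1}}^{\ast}$ are the identity on underlying sets and on functions, and the twice-transported action sends $s\cdot g$ to $s\cdot h\pa{h^{-1}(g)}=s\cdot g$, the original action; hence $h^{\ast}$ and $\pa{h^{-1}}^{\ast}$ are mutually inverse functors, in particular equivalences.

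For items~\ref{surjorbitspacebij}--\ref{hast} assume $h$ is surjective. The key observation is that for each $s\in S$ the $G_1$-orbit and the $G_2$-orbit of $s$ coincide as subsets of $S$: $sG_1=\cpa{s\cdot h(g_1)\mid g_1\in G_1}=\cpa{s\cdot g_2\mid g_2\in\im{h}=G_2}=sG_2$. Thus $S\s G_1=S\s G_2$ and the map of item~\ref{surjorbitspacebij} is literally the identity, which gives~\ref{surjorbitspacebij}. For~\ref{hsurjifftranstotrans}: if $h$ is surjective then any transitive $G_2$-set $S=sG_2=sG_1$ is $G_1$-transitive; conversely, if $I\lneqq G_2$ then $S:=I\b G_2$ is a transitive $G_2$-set, but the $G_1$-orbit of the coset $I=Ie$ is $\cpa{I\cdot h(g_1)\mid g_1\in G_1}=\cpa{Ii\mid i\in I}=\cpa{I}$, a singleton, while $\card{S}=\br{G_2:I}>1$, so $h^{\ast}(S)$ is not transitive. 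For~\ref{hast}: by~\ref{hsurjifftranstotrans} the $G_1$-set $h^{\ast}\pa{L\b G_2}$ is transitive; taking $s$ to be the coset $L=Le$, whose $G_2$-stabilizer is $L$, item~\ref{stabilizersgrouppb} gives ${}_{s}G_1=h^{-1}(L)$, and~\eqref{stabcosettrans} yields $h^{\ast}\pa{L\b G_2}\cong h^{-1}(L)\b G_1$.

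For item~\ref{pbtransG2set} (no surjectivity hypothesis), I would first compute the $G_1$-orbits on $L\b G_2$: the $G_1$-orbit of $Lx$ is $\cpa{Lx\cdot h(g_1)\mid g_1\in G_1}=\cpa{Lxi\mid i\in I}$, the collection of right cosets making up the double coset $LxI$; this yields the equality $\pa{L\b G_2}\s G_1=L\b G_2\s I$. Since $h^{\ast}$ respects disjoint union, $h^{\ast}\pa{L\b G_2}$ is the disjoint union of these orbits; each is a transitive $G_1$-set, and choosing $Lg$ (whose $G_2$-stabilizer is $g^{-1}Lg$) as representative of the orbit indexed by $LgI$, items~\ref{stabilizersgrouppb} and~\eqref{stabcosettrans} identify that orbit with $h^{-1}\pa{g^{-1}Lg}\b G_1$, giving the displayed decomposition.

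The main bookkeeping obstacle is checking that the summand in~\ref{pbtransG2set} does not depend on the chosen double-coset representative. If $g'=lgi$ with $l\in L$ and $i=h(g_1)\in I$, then ${g'}^{-1}Lg'=h(g_1)^{-1}\pa{g^{-1}Lg}h(g_1)$, and the identity $h^{-1}\pa{h(g_1)^{-1}Mh(g_1)}=g_1^{-1}h^{-1}(M)g_1$ — valid for any $M<G_2$ by the same chain of equivalences used for item~\ref{stabilizersgrouppb} — shows the stabilizer changes only by a $G_1$-conjugacy, hence the summand is unaltered up to the isomorphism~``$\cong$'' appearing in the statement. Apart from this, I anticipate no real difficulties: the whole lemma is a careful unwinding of definitions together with the standard model~\eqref{stabcosettrans} of transitive $G$-sets.
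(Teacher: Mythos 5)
Your proposal is correct and follows essentially the same route as the paper: items~\ref{stabilizersgrouppb}--\ref{surjorbitspacebij} by direct unwinding of the definition of $h^{\ast}$, the backward direction of~\ref{hsurjifftranstotrans} by examining the $G_1$-orbit of the identity coset in $I\b G_2$ (you phrase it as a contrapositive, the paper argues directly, but the computation is identical), and~\ref{pbtransG2set} by identifying the $G_1$-orbits on $L\b G_2$ with the double cosets $LgI$ and computing stabilizers via~\ref{stabilizersgrouppb} and~\eqref{stabcosettrans}. Your added check that the summand in~\ref{pbtransG2set} is independent of the double-coset representative is a welcome detail the paper leaves implicit.
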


\begin{proof}
Items~\ref{stabilizersgrouppb}--\ref{surjorbitspacebij} are exercises.
In~\ref{hsurjifftranstotrans}, the forward direction is immediate by~\ref{surjorbitspacebij}.
For the backward direction, let $L:=\im{h}<G_2$. By hypothesis, $h^{\ast}\pa{L\b G_2}$ is a transitive $G_1$-set.
Let $g_2\in G_2$. Then, there exists $g_1\in G_1$ such that $\pa{Le}\cdot g_1 = Lg_2$.
Hence, $L=Lg_2$ and $g_2\in L$ as desired.
Item~\ref{hast} follows from~\ref{surjorbitspacebij}, \eqref{stabcosettrans}, and~\ref{stabilizersgrouppb}.
For~\ref{pbtransG2set}, note that $\pa{L\b G_2}\s G_1$ is the orbit space for the right $G_1$ action on $h^{\ast}\pa{L\b G_2}$,
and $L\b G_2\s I$ is the set of double cosets of $L$ and $I$ in $G_2$.
It is straightforward to see the two are equal.
Finally, $h^{\ast}\pa{L\b G_2}$ is a disjoint union of transitive $G_1$-sets, namely the individual orbits in $\pa{L\b G_2}\s G_1$.
Let $LgI$ be such an orbit.
By~\eqref{stabcosettrans}, $LgI\cong {_{Lg}G_1}\b G_1$.
By~\ref{stabilizersgrouppb}, $_{Lg}G_1=h^{-1}\pa{_{Lg}G_2}=h^{-1}\pa{g^{-1}Lg}$ as desired.
\end{proof}

\begin{remark}
As an application of Lemma~\ref{basicpropsalgpb}, recall diagram~\eqref{alg_to_topology}. Algebraic pullback yields the commutative diagram of functors:
\begin{equation}\label{alg_to_topology_pb}\begin{split}
\xymatrix{
    \gset{J_0}	&	\ar[l]_(.45){h^{\ast}}		\gset{G_0}		\\
    \gset{J}  	\ar[u]^-{\varphi^{\ast}}	& \ar[l]_(.45){\fs^{\ast}}	\ar[u]_-{\psi^{\ast}}	\gset{G}}
\end{split}\end{equation}
where the vertical functors are equivalences by~\ref{hisoequiv}.
Thus, $h^{\ast}$ and $\fs^{\ast}$ behave identically concerning essential injectivity, essential surjectivity, and nullity zero (defined in Section~\ref{s:nullityzero}).
\end{remark}

\begin{corollary}[Further Properties of Algebraic Pullback]\label{corap}
Let $h:G_1\to G_2$ be a homomorphism of groups.
Let $I:=\im{h}< G_2$.
The following are consequences of~\ref{pbtransG2set}:
\begin{enumerate}\setcounter{enumi}{\value{equation}}
\item\label{hstartG2G2} $h^{\ast}\pa{G_2\b G_2}\cong G_1\b G_1$
\item If $L\triangleleft G_2$, then $h^{\ast}\pa{L\b G_2}\cong c\cdot \pa{h^{-1}\pa{L}\b G_1}$ where $c=\card{L\b G_2\s I}\geq 1$.
\item\label{apbeG2} $h^{\ast}\pa{\cpa{e}\b G_2} \cong \br{G_2:I}\cdot \pa{\ker{h}\b G_1}$.
\item If $h$ is injective, then $h^{\ast}\pa{\cpa{e}\b G_2} \cong \br{G_2:I}\cdot \pa{\cpa{e}\b G_1}$.
\item\label{hstarI} If $I\triangleleft G_2$, then $I\b G_2\s I=I\b G_2$ and $h^{\ast}\pa{I\b G_2}\cong \br{G_2:I}\cdot \pa{G_1\b G_1}$.
\item\label{hstarIG2} $h^{\ast}\pa{I\b G_2}\cong c\cdot\pa{G_1\b G_1} +E$ where $c\geq 1$ and $E$ contains no orbit isomorphic to $G_1\b G_1$ ($E$ may be empty).
\setcounter{equation}{\value{enumi}}
\end{enumerate}
\qed
\end{corollary}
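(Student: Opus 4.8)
The plan is to read off every item from the coproduct decomposition of~\ref{pbtransG2set} by feeding in a judicious choice of subgroup $L<G_2$. Two elementary facts are used throughout. First, $h^{-1}(I)=G_1$ and $h^{-1}(\cpa{e})=\ker h$, directly from $I=\im h$. Second, recorded in the paragraph before~\eqref{stabcosettrans}, the coset $G_1$-sets $K\b G_1$ and $K'\b G_1$ are isomorphic exactly when $K\equiv_{G_1}K'$; in particular $K\b G_1\cong G_1\b G_1$ if and only if $K=G_1$, since $G_1$ is the only subgroup of $G_1$ conjugate to itself. All indices and double-coset counts below are cardinal numbers, and $c\cdot(-)$ denotes the cardinal-indexed coproduct from Section~\ref{s:cov}.

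Now specialize~\ref{pbtransG2set}. Taking $L=G_2$ collapses $G_2\b G_2\s I$ to the single double coset $G_2eI$, whose summand is $h^{-1}(G_2)\b G_1=G_1\b G_1$; this is~\ref{hstartG2G2}. Taking $L\triangleleft G_2$ forces $g^{-1}Lg=L$ for every $g\in G_2$, so every summand equals $h^{-1}(L)\b G_1$ and the number of summands is $c:=\card{L\b G_2\s I}\geq 1$ (the double cosets partition the nonempty set $G_2$); this gives the item for normal $L$. The case $L=\cpa{e}$ then gives~\ref{apbeG2} once one notes $\cpa{e}gI=gI$, so $c=[G_2:I]$, together with $h^{-1}(\cpa{e})=\ker h$; specializing further to injective $h$ (so that $\ker h=\cpa{e}$) gives the following item. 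For~\ref{hstarI}, take $L=I$ with $I\triangleleft G_2$: then $IgI=Ig$, so $I\b G_2\s I=I\b G_2$, of cardinality $[G_2:I]$, and each summand is $h^{-1}(g^{-1}Ig)\b G_1=h^{-1}(I)\b G_1=G_1\b G_1$. Finally, for~\ref{hstarIG2}, take $L=I$ with no normality hypothesis and split the summands of~\ref{pbtransG2set}: let $c$ be the number of double cosets $IgI$ with $h^{-1}(g^{-1}Ig)=G_1$, and let $E$ be the coproduct of the remaining summands. The double coset $IeI$ belongs to the first group since $h^{-1}(I)=G_1$, so $c\geq 1$; and each remaining summand has the form $K\b G_1$ with $K\lneqq G_1$, hence is not isomorphic to $G_1\b G_1$ by the second fact above.

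No step presents a genuine obstacle — each item is an immediate reading of~\ref{pbtransG2set}. The one place meriting care is~\ref{hstarIG2}: one must be certain of having extracted \emph{every} summand isomorphic to $G_1\b G_1$, which is exactly where the conjugacy criterion for isomorphism of coset spaces does its work, so that the residual term $E$ provably omits that orbit type. It also bears noting that the multiplicities $c$ and $[G_2:I]$ above may be infinite, so they should be understood as cardinals; with that convention the stated isomorphisms hold verbatim even when $G_2$ is infinite.
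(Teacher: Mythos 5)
Your proposal is correct and matches the paper's intent exactly: the paper offers no written proof beyond the \qed, treating each item as an immediate specialization of the double-coset decomposition in~\ref{pbtransG2set}, which is precisely what you carry out (with appropriate care about cardinal-valued multiplicities and the conjugacy criterion for $K\b G_1\cong G_1\b G_1$ in~\ref{hstarIG2}).
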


So, $\fs:J\to G$ yields the algebraic pullback functor $\fs^{\ast}:\gset{G}\to\gset{J}$.
By diagram~\eqref{keysquare}, we have the diagram of functors:
\begin{equation}\label{keysquare2}\begin{split}
\xymatrix{
    \cov\pa{Y}	\ar[r]^-{F}	\ar[d]_-{\fa}	&	\gset{G}	\ar@{}[d]|{\Leftrightarrow}	\ar@/^/[d]^{\fs^{\ast}}	\ar@/_/[d]_{\varepsilon}	\\
    \cov\pa{X} 	\ar[r]^-{F} 								&	\gset{J} 		}
\end{split}\end{equation}
While the functors $\varepsilon$ and $\fs^{\ast}$ are not equal, they are naturally isomorphic
(as indicated by the $\Leftrightarrow$ in diagram~\eqref{keysquare2}).
Namely, $\rho:\varepsilon \Rightarrow \fs^{\ast}$ defined by $\rho\pa{S}:=\pa{\pa{x_0,s}\mapsto s}$,
and $\nu:\fs^{\ast} \Rightarrow \varepsilon$ defined by $\nu\pa{S}:=\pa{s\mapsto\pa{x_0,s}}$,
are natural isomorphisms.
In particular, $\rho\pa{S}$ and $\nu\pa{S}$ are isomorphisms of $J$-sets for each object $S$ in $\gset{G}$.\\

The homomorphism $\fs:J\to G$ factors uniquely as a surjection followed by an inclusion:
\begin{equation}\label{factorfs}\begin{split}
\xymatrix@C=1.4pc{
    J	\ar[rr]^-{\fs}	\ar@{->>}[dr]_-{\lambda}	&		&	G	\\
    &	H \ar@{^{(}->}[ur]_-{\iota} }
\end{split}\end{equation}
which yields the commutative diagram of algebraic pullback functors:
\begin{equation}\label{keytriangle}\begin{split}
\xymatrix@C=0pc{
    \gset{J}	&		&	\gset{G}	\ar[ll]_-{\fs^{\ast}}	\ar[dl]^-{\iota^{\ast}}\\
    &	\gset{H} \ar[ul]^-{\lambda^{\ast}} }
\end{split}\end{equation}

Diagrams~\eqref{keysquare2} and~\eqref{keytriangle} yield the key diagram of functors:
\begin{equation}\label{keydiagram}\begin{split}
\xymatrix{
    \cov\pa{Y}	\ar[r]^-{F}	\ar[d]_-{\fa}	&	\gset{G}	\ar@{}[d]|{\Leftrightarrow} \ar[dr]^{\iota^{\ast}}	\ar@/^/[d]^{\fs^{\ast}}	\ar@/_/[d]_{\varepsilon}	\\
    \cov\pa{X} 	\ar[r]^-{F} 								&	\gset{J} 		& \gset{H} \ar[l]_{\lambda^{\ast}}}
\end{split}\end{equation}
In~\eqref{keydiagram}, the left square and the right triangle each commute, both functors labelled $F$ are equivalences,
and the functors $\varepsilon$ and $\fs^{\ast}$ are naturally isomorphic (see~\eqref{keysquare2}).\\

We now prove the analogues of~\ref{pbtransG2set} and Corollary~\ref{corap} for the topological pullback functor $\fa$.

\begin{lemma}\label{covsofY}
Let $p:E\to Y$ be an object of $\cov\pa{Y}$.
Then, the following is a bijection:
\begin{equation}\label{pi0bij}\begin{split}
\xymatrix@R=0pt{
	\pi_{0}\pa{E}	\ar[r]		&		p^{-1}\pa{y_0}\s G	\\
	C  		\ar@{|-{>}}[r]    &	 F(C)}
\end{split}
\end{equation}
If $C\in\pi_{0}\pa{E}$ and $z\in F(C)$, then $C\cong Y_{\br{ _{z}G}}$ and $F(C)\cong {_{z}}G\b G$.
In particular, $F\pa{Y_{\br{L}}}\cong L\b G$ for each $L<G$.
If $T$ is a transversal for $p^{-1}\pa{y_0}\s G$, then:
\begin{equation}\label{Eisotype}
	E \cong \sum_{z\in T} Y_{\br{_{z}G}}
\end{equation}
The analogous results hold for an object $q:E\to X$ of $\cov\pa{X}$ with $Y$, $G$, and $y_0$ replaced by $X$, $J$, and $x_0$ respectively.
\end{lemma}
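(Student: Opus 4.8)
The plan is to exploit the equivalence of categories $F\colon\cov\pa{Y}\to\gset{G}$ from~\eqref{Fequiv} together with the structure theory of $G$-sets already developed. First I would recall that $F$ is an equivalence, so it is full, faithful, and essentially surjective, and in particular it induces a bijection on isomorphism classes of objects; moreover, $F$ carries disjoint unions to disjoint unions and connected objects to transitive $G$-sets (since a covering of a locally path-connected space has connected total space iff the monodromy action on a fiber is transitive). The bijection~\eqref{pi0bij} then follows: the components $C$ of $E$ correspond under $F$ to the orbits of $p^{-1}\pa{y_0}$ under $G$, because $F$ restricted to a single component $C$ of $E$ is (up to canonical identifications) the fiber $F(C)=C\cap p^{-1}\pa{y_0}=p^{-1}\pa{y_0}\cap C$, and these fibers of the components partition $p^{-1}\pa{y_0}$ into exactly the $G$-orbits. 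One should check injectivity and surjectivity of $C\mapsto F(C)$ directly: distinct components have disjoint (hence distinct) fibers, and each orbit $zG$ is the fiber of the (unique) component containing the point of $E$ sitting over $z$ — here one uses path-lifting and local path-connectedness of $Y$ to see that points of $E$ in the same orbit lie in the same component, and conversely that a component's fiber is a single orbit.

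Next I would identify the isomorphism type of a component. Fix $C\in\pi_0(E)$ and $z\in F(C)$. Since $C$ is connected, $p|_C\colon C\to Y$ is a connected covering, and by the classification of covering spaces it is isomorphic to $Y_{\br{L}}$ where $\br{L}\in SG$ is the $G$-conjugacy class determined by the stabilizer; concretely the stabilizer of $z$ under the monodromy action is $_{z}G$, so $C\cong Y_{\br{_{z}G}}$. Correspondingly $F(C)$ is a transitive $G$-set with a point whose stabilizer is $_{z}G$, so by~\eqref{stabcosettrans} we get $F(C)\cong {_{z}G}\b G$. Taking $E=Y_{\br{L}}$ itself (which has a single component with fiber a transitive $G$-set whose point-stabilizers form the class $\br{L}$) yields $F\pa{Y_{\br{L}}}\cong L\b G$ for each $L<G$. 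This also re-proves consistency: the conjugacy class $\br{_{z}G}$ is independent of the choice of $z\in F(C)$ by the relation $_{b}G=g^{-1}\pa{_{a}G}g$ recorded earlier.

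Finally, formula~\eqref{Eisotype} is assembled from the first two parts: if $T$ is a transversal for $p^{-1}\pa{y_0}\s G$, then by~\eqref{pi0bij} the elements $z\in T$ index precisely the components of $E$, up to isomorphism each such component is $Y_{\br{_{z}G}}$, and since $E$ is the disjoint union of its components and $\fa$ (here rather $F^{-1}$) respects disjoint unions we obtain $E\cong\sum_{z\in T}Y_{\br{_{z}G}}$. The statement for an object $q\colon E\to X$ is the same argument verbatim with $(Y,G,y_0)$ replaced by $(X,J,x_0)$, since the fiber functor over $X$ is likewise an equivalence.

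I expect the only real subtlety — hardly an obstacle, but the step requiring the hypotheses on the spaces — to be the claim that the fibers of the connected components of $E$ are exactly the $G$-orbits of $p^{-1}\pa{y_0}$; this is where local path-connectedness of $Y$ (so that components of $E$ are open and path components) and the definition of the monodromy action via path lifting are used to move points around within a component and to see that one cannot leave a component. Everything else is bookkeeping with the already-established equivalence $F$, the classification of connected covers, and the coset description~\eqref{stabcosettrans}.
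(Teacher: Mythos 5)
Your proposal is correct and follows essentially the same route as the paper: identify $F(C)=p^{-1}(y_0)\cap C$ as the orbit $zG$ of any point $z$ in it (giving the bijection~\eqref{pi0bij}), note that the monodromy stabilizer of $z$ is $p_\sharp(\pi_1(C,z))={_z}G$ so $C\cong Y_{\br{_zG}}$ and $F(C)\cong{_z}G\b G$ via~\eqref{stabcosettrans}, and assemble~\eqref{Eisotype} from the component decomposition. The paper's proof is just a terser version of the same argument, and the final assertion for $X$ is likewise handled verbatim.
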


\begin{proof}
Let $C\in\pi_{0}\pa{E}$ and let $z\in p^{-1}\pa{y_0}\cap C$.
Then:
\[
	F(C)=p^{-1}\pa{y_0}\cap C = zG
\]
is a transitive right $G$-set, yielding~\eqref{pi0bij}.
As	$p_{\sharp}\pa{\pi_1\pa{C,z}}= {_{z}}G$, we get $C\cong Y_{\br{ _{z}G}}$.
Using the point $z\in zG$, we get the isomorphism (see~\eqref{stabcosettrans}):
\begin{equation}\label{stab_C}\begin{split}
\xymatrix@R=0pt{
	_{z}G\b G	\ar[r]		&		zG	\\
	_{z}G g 		\ar@{|-{>}}[r]    &		z\cdot g}
\end{split}\end{equation}
The next two assertions follow from the first three.
The last assertion holds by the same proof.
\end{proof}

The next lemma is the analogue of~\ref{pbtransG2set} for $\fa$.

\begin{lemma}\label{keylemma}
Let $\br{L}\in SG$ and let $p:Y_{\br{L}}\to Y$ be the corresponding unbased, connected cover.
Let $L\in\br{L}$ be any representative subgroup.
Then, there exists a bijection:
\begin{equation}\label{compspb}
\xymatrix@R=0pt{
		L\b G\s H		\ar[r]^-{\Gamma}	&		\pi_0\pa{\fa\pa{Y_{\br{L}}}}	}
\end{equation}
and:
\begin{equation}\label{isotypecomp}
	\Gamma\pa{LgH}\cong X_{\br{\pa{g^{-1}Lg\, \cap H}'}}
\end{equation}
\end{lemma}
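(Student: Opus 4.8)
The plan is to transport the question to the category of $J$-sets, where~\ref{pbtransG2set} has already done the combinatorial bookkeeping, and then read the answer back through the fiber equivalence $F$ and Lemma~\ref{covsofY}.

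First I would apply $F$ to the object $\fa\pa{Y_{\br{L}}}$ of $\cov\pa{X}$. The left square of~\eqref{keydiagram} commutes on the nose, so $F\pa{\fa\pa{Y_{\br{L}}}}=\varepsilon\pa{F\pa{Y_{\br{L}}}}$, and the natural isomorphism $\varepsilon\cong\fs^{\ast}$ from~\eqref{keysquare2} gives an isomorphism of $J$-sets
\[
	F\pa{\fa\pa{Y_{\br{L}}}}\cong\fs^{\ast}\pa{F\pa{Y_{\br{L}}}}.
\]
By Lemma~\ref{covsofY}, $F\pa{Y_{\br{L}}}\cong L\b G$ as $G$-sets, and replacing $L$ by a $G$-conjugate only relabels this, so the choice of representative $L\in\br{L}$ is harmless. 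Hence $F\pa{\fa\pa{Y_{\br{L}}}}\cong\fs^{\ast}\pa{L\b G}$ as $J$-sets.

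Next I would apply~\ref{pbtransG2set} with $h=\fs$ and $I=\im{\fs}=H$. This identifies the orbit space of the $J$-set $\fs^{\ast}\pa{L\b G}$ with the double-coset set $L\b G\s H$, and shows that the orbit indexed by $LgH$ is the $J$-orbit of the coset $Lg$, which is transitive with stabilizer $_{Lg}J=\fs^{-1}\pa{g^{-1}Lg}$ (this is~\ref{stabilizersgrouppb} as used inside that proof). Since $\im{\fs}=H$, any preimage under $\fs$ is unaffected by intersecting with $H$, so $\fs^{-1}\pa{g^{-1}Lg}=\fs^{-1}\pa{g^{-1}Lg\cap H}=\pa{g^{-1}Lg\cap H}'$; this is exactly where the intersection with $H$ in~\eqref{isotypecomp} enters.

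Finally I would invoke the $\pi_0$ statement of Lemma~\ref{covsofY} for the cover $\fa\pa{Y_{\br{L}}}\to X$: the assignment $C\mapsto F\pa{C}$ is a bijection $\pi_0\pa{\fa\pa{Y_{\br{L}}}}\to F\pa{\fa\pa{Y_{\br{L}}}}\s J$, and if $C$ is a component with $z\in F\pa{C}$ then $C\cong X_{\br{_{z}J}}$. Composing the three bijections above and inverting produces the bijection $\Gamma:L\b G\s H\to\pi_0\pa{\fa\pa{Y_{\br{L}}}}$ of~\eqref{compspb}; and for the component $\Gamma\pa{LgH}$ one may take $z=Lg$ in its fiber, so that $_{Lg}J=\pa{g^{-1}Lg\cap H}'$ gives $\Gamma\pa{LgH}\cong X_{\br{\pa{g^{-1}Lg\cap H}'}}$, which is~\eqref{isotypecomp}. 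The only thing needing care is that every identification in the chain is $J$-equivariant, so that stabilizer subgroups—hence the isomorphism types $X_{\br{\cdot}}$ of components—are genuinely preserved, and that the double-coset indexing of~\ref{pbtransG2set} lines up with the $\pi_0$-indexing of Lemma~\ref{covsofY}; both are routine once the equivariance is tracked, so I do not anticipate a serious obstacle.
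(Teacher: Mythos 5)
Your proposal is correct and follows essentially the same route as the paper's proof: both transport $\fa\pa{Y_{\br{L}}}$ through the fiber functor $F$ and the natural isomorphism $\varepsilon\cong\fs^{\ast}$ of diagram~\eqref{keysquare2} to identify $F\pa{\fa\pa{Y_{\br{L}}}}$ with $\fs^{\ast}\pa{L\b G}$ as $J$-sets, then read off the component count from~\ref{pbtransG2set} and the component isotypes from the stabilizer computation ${_{Lg}J}=\fs^{-1}\pa{g^{-1}Lg}=\pa{g^{-1}Lg\cap H}'$ together with Lemma~\ref{covsofY}. The paper merely makes the chain of $J$-equivariant bijections explicit by fixing a point $w\in p^{-1}\pa{y_0}$ with $_{w}G=L$, which is the same bookkeeping you flag at the end as "routine once the equivariance is tracked."
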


Recall that $K' := \fs^{-1}(K)<J$.

\begin{proof}
There exists $w\in p^{-1}\pa{y_0}$ such that $p_{\sharp}\pa{\pi_{1}\pa{Y_{\br{L}},w}}=L$.
Note that $_{w}G=L$ and $wG=p^{-1}\pa{y_0}$.
By Lemma~\ref{covsofY}, we have the $G$-set isomorphism:
\begin{equation}\label{isoforw}\begin{split}
\xymatrix@R=0pt{
	L\b G	\ar[r]^-{\sigma}			&		wG	\\
	L g 		\ar@{|-{>}}[r]    &		w\cdot g}
\end{split}\end{equation}
By~\eqref{keydiagram}, we have:
\begin{equation}\label{keydiagramYL}\begin{split}
\xymatrix{
    Y_{\br{L}}					\ar@{|-{>}}[r]^-{F}	\ar@{|-{>}}[d]_-{\fa}	&	p^{-1}\pa{y_0}		\ar@{=}[r]		\ar@{|-{>}}[d]_-{\varepsilon}		&
    			wG	\ar@{|-{>}}[d]_-{\fs^{\ast}}	&	\ar[l]_{\sigma}^{\cong}	L\b G	 \ar@{|-{>}}[d]_-{\fs^{\ast}}  \ar@{|-{>}}[dr]^-{\iota^{\ast}}\\
    \fa\pa{Y_{\br{L}}} 	\ar@{|-{>}}[r]^-{F} 											&	\fa(p)^{-1}\pa{x_0} 		& wG		\ar[l]_-{\nu\pa{wG}}^-{\cong}	&		L\b G		\ar[l]_{\fs^{\ast}(\sigma)}^{\cong}	&	\ar@{|-{>}}[l]_-{\lambda^{\ast}}	L\b G}
\end{split}\end{equation}
Consider the bijections (two are $J$-set isomorphisms):
\begin{equation}\label{horbij}\begin{split}
\xymatrix@R=0pt{
	\fa(p)^{-1}\pa{x_0} 		& wG		\ar[l]_-{\nu\pa{wG}}^-{\cong}	&		L\b G		\ar[l]_{\fs^{\ast}(\sigma)}^{\cong}	&	\ar[l]	L\b G\\
	\pa{x_0,w\cdot g}  		&		\ar@{|-{>}}[l] w\cdot g  &	\ar@{|-{>}}[l] Lg		&	\ar@{|-{>}}[l]		Lg}
\end{split}\end{equation}
By Lemma~\ref{covsofY}, \eqref{horbij}, and \ref{surjorbitspacebij}, we have bijections:
\[
\xymatrix@R=0pt{
	\pi_0\pa{\fa\pa{Y_{\br{L}}}}	\ar[r]	&	    \fa(p)^{-1}\pa{x_0}\s J 		&   \ar[l]  wG\s J	&		L\b G\s J		\ar[l]	&	\ar[l]	L\b G\s H\\
								C			\ar@{|-{>}}[r]		&			\pa{x_0,w\cdot g}J  		&		\ar@{|-{>}}[l] \pa{w\cdot g}J  &	\ar@{|-{>}}[l] LgJ		&	\ar@{|-{>}}[l]		LgH}
\]
where $C$ denotes the unique component of $\fa\pa{Y_{\br{L}}}$ containing $\pa{x_0,w\cdot g}J$.
Define $\Gamma\pa{LgH}:=C$.
It remains to prove~\eqref{isotypecomp}.
By Lemma~\ref{covsofY}:
\[
	\Gamma\pa{LgH}\cong X_{\br{_{\pa{x_0,w\cdot g}}J}}
\]
Finally:
\[
	{_{\pa{x_0,w\cdot g}}}J	= {_{w\cdot g}}J = {_{Lg}}J  = \fs^{-1}\pa{{_{Lg}}G} = \fs^{-1}\pa{g^{-1}Lg}  = \pa{g^{-1}Lg\cap H}'
\]
where the first two equalities hold by the isomorphisms in~\eqref{horbij}, the third holds by~\ref{stabilizersgrouppb}, the fourth is clear, and the last holds by definition.
\end{proof}

We remind the reader of the abbreviations~\eqref{abbrev}.

\begin{corollary}[Properties of Topological Pullback]\label{keycor}
The following are consequences of Lemma~\ref{keylemma}:
\begin{enumerate}\setcounter{enumi}{\value{equation}}
\item\label{pbYeqX} $\fa\pa{Y}\cong X$
\item If $L\triangleleft G$, then $\fa\pa{Y_{\br{L}}}\cong c\cdot X_{\br{\pa{L\cap H}'}}$ where $c\geq 1$.
\item\label{pbucY} $\fa\pa{\wt{Y}}\cong \br{G:H}\cdot X_{\br{N}}$ where $N:=\ker{\fs}$.
\item If $\fs$ is injective, then $\fa\pa{\wt{Y}}\cong \br{G:H}\cdot \wt{X}$.
\item If $H\triangleleft G$, then $H\b G\s H=H\b G$ and $\fa\pa{Y_{\br{H}}}\cong \br{G:H}\cdot X$.
\item\label{fssurj} The following are equivalent: (1) $\fs$ is surjective, (2) the pullback of each connected cover is connected, and (3) $\fa\pa{Y_{\br{L}}}\cong X_{\br{(L\cap H)'}}$ for each $L<G$.
\item\label{pbH} $\fa\pa{Y_{\br{H}}}\cong c\cdot X +E$ where $c\geq 1$ and $E$ contains no component isomorphic to $X$ ($E$ may be empty).
\setcounter{equation}{\value{enumi}}
\end{enumerate}
\qed
\end{corollary}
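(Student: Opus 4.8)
The plan is to deduce each item of Corollary~\ref{keycor} directly from Lemma~\ref{keylemma}, using the abbreviations~\eqref{abbrev} and translating the double-coset bookkeeping $L\backslash G\slash H$ into statements about components of $\fa(Y_{\br{L}})$. The master formula is: the components of $\fa(Y_{\br{L}})$ are indexed by $L\backslash G\slash H$, and the double coset $LgH$ contributes the component $X_{\br{(g^{-1}Lg\cap H)'}}$.

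For~\ref{pbYeqX}, I would take $L=G$, so there is a single double coset $GgH=G$, and the contributed component is $X_{\br{(G\cap H)'}}=X_{\br{H'}}=X_{\br{\fs^{-1}(H)}}=X_{\br{J}}=X$ by~\eqref{abbrev}. For the normal case, $L\triangleleft G$ gives $g^{-1}Lg=L$ for every $g$, so every double coset contributes the same component $X_{\br{(L\cap H)'}}$, and $c=\card{L\backslash G\slash H}\geq 1$. Item~\ref{pbucY} is $L=\cpa{e}$: then $\cpa{e}\backslash G\slash H$ has exactly $\br{G:H}$ elements (left cosets $gH$), each contributing $X_{\br{(\cpa{e}\cap H)'}}=X_{\br{\cpa{e}'}}=X_{\br{\ker\fs}}=X_{\br{N}}$; the injective case then specializes $N=\cpa{e}$ to get $\br{G:H}\cdot\wt{X}$. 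The case $H\triangleleft G$ with $L=H$ is symmetric to the normal-$L$ case: $H\backslash G\slash H=H\backslash G$ since $H$ is normal, and each double coset contributes $X_{\br{(g^{-1}Hg\cap H)'}}=X_{\br{(H\cap H)'}}=X_{\br{H'}}=X$, with $\br{G:H}$ of them.

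For~\ref{fssurj}, the equivalence $(1)\Leftrightarrow(2)$: if $\fs$ is surjective then $H=G$, so $L\backslash G\slash H=L\backslash G\slash G$ is a single double coset and $\fa(Y_{\br{L}})$ is connected; conversely if every $\fa(Y_{\br{L}})$ is connected, apply this to $L=H$ (or invoke~\ref{hsurjifftranstotrans} via~\eqref{keysquare2}, since $F$ is an equivalence and $\varepsilon\cong\fs^{\ast}$) to conclude $H=G$. For $(1)\Leftrightarrow(3)$: surjectivity gives $g^{-1}Lg\cap H=g^{-1}(L\cap H)g$ conjugate in $H$ to $L\cap H$ when $H=G$... more carefully, when $H=G$ the single double coset contributes $X_{\br{(L\cap H)'}}=X_{\br{L'}}$; conversely if each $\fa(Y_{\br{L}})$ is a single copy of $X_{\br{(L\cap H)'}}$ then in particular it is connected, so $(3)\Rightarrow(2)\Rightarrow(1)$. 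Finally~\ref{pbH} is the general statement for $L=H$: the double coset $HeH=H$ contributes $X_{\br{(H\cap H)'}}=X_{\br{H'}}=X_{\br{J}}=X$, so $c\geq 1$, and I would argue the remaining double cosets $HgH$ with $g\notin H$ contribute components $X_{\br{(g^{-1}Hg\cap H)'}}$ none of which equals $X=X_{\br{J}}$, collecting these into $E$.

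The main obstacle is the last clause of~\ref{pbH}: showing no component outside the distinguished copy of $X$ is isomorphic to $X$. A component $\Gamma(HgH)\cong X_{\br{(g^{-1}Hg\cap H)'}}$ equals $X=X_{\br{J}}$ iff $(g^{-1}Hg\cap H)'=\fs^{-1}(g^{-1}Hg\cap H)$ is $J$-conjugate to $J$, i.e. equals $J$, i.e. $g^{-1}Hg\cap H\supseteq \im\fs=H$, forcing $g^{-1}Hg\cap H=H$ and hence $H\subseteq g^{-1}Hg$; but then $HgH=Hg'H$ for $g'$ with... the cleanest route is: $X_{\br{(g^{-1}Hg\cap H)'}}\cong X_{\br{J}}$ forces $(g^{-1}Hg\cap H)'\equiv_J J$, so $(g^{-1}Hg\cap H)'=J$ (the only subgroup $J$-conjugate to $J$ is $J$), so $g^{-1}Hg\cap H=H$ (applying $\fs$ and using $\im\fs=H$), so $H=g^{-1}Hg$ (equal finite... or equal as they have equal intersection with one containing the other only if... actually $g^{-1}Hg\cap H=H$ just says $H\subseteq g^{-1}Hg$). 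To finish I would note that then $gH=Hg$ ... this requires a small argument that such double cosets, if any, still do not produce $X$; alternatively, since~\ref{pbH} only asserts existence of such a decomposition with $c\geq1$, I would simply let $c$ be the number of double cosets $HgH$ with $g^{-1}Hg\cap H=H$ (which is at least $1$, from $g=e$) and let $E$ collect the rest, so that $E$ by construction contains no component isomorphic to $X$; the inequality $c\geq 1$ is all that is claimed, so no sharper analysis is needed.
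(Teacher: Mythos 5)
Your proposal is correct and follows exactly the route the paper intends: the corollary is stated as an immediate consequence of Lemma~\ref{keylemma} with no written argument, and each item is obtained, as you do, by specializing $L$ and reading off the double cosets $L\b G\s H$ together with the corresponding components $X_{\br{\pa{g^{-1}Lg\,\cap H}'}}$. Your final resolution of~\ref{pbH} is the right one: since $K'=\fs^{-1}(K)=J$ iff $H\subset K$, a component $X_{\br{\pa{g^{-1}Hg\,\cap H}'}}$ is isomorphic to $X=X_{\br{J}}$ iff $g^{-1}Hg\cap H=H$, so grouping the double cosets according to whether this holds yields the asserted decomposition with $c\geq 1$ (from $g=e$) and with $E$ containing no component isomorphic to $X$.
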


\section{Nullity Zero}\label{s:nullityzero}

Recall that an object of $\cov\pa{Y}$ is \textbf{trivial} provided it is isomorphic to a disjoint union $c\cdot Y$ for some cardinal number $c$.
By~\ref{pbYeqX}, $\fa:\cov\pa{Y}\to\cov\pa{X}$ sends each trivial object to a trivial object, specifically $\fa\pa{c\cdot Y}\cong c\cdot X$.\\

We say that $\fa$ has \textbf{nullity zero} provided only the trivial objects of $\cov\pa{Y}$ pullback to trivial objects of $\cov\pa{X}$.\\

We define nullity zero for algebraic pullback similarly.
Let $h:G_1\to G_2$ be a homomorphism.
A $G_2$-set is \textbf{trivial} provided it is isomorphic to a disjoint union $c\cdot\pa{G_2\b G_2}$ for some cardinal number $c$.
We say $h^{\ast}$ has \textbf{nullity zero} provided only the trivial group sets of $\gset{G_2}$ pullback to trivial group sets of $\gset{G_1}$ (cf.~\ref{hstartG2G2}).\\

Let $L$ be a subgroup of $G$.
The \textbf{normal closure} of $L$ in $G$, denoted $\tn{NC}\pa{G,L}$, is the subgroup of $G$ generated by $g^{-1}Lg$ for all $g\in G$.
Thus, $\tn{NC}\pa{G,L}$ is the smallest normal subgroup of $G$ containing $L$. Following Rose~\cite{rose}, $L$ is \textbf{contranormal} in $G$ provided
$\tn{NC}\pa{G,L}=G$.

\begin{theorem}\label{algnullityzero}
Let $h:G_1\to G_2$ be a homomorphism of groups.
Let $I:=\im{h}$.
Then, $h^{\ast}$ has nullity zero if and only if $I$ is contranormal in $G_2$.
\end{theorem}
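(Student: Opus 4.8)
The plan is to characterize when $h^{\ast}$ fails to have nullity zero, and show this failure is exactly equivalent to $I$ being a proper subgroup of its normal closure $\tn{NC}(G_2,I)$. Since $h^{\ast}$ respects disjoint unions and $h^{\ast}(c\cdot(G_2\b G_2)) \cong c\cdot(G_1\b G_1)$ by~\ref{hstartG2G2}, a $G_2$-set $S$ pulls back to a trivial $G_1$-set if and only if every orbit of $S$ pulls back to a disjoint union of copies of the trivial $G_1$-set $G_1\b G_1$. So it suffices to understand, for a transitive $G_2$-set $L\b G_2$, when $h^{\ast}(L\b G_2)$ is trivial as a $G_1$-set. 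By~\ref{pbtransG2set}, $h^{\ast}(L\b G_2) \cong \sum_{LgI} h^{-1}(g^{-1}Lg)\b G_1$, and each summand $h^{-1}(g^{-1}Lg)\b G_1$ equals $G_1\b G_1$ precisely when $h^{-1}(g^{-1}Lg) = G_1$, i.e. when $I = \im h \subseteq g^{-1}Lg$, equivalently $gIg^{-1} \subseteq L$. Thus $h^{\ast}(L\b G_2)$ is trivial if and only if $gIg^{-1}\subseteq L$ for every $g\in G_2$, which says exactly that $L \supseteq \tn{NC}(G_2,I)$.

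With this computation in hand, the proof splits cleanly into two directions. For the forward direction (nullity zero $\Rightarrow$ $I$ contranormal), I argue by contrapositive: if $I$ is not contranormal, set $L := \tn{NC}(G_2,I) \lneqq G_2$. By the computation above, $L\b G_2$ is a nontrivial transitive $G_2$-set (it is not isomorphic to $G_2\b G_2$ since $L\neq G_2$) whose pullback $h^{\ast}(L\b G_2)$ is trivial, so $h^{\ast}$ does not have nullity zero. For the reverse direction ($I$ contranormal $\Rightarrow$ nullity zero), suppose $S$ is a $G_2$-set with $h^{\ast}(S)$ trivial. Decompose $S = \sum_{s\in T} {}_sG_2\b G_2$ over a transversal $T$ for $S\s G_2$ (using~\eqref{stabcosettrans}). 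Since $h^{\ast}$ respects disjoint unions and no orbit of $h^{\ast}(S)$ can be anything but a copy of $G_1\b G_1$, each $h^{\ast}({}_sG_2\b G_2)$ must be trivial; by the computation this forces ${}_sG_2 \supseteq \tn{NC}(G_2,I) = G_2$, so ${}_sG_2 = G_2$ and each orbit of $S$ is a singleton. Hence $S \cong c\cdot(G_2\b G_2)$ where $c = |T|$, i.e. $S$ is trivial, proving nullity zero.

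The only genuinely substantive step is the orbit-by-orbit computation, and even that is essentially unpacking~\ref{pbtransG2set}: the key observation is the elementary equivalence ``$h^{-1}(K) = G_1 \iff \im h \subseteq K$'', applied with $K = g^{-1}Lg$ ranging over all $G_2$-conjugates of $L$, together with the fact that a transitive $G_1$-set $K\b G_1$ is isomorphic to $G_1\b G_1$ iff $K = G_1$. I would be careful about one edge case: when $L = G_2$ the set $L\b G_2$ is itself trivial, so ``$L\b G_2$ nontrivial'' in the forward direction genuinely requires $\tn{NC}(G_2,I)\lneqq G_2$, which is exactly the failure of contranormality. There is no real obstacle here; the proof is a direct application of the structural results already established for algebraic pullback, and the main work was done in Lemma~\ref{basicpropsalgpb}.
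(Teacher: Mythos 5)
Your proof is correct and follows essentially the same route as the paper's: both directions reduce to transitive $G_2$-sets and rest on the orbit decomposition of~\ref{pbtransG2set}, with the forward implication proved by contrapositive using $L=\tn{NC}\pa{G_2,I}$ and the reverse implication extracting $g^{-1}Ig\subset L$ for all $g$ from triviality of the pullback. Your only organizational difference is to isolate the equivalence ``$h^{\ast}\pa{L\b G_2}$ trivial $\iff L\supset\tn{NC}\pa{G_2,I}$'' as a single preliminary computation serving both directions, which is a harmless repackaging of the same argument.
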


\begin{proof}
First, we prove the contrapositive of the forward implication.
Assume that $K:=\tn{NC}\pa{G_2,I}\lneqq G_2$.
As $K\triangleleft G_2$ and $I<K$, $h^{-1}\pa{g^{-1}Kg}=G_1$ for every $g\in G$.
Also, $K\b G_2\s I = K\b G_2$ since:
\[
	KgI=gKI=gK=Kg
\]
So, by~\ref{pbtransG2set} we have $h^{\ast}\pa{K\b G_2} \cong \br{G_2 :K}\cdot \pa{G_1\b G_1}$.
As $K\lneqq G_2$, $K\b G_2$ is not a trivial $G_2$-set.
Hence, $h^{\ast}$ does not have nullity zero.\\

Next, we prove the reverse implication.
As each $G_2$-set is a disjoint union of transitive $G_2$-sets and pullback respects disjoint union,
it suffices to prove that if $L<G_2$ and $h^{\ast}\pa{L\b G_2}$ is trivial, then $L=G_2$.
So, suppose $h^{\ast}\pa{L\b G_2}\cong c\cdot\pa{G_1\b G_1}$ for some cardinal number $c$.
Then~\ref{pbtransG2set} implies that $h^{-1}\pa{g^{-1}Lg} = G_1$ for each $g\in G_2$.
Hence, $I\subset g^{-1}Lg$ for each $g\in G_2$.
In other words:
\[
	g^{-1}Ig \subset L \quad \tn{for each $g\in G_2$}
\]
Therefore, $G_2=\tn{NC}\pa{G_2,I}\subset L$.
Hence, $L=G_2$ as desired.
\end{proof}

\begin{corollary}\label{topnullityzero}
$\fa$ has nullity zero if and only if $H:=\im{\fs}$ is contranormal in $G$.
\end{corollary}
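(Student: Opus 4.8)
The plan is to reduce the topological statement directly to the already-proved algebraic Theorem~\ref{algnullityzero} applied to the homomorphism $\fs:J\to G$, whose image is $H$ by definition. The bridge is the key diagram~\eqref{keydiagram}: the equivalences $F$ on both sides, together with the natural isomorphism $\varepsilon\cong\fs^{\ast}$, convert the question ``does $\fa$ have nullity zero?'' into the question ``does $\fs^{\ast}$ have nullity zero?'' So the real content is checking that the notion of \emph{trivial object} is transported correctly under these functors, after which Theorem~\ref{algnullityzero} (with $G_1=J$, $G_2=G$, $I=H$) finishes the job.

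First I would record that $F:\cov\pa{Y}\to\gset{G}$ sends the trivial cover $c\cdot Y$ to the trivial $G$-set $c\cdot(G\b G)$, and likewise $F:\cov\pa{X}\to\gset{J}$ sends $c\cdot X$ to $c\cdot(J\b J)$; this is immediate from Lemma~\ref{covsofY}, since $F(Y_{\br{G}})\cong G\b G$ and $F(X_{\br{J}})\cong J\b J$. Because $F$ is an equivalence (in particular essentially injective and essentially surjective), an object $p$ of $\cov\pa{Y}$ is trivial if and only if $F(p)$ is a trivial $G$-set, and the same on the $X$ side. Next I would use the commutativity of the left square of~\eqref{keydiagram} up to natural isomorphism: $F\circ\fa$ is naturally isomorphic to $\varepsilon\circ F$, and $\varepsilon\cong\fs^{\ast}$ via $\rho,\nu$; hence for every object $p$ of $\cov\pa{Y}$ we have $F(\fa(p))\cong \fs^{\ast}(F(p))$ as $J$-sets. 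Consequently $\fa(p)$ is a trivial cover of $X$ $\iff$ $F(\fa(p))$ is a trivial $J$-set $\iff$ $\fs^{\ast}(F(p))$ is a trivial $J$-set.

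Combining these equivalences: $\fa$ has nullity zero $\iff$ for every object $p$ of $\cov\pa{Y}$, [$\fa(p)$ trivial $\implies$ $p$ trivial] $\iff$ for every $G$-set $S$, [$\fs^{\ast}(S)$ trivial $\implies$ $S$ trivial] (using that every $G$-set arises as $F(p)$ and that triviality transports both ways) $\iff$ $\fs^{\ast}$ has nullity zero $\iff$ (by Theorem~\ref{algnullityzero}) $H=\im{\fs}$ is contranormal in $G$. I do not expect a serious obstacle here; the only point requiring a little care is making sure the natural isomorphism $F\circ\fa \cong \varepsilon\circ F \cong \fs^{\ast}\circ F$ is genuinely at the level of $J$-set isomorphisms for each fiber (so that ``trivial'' is preserved), which is exactly what the commentary following diagram~\eqref{keydiagram} and the definitions of $\rho$ and $\nu$ supply.
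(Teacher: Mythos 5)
Your proposal is correct and follows essentially the same route as the paper's own (detailed) proof: the paper likewise deduces Corollary~\ref{topnullityzero} from Theorem~\ref{algnullityzero} by using the equivalences $F$ in diagram~\eqref{keysquare2} to show that triviality of objects is transported both ways (the paper's~\eqref{Ftrivialobjects}), and the natural isomorphism $\varepsilon\cong\fs^{\ast}$ to identify the two vertical functors, concluding that $\fa$ has nullity zero if and only if $\fs^{\ast}$ does (the paper's Lemma~\ref{nzequivs}). The only cosmetic difference is that the paper also records a cardinal-matching refinement along the way, which, as your argument shows, is not needed for the nullity-zero equivalence itself.
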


One may prove Corollary~\ref{topnullityzero} in the same way as Theorem~\ref{algnullityzero}, using Lemma~\ref{keylemma} in place of~\ref{pbtransG2set}.
An instructive alternative approach is to deduce Corollary~\ref{topnullityzero} from the statement of Theorem~\ref{algnullityzero} and diagram~\eqref{keysquare2} as follows.\\

Both functors $F$ in diagram~\eqref{keysquare2} are equivalences of categories.
Hence, both are full, faithful, and essentially surjective (see Remark~\ref{remequiv}).
Thus, both are essentially injective.
These observations and the definition of $F$ (see~\eqref{Fequiv}) imply the following:
\begin{equation}\label{Ftrivialobjects}
\begin{alignedat}{1}
	E\cong c\cdot Y 	& \ \  \tn{if and only if} \ \ 	F\pa{E}\cong c\cdot\pa{G\b G} \\
	E\cong c\cdot X 	& \ \  \tn{if and only if} \ \ 	F\pa{E}\cong c\cdot\pa{J\b J}
\end{alignedat}
\end{equation}

The definition of the functor $\varepsilon$ (see~\eqref{keysquare}) implies that:
\begin{equation}\label{vareptriv}
\tn{if $S\cong c\cdot\pa{G\b G}$, then $\varepsilon\pa{S}\cong c\cdot\pa{J\b J}$}
\end{equation}
Therefore, we say $\varepsilon$ has \textbf{nullity zero} provided $\varepsilon$ sends only the trivial $G$-sets to trivial $J$-sets.\\

\begin{lemma}
If $\fa$ has nullity zero and $\fa\pa{E}\cong c\cdot X$, then $E\cong c\cdot Y$.
If $\varepsilon$ has nullity zero and $\varepsilon\pa{S}\cong c\cdot \pa{J\b J}$, then $S\cong c\cdot \pa{G\b G}$.
If $\fs^{\ast}$ has nullity zero and $\fs^{\ast}\pa{S}\cong c\cdot \pa{J\b J}$, then $S\cong c\cdot \pa{G\b G}$.
\end{lemma}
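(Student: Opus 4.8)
The plan is to prove the three statements in sequence, noting that they are essentially the same assertion transported across the equivalences and natural isomorphisms assembled in diagram~\eqref{keydiagram}. Each statement says: a pullback-type functor with nullity zero reflects triviality in the strong sense that not only is the pulled-back object trivial $\Rightarrow$ original object trivial, but the \emph{cardinal multiplicity} is preserved.

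First I would handle the $\fa$ case. Suppose $\fa$ has nullity zero and $\fa\pa{E}\cong c\cdot X$. By the definition of nullity zero, $E$ is trivial, so $E\cong d\cdot Y$ for some cardinal $d$; it remains to show $d=c$. Apply $\fa$ and use \ref{pbYeqX} together with the fact that pullback respects disjoint union (stated in Section~\ref{s:cov}): $\fa\pa{d\cdot Y}\cong d\cdot\fa\pa{Y}\cong d\cdot X$. Hence $c\cdot X\cong d\cdot X$. Now $X$ is a connected (nonempty) cover, so the multiplicity of a connected cover in a disjoint union of copies of it is well-defined — this is exactly the uniqueness statement following~\eqref{Eisotype}, or more simply the observation that $c_{\br{L}}=d_{\br{L}}$ in the displayed isomorphism criterion near the end of Section~\ref{s:cov}. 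Therefore $c=d$.

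Second, the $\varepsilon$ case is completely parallel on the $G$-set side: if $\varepsilon$ has nullity zero and $\varepsilon\pa{S}\cong c\cdot\pa{J\b J}$, then $S$ is a trivial $G$-set, so $S\cong d\cdot\pa{G\b G}$; applying $\varepsilon$ and using~\eqref{vareptriv} gives $\varepsilon\pa{S}\cong d\cdot\pa{J\b J}$, so $c\cdot\pa{J\b J}\cong d\cdot\pa{J\b J}$, and since $J\b J$ is a single $J$-orbit the multiplicities agree, giving $c=d$. Third, the $\fs^{\ast}$ case follows the same template, using \ref{hstartG2G2} (i.e.\ $\fs^{\ast}\pa{G\b G}\cong J\b J$) in place of~\eqref{vareptriv} and the fact that algebraic pullback respects disjoint union; alternatively, one simply notes that $\fs^{\ast}$ and $\varepsilon$ are naturally isomorphic (the $\Leftrightarrow$ in~\eqref{keysquare2}), hence $\fs^{\ast}$ has nullity zero iff $\varepsilon$ does and $\fs^{\ast}\pa{S}\cong\varepsilon\pa{S}$ for each $S$, reducing the third statement to the second.

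I do not anticipate a genuine obstacle here: the only substantive point is the cancellation $c\cdot X\cong d\cdot X\Rightarrow c=d$ (and its $J$-set analogue), which is already recorded in Section~\ref{s:cov}. The mild care needed is to make sure the hypothesis ``$\fa$ has nullity zero'' is used to get triviality of $E$ before invoking the multiplicity computation; one should also note that the argument is uniform in the cardinal $c$, finite or infinite, since the comparison of multiplicities in a disjoint union of copies of a fixed connected cover (resp.\ a fixed transitive $G$-set) holds for arbitrary cardinals.
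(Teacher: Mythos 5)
Your proof is correct and follows the paper's own argument essentially verbatim: use nullity zero to write $E\cong d\cdot Y$, apply the functor, invoke \ref{pbYeqX} (resp.\ \eqref{vareptriv}, \ref{hstartG2G2}), and cancel to get $c=d$. The only difference is that you spell out why $c\cdot X\cong d\cdot X$ forces $c=d$, a detail the paper leaves implicit.
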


\begin{proof}
Suppose that $\fa$ has nullity zero and $\fa\pa{E}\cong c\cdot X$.
As $\fa$ has nullity zero, $E\cong d\cdot Y$ for some cardinal number $d$.
So:
\[
	c\cdot X \cong \fa\pa{E} \cong \fa\pa{d\cdot Y} \cong d\cdot \fa\pa{Y} \cong d\cdot X
\]
where the last isomorphism used~\ref{pbYeqX}. Thus, $c=d$ as desired.
The other two conclusions are proved similarly, but using~\eqref{vareptriv} and~\ref{hstartG2G2} respectively.
\end{proof}

The proof of the next lemma, left to the reader, is a pleasant exercise using the observations directly above and diagram~\eqref{keysquare2}.

\begin{lemma}\label{nzequivs}
The following are equivalent: (i) $\fa$ has nullity zero, (ii) $\varepsilon$ has nullity zero, and (iii) $\fs^{\ast}$ has nullity zero.
\end{lemma}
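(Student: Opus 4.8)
The plan is to establish the three biconditional implications by exploiting the natural isomorphisms and commutativity built into diagram~\eqref{keysquare2}, so that essentially all the work reduces to transporting the notion of ``trivial object'' back and forth across the equivalences $F$ and the natural isomorphism $\rho:\varepsilon\Rightarrow\fs^{\ast}$. First I would record the key fact, already noted just above, that both functors labelled $F$ in~\eqref{keysquare2} are equivalences, hence essentially injective; combined with~\eqref{Ftrivialobjects} this says that an object $E$ of $\cov\pa{Y}$ is trivial if and only if $F(E)$ is trivial as a $G$-set, and likewise downstairs for $X$ and $J$.

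For the equivalence (ii)$\Leftrightarrow$(iii), the point is simply that $\varepsilon$ and $\fs^{\ast}$ are naturally isomorphic via $\rho$ and $\nu$, and these are $J$-set isomorphisms on each object; so for any $G$-set $S$ we have $\varepsilon(S)\cong\fs^{\ast}(S)$ as $J$-sets. Hence $\varepsilon(S)$ is a trivial $J$-set if and only if $\fs^{\ast}(S)$ is, and therefore $\varepsilon$ sends only trivial $G$-sets to trivial $J$-sets exactly when $\fs^{\ast}$ does. This is the easy half.

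For (i)$\Leftrightarrow$(ii), I would chase~\eqref{keysquare2} using commutativity of the left square, $F\circ\fa = \varepsilon\circ F$. Suppose $\fa$ has nullity zero, and let $S$ be a $G$-set with $\varepsilon(S)$ trivial, say $\varepsilon(S)\cong c\cdot(J\b J)$. Pick an object $E$ of $\cov\pa{Y}$ with $F(E)\cong S$ (using essential surjectivity of $F$). Then $F(\fa(E))\cong\varepsilon(F(E))\cong\varepsilon(S)\cong c\cdot(J\b J)$, so by~\eqref{Ftrivialobjects} $\fa(E)\cong c\cdot X$ is trivial; nullity zero of $\fa$ forces $E\cong c\cdot Y$, whence $S\cong F(E)\cong c\cdot(G\b G)$ is trivial. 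The reverse direction is the symmetric argument: if $\varepsilon$ has nullity zero and $\fa(E)\cong c\cdot X$, then $\varepsilon(F(E))\cong F(\fa(E))\cong c\cdot(J\b J)$ is trivial, so $F(E)$ is a trivial $G$-set by nullity zero of $\varepsilon$, and then $E$ is trivial by~\eqref{Ftrivialobjects}. One technical nicety worth stating explicitly is that ``only the trivial objects pullback to trivial objects'' should be read as ``every object whose pullback is trivial is itself trivial,'' which is exactly how the displayed implications in the preceding lemma are phrased; with that reading the chase goes through cleanly, and the matching of multiplicities (the $c=d$ bookkeeping from the preceding lemma) is not even needed here, only the qualitative triviality.

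I do not expect any serious obstacle: the content has effectively been pre-packaged into~\eqref{Ftrivialobjects}, \eqref{vareptriv}, the natural isomorphism $\rho$, and the commutativity of~\eqref{keysquare2}. The only place to be careful is in consistently interpreting ``nullity zero'' as the implication (pullback trivial)$\Rightarrow$(object trivial) rather than its converse — the converse always holds, by~\ref{pbYeqX}, \eqref{vareptriv}, and~\ref{hstartG2G2} respectively — and in not conflating $\varepsilon$ with $\fs^{\ast}$ on the nose when only a natural isomorphism is available. Both pitfalls are handled by the structure above, so the write-up is genuinely the ``pleasant exercise'' the text advertises.
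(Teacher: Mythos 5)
Your proof is correct and uses exactly the ingredients the paper intends for this exercise: the equivalences $F$ with~\eqref{Ftrivialobjects}, the natural isomorphism between $\varepsilon$ and $\fs^{\ast}$, and commutativity of the left square of~\eqref{keysquare2}. The paper leaves the proof to the reader, and your diagram chase (including the correct observation that only qualitative triviality, not the multiplicity bookkeeping, is needed) fills it in as intended.
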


Theorem~\ref{algnullityzero} and Lemma~\ref{nzequivs} immediately imply Corollary~\ref{topnullityzero}.
This completes our alternative proof of Corollary~\ref{topnullityzero}.\\

We close this section with several examples where $H$ is a proper, contranormal subgroup of a group $G$.
They show that such subgroups arise naturally both algebraically and topologically.

\begin{example}
The simplest example is $G=\tn{Sym}\pa{n}$, the symmetric group on $n\geq 3$ letters, and $H=\fg{\tau}$, any subgroup of $G$ generated by a transposition $\tau\in G$.
\end{example}

\begin{example}
Let $H\neq\cpa{e}$ be any proper subgroup of a simple group $G$.
For instance, take $G$ to be the alternating group $\tn{Alt}\pa{n}$ on $n$ letters where $n\geq 5$, or 
take $G$ to be Thompson's group $T$, a finitely presented, infinite, simple group~\cite{cannonfloyd}.
See Higman~\cite{higman} for more infinite, simple groups.
\end{example}

\begin{example}
Consider Poincar\'{e}'s integral homology $3$-sphere $\Sigma^{3}$ with fundamental group $G$ isomorphic to the binary icosahedral group $\tn{I}\pa{120}$.
The only nontrivial, proper, normal subgroup of $G$ is its center, which has order $2$.
$G$ also contains cyclic subgroups of orders 3, 4, 5, 6, and 10.
Each of these noncentral cyclic subgroups is thus contranormal in $G$, and arises topologically as $\im\fs$ for a covering map $f:X^3\to \Sigma^3$ by a Lens space $X^3$.
\end{example}

\begin{example}
Let $\bs{Y}$ be the wedge sum of two circles based at the wedge point.
\begin{figure}[h!]
    \centerline{\includegraphics{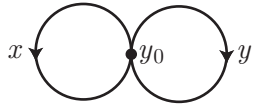}}
    \caption{Wedge sum $\bs{Y}$ of two circles, with fundamental group $G=\p{Y}=\fg{x,y}$ generated by the two embedded loops.}
    \label{wedge}
\end{figure}
Let $G=\p{Y}=\fg{x,y}$ with the canonical generators shown in Figure~\ref{wedge}.
We construct based covering spaces $\bs{X}$ of $\bs{Y}$ using the two building blocks in Figure~\ref{cover_blocks}.
\begin{figure}[h!]
    \centerline{\includegraphics{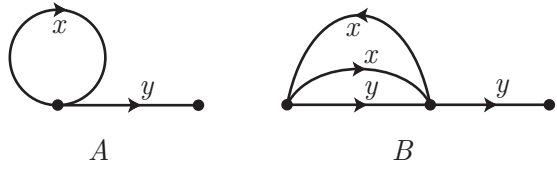}}
    \caption{Building blocks $A$ and $B$ for the construction of covers $\bs{X}$ of $\bs{Y}$. The endpoints of the segments labelled $y$ will comprise the fiber above $y_0$.}
    \label{cover_blocks}
\end{figure}

Let $X_1 X_2 \cdots X_m$ be a finite sequence where $m\geq1$ and each $X_k\in\cpa{A,B}$.
Let $a$ and $b$ denote the number of blocks in the sequence equal to $A$ and $B$ respectively.
The sequence specifies a based covering $f:\bs{X}\to\bs{Y}$ with $a+2b$ sheets as follows; there will be two choices for $x_0$ when $X_1=B$.
The space $X$ is obtained by gluing together copies of $A$ and $B$ (see Figure~\ref{cover_blocks}):
the rightmost endpoint of the horizontal segment in $X_k$ is glued to the leftmost endpoint of the horizontal segment in $X_{k+1}$.
The gluing is cyclic, so $X_m$ is glued to $X_1$ in the same manner.
If $X_1=A$, then we declare the basepoint $x_0$ of $X$ to be the image in $X$ of the leftmost endpoint of the horizontal segment in $X_1$.
If $X_1=B$, then we allow two choices: $x_0$ may be the image in $X$ of either of the two endpoints of either arc labelled $x$ in $X_1$.
Let $H:=\im{\fs}$.
Note that $a+b=m$, $\card{f^{-1}\pa{y_0}}=a+2b$, and $\br{G:H}=a+2b$.\\

For example, the sequence $A$ yields the $1$-sheeted trivial cover of $\bs{Y}$, and
the sequence $BAAAA$, for one of the two choices of $x_0$, yields the $6$-sheeted cover in Figure~\ref{six_sheeted_cover}.
\begin{figure}[h!]
    \centerline{\includegraphics{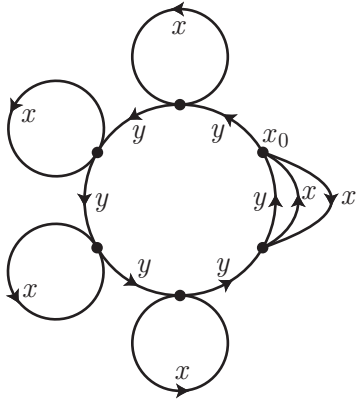}}
    \caption{Six sheeted based covering space $\bs{X}$ of the wedge of two circles $\bs{Y}$.}
    \label{six_sheeted_cover}
\end{figure}

Such a sequence is \textbf{admissible} provided $a\geq1$ and $b\geq1$.
Let  $X_1 X_2 \cdots X_m$ be an admissible sequence.
As $A$ appears in the sequence, $y^{-i}xy^{i}\in H$ for some integer $i$.
As $B$ appears in the sequence, $y^{-j}(xy)y^{j}\in H$ for some integer $j$.
Hence, $x$ and $y$ lie in $\tn{NC}\pa{G,H}$, and $H$ is a contranormal subgroup of $G$ of index $a+2b\geq 3$.
For future reference, we note that if $n:=a+2b$, then:
\[
H\b G = \cpa{H,Hy,Hy^2,\ldots,Hy^{n-1}}
\]
since $H={_{x_0}G}$ for the $G$-monodromy action on the fiber $f^{-1}\pa{y_0}$.\\

Evidently, two based covers arising from this construction are based isomorphic if and only if
their associated sequences are identical and, in case the sequences begin with $B$, the choices of $x_0$ are the same.
An elementary counting argument shows that we have produced:
\[
	 c(n):=\sum_{k=1}^{\left\lfloor{\frac{n-1}{2}}\right\rfloor} \binom{n-k-1}{k} +
	2\sum_{k=1}^{\left\lfloor{\frac{n-3}{2}}\right\rfloor} \binom{n-k-1}{k-1}
\]
pairwise nonisomorphic, based, connected, $n$-sheeted covers of $\bs{Y}$ for each $n\geq 3$.
We double these numbers by simply interchanging the roles of $x$ and $y$ throughout the construction.
Thus, we have produced $2c(n)\geq2$ contranormal subgroups of $G$ of index $n\geq 3$.
By covering space theory, $G$ contains only finitely many subgroups of each finite index $n\in\N$ (see also~\cite{hall}).\\
\end{example}

\begin{remark}
For each fixed $n\geq3$, several of the distinct subgroups of index $n$ in the previous example are $G$-conjugate.
But, at least two $G$-conjugacy classes are represented, and the number of classes represented increases with $n$.
Also, it appears that in $G=\fg{x,y}$, the number of contranormal subgroups of index $n$ greatly exceeds the number of normal subgroups of index $n$, especially as $n$ increases.
\emph{Are contranormal subgroups more common than normal subgroups, at least for most groups (in the sense of Gromov~\cite{gromov}) and finite index $n>2$?}
\end{remark}

\begin{example}
Let $\bs{Y}$ and $G$ be as in the previous example.
Consider bi-infinite sequences $\ldots X_{-1} X_0 X_1 X_2 \ldots $ where each $X_k\in\cpa{A,B}$ and at least one $A$ and one $B$ appear.
Similar to the previous example, each such sequence gives rise to a proper, contranormal subgroup $H$ of $G$, but now of infinite index $\omega$.
This construction yields uncountably many such subgroups, pairwise not $G$-conjugate even.
$G=\fg{x,y}$ also contains uncountably many normal subgroups~\cite[p.~68]{harpe}, hence uncountably many infinite index normal subgroups.
\end{example}

\section{Essential injectivity}\label{s:essinj}

\subsection{Algebraic Equivalent to Essential Injectivity}\label{ss:basic_equivalent}

In this subsection, we prove that $\fs^{\ast}$ and $\fa$ are essentially injective if and only if $\fs$ is surjective.
We begin with a useful observation.

\begin{lemma}\label{congiffinvconj}
Let $h:G_1\to G_2$ be a surjective homomorphism.
Let $A,B<G_2$. Then, $A\equiv_{G_2}B$ if and only if $h^{-1}\pa{A}\equiv_{G_1}h^{-1}\pa{B}$.
\end{lemma}

\begin{proof}
As $h$ is surjective:
\begin{equation}\label{hhinvAeqA}
	h\pa{h^{-1}\pa{A}}=A \quad\tn{and}\quad h\pa{h^{-1}\pa{B}}=B
\end{equation}
The hypothesis for the backward implication is $g^{-1}h^{-1}\pa{A}g=h^{-1}\pa{B}$ for some $g\in G_1$.
Apply $h$ to this hypothesis, and~\eqref{hhinvAeqA} yields $h(g)^{-1}Ah(g)=B$ as desired.\\

For the forward implication, observe that $K:=\ker{h}\triangleleft G_1$ and $K<h^{-1}(A)$ (of course, $K\triangleleft h^{-1}(A)$ and $K\triangleleft h^{-1}(B)$, but these facts are not needed).
By hypothesis, $g^{-1}Ag=B$ for some $g\in G_2$.
As $h$ is surjective, there exists $z\in G_1$ such that $h(z)=g$.
It suffices to prove that $z^{-1}h^{-1}(A)z=h^{-1}(B)$.
The containment ``$\subset$'' is straightforward.
So, let $x\in h^{-1}(B)$.
Then, $h(x)=g^{-1}ag$ for some $a\in A$.
As $h$ is surjective, there exists $y\in G_1$ such that $h(y)=a$.
So:
\[
	h\pa{z^{-1}yz}=g^{-1}ag=h(x)
\]
Hence, $z^{-1}yzk_0=x$ for some $k_0\in K$.
As $K\triangleleft G_1$, $zk_0=kz$ for some $k\in K$.
Thus, $x=z^{-1}(yk)z$ where $yk\in h^{-1}(A)$.
\end{proof}

\begin{lemma}\label{algsurjessinj}
Let $h:G_1\to G_2$ be a homomorphism.
If $h$ is surjective, then $h^{\ast}$ is essentially injective.
\end{lemma}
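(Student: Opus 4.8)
The plan is to compute $h^{\ast}$ on the canonical decomposition of a $G_2$-set into transitive orbits, and then use the injectivity of ``pull back the conjugacy class of a subgroup along $h$'' to match multiplicities. Let $S$ and $T$ be $G_2$-sets with $h^{\ast}(S)\cong h^{\ast}(T)$; we must show $S\cong T$.

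First I would write $S$ and $T$ in canonical form. Every $G_2$-set is the disjoint union of its orbits, each transitive and hence isomorphic to $L\b G_2$ for a subgroup $L$ whose $G_2$-conjugacy class is uniquely determined; moreover $L\b G_2\cong K\b G_2$ iff $L\equiv_{G_2}K$. Thus $S\cong\sum_{\br{L}\in SG_2}c_{\br{L}}\cdot\pa{L\b G_2}$ and $T\cong\sum_{\br{L}\in SG_2}d_{\br{L}}\cdot\pa{L\b G_2}$ for uniquely determined cardinals $c_{\br{L}},d_{\br{L}}$. Since $h^{\ast}$ respects disjoint union and, by~\ref{hast}, $h^{\ast}\pa{L\b G_2}\cong h^{-1}\pa{L}\b G_1$, we obtain $h^{\ast}(S)\cong\sum_{\br{L}\in SG_2}c_{\br{L}}\cdot\pa{h^{-1}\pa{L}\b G_1}$ and likewise for $T$ with $d_{\br{L}}$.

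The key step is to see that this last expression is (up to reindexing) the canonical transitive decomposition of the $G_1$-set $h^{\ast}(S)$. Consider the assignment $\Phi\colon SG_2\to SG_1$, $\Phi\pa{\br{L}_{G_2}}:=\br{h^{-1}\pa{L}}_{G_1}$. Because $h$ is surjective, Lemma~\ref{congiffinvconj} says $A\equiv_{G_2}B$ if and only if $h^{-1}\pa{A}\equiv_{G_1}h^{-1}\pa{B}$; the ``only if'' direction shows $\Phi$ is well defined, and the ``if'' direction shows $\Phi$ is injective. Hence distinct classes $\br{L}\in SG_2$ contribute pairwise non-isomorphic transitive summands $h^{-1}\pa{L}\b G_1$ to $h^{\ast}(S)$, so the multiplicity of the orbit type $\Phi\pa{\br{L}}$ in $h^{\ast}(S)$ is exactly $c_{\br{L}}$ (and orbit types outside $\im\Phi$ do not occur). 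By uniqueness of multiplicities in the canonical decomposition of a $G_1$-set, $h^{\ast}(S)\cong h^{\ast}(T)$ forces $c_{\br{L}}=d_{\br{L}}$ for every $\br{L}\in SG_2$, whence $S\cong T$.

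The one genuinely substantive point is the injectivity of $\Phi$: that pulling back two non-conjugate subgroups of $G_2$ along a surjection cannot produce conjugate subgroups of $G_1$. This is precisely the backward implication of Lemma~\ref{congiffinvconj} (which in turn is where surjectivity of $h$ is really used), so once that lemma is in hand the rest of the argument is bookkeeping with the unique transitive decomposition, valid for arbitrary cardinals of orbits.
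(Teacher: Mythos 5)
Your proposal is correct and follows essentially the same route as the paper's proof: decompose into transitive $G_2$-sets, use surjectivity of $h$ so that $h^{\ast}\pa{L\b G_2}\cong h^{-1}\pa{L}\b G_1$ remains transitive (items~\ref{hsurjifftranstotrans} and~\ref{hast}), and invoke the backward implication of Lemma~\ref{congiffinvconj} to recover $L\equiv_{G_2}K$ from $h^{-1}\pa{L}\equiv_{G_1}h^{-1}\pa{K}$. The only difference is presentational: the paper simply reduces to the transitive case, whereas you make the multiplicity bookkeeping explicit via the injective map $\Phi$ on conjugacy classes, which is a legitimate and slightly more detailed way of justifying that reduction.
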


\begin{proof}
Let $S_1$ and $S_2$ be $G_2$-sets such that $h^{\ast}\pa{S_1}\cong h^{\ast}\pa{S_2}$.
Each of the $G_1$-sets $h^{\ast}\pa{S_1}$ and $h^{\ast}\pa{S_2}$ is isomorphic to a disjoint union of transitive \hbox{$G_1$-sets}.
As $h$ is surjective, the pullback of a transitive $G_2$-set is a transitive \hbox{$G_1$-set} by~\ref{hsurjifftranstotrans}.
Thus, it suffices to prove the case where $S_1$ and $S_2$ are themselves transitive $G_2$-sets.
In this case, $S_1\cong L\b G_2$ and $S_2\cong K\b G_2$ for some subgroups $L$ and $K$ of $G$.
By~\ref{hast}, we have isomorphisms of $G_1$-sets:
\[
	h^{-1}\pa{L}\b G_1 \cong h^{\ast}\pa{L\b G_2} \cong h^{\ast}\pa{S_1} \cong h^{\ast}\pa{S_2} \cong h^{\ast}\pa{K\b G_2} \cong h^{-1}\pa{K}\b G_1
\]
Therefore, $h^{-1}\pa{L}\equiv_{G_1} h^{-1}\pa{K}$ (see above~\eqref{stabcosettrans}).
Hence, $L\equiv_{G_2} K$ by Lemma~\ref{congiffinvconj}, and so $S_1\cong S_2$ (as $G_2$-sets) as desired.
\end{proof}

\begin{lemma}\label{surjessinj}
If $\fs$ is surjective, then $\fa$ is essentially injective.
\end{lemma}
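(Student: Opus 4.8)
The plan is to deduce Lemma~\ref{surjessinj} from the algebraic statement Lemma~\ref{algsurjessinj} by transporting the hypothesis and conclusion across the equivalences in diagram~\eqref{keydiagram}. The key observation is that nothing new needs to be proved topologically: the work has already been done in the $G$-set world, and the diagram lets us carry it over.

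First I would assume $\fs$ is surjective. By~\eqref{factorfs}, surjectivity of $\fs$ means $H=G$ and $\fs=\lambda$, so $\fs^{\ast}=\lambda^{\ast}$ and, more to the point, Lemma~\ref{algsurjessinj} applies directly to $h:=\fs:J\to G$, giving that $\fs^{\ast}:\gset{G}\to\gset{J}$ is essentially injective. Next I would upgrade this to $\varepsilon$: since $\varepsilon$ and $\fs^{\ast}$ are naturally isomorphic (the maps $\rho$ and $\nu$ of~\eqref{keysquare2}), and natural isomorphism of functors preserves essential injectivity — indeed if $\varepsilon(S_1)\cong\varepsilon(S_2)$ then $\fs^{\ast}(S_1)\cong\varepsilon(S_1)\cong\varepsilon(S_2)\cong\fs^{\ast}(S_2)$, hence $S_1\cong S_2$ — the functor $\varepsilon$ is essentially injective as well.

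Finally I would push this through the two copies of $F$ in the commutative square~\eqref{keysquare}. Suppose $E_1,E_2$ are objects of $\cov\pa{Y}$ with $\fa\pa{E_1}\cong\fa\pa{E_2}$ in $\cov\pa{X}$. Applying the functor $F:\cov\pa{X}\to\gset{J}$ and using commutativity $F\circ\fa=\varepsilon\circ F$, we get $\varepsilon\pa{F\pa{E_1}}\cong\varepsilon\pa{F\pa{E_2}}$ in $\gset{J}$. By essential injectivity of $\varepsilon$, $F\pa{E_1}\cong F\pa{E_2}$ in $\gset{G}$. Since $F:\cov\pa{Y}\to\gset{G}$ is an equivalence, it is full, faithful, and essentially surjective, hence essentially injective (a fact already invoked around~\eqref{Ftrivialobjects}); therefore $E_1\cong E_2$ in $\cov\pa{Y}$, as desired.

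I do not expect any serious obstacle here — the entire argument is a diagram chase, and every ingredient (Lemma~\ref{algsurjessinj}, the natural isomorphism $\varepsilon\cong\fs^{\ast}$, commutativity of~\eqref{keysquare}, essential injectivity of equivalences) is already available in the excerpt. The only point requiring a word of care is the general principle that essential injectivity is stable under natural isomorphism and composable along equivalences; both are immediate once spelled out as above, so I would state them inline rather than as separate lemmas. An alternative, should one prefer to avoid the diagram entirely, is to mimic the proof of Lemma~\ref{algsurjessinj} directly using Lemma~\ref{covsofY} and Corollary~\ref{keycor}\ref{fssurj} in place of~\ref{hsurjifftranstotrans} and~\ref{hast}, together with Lemma~\ref{congiffinvconj} applied to $\fs$; but the reduction via~\eqref{keydiagram} is shorter and reuses work, so that is the route I would take.
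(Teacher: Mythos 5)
Your proof is correct, but it takes a different route from the paper's. The paper proves Lemma~\ref{surjessinj} directly at the topological level: it uses~\ref{fssurj} to see that pullback of a connected cover is connected (so one reduces to $E_1\cong Y_{\br{K}}$ and $E_2\cong Y_{\br{L}}$ connected), identifies $\fa\pa{Y_{\br{K}}}\cong X_{\br{K'}}$ and $\fa\pa{Y_{\br{L}}}\cong X_{\br{L'}}$, and then applies Lemma~\ref{congiffinvconj} to conclude $K\equiv_G L$ --- i.e.\ it is the topological mirror of the proof of Lemma~\ref{algsurjessinj}, exactly the ``alternative'' you mention at the end. Your route instead transports Lemma~\ref{algsurjessinj} across diagram~\eqref{keysquare} using the natural isomorphism $\varepsilon\Leftrightarrow\fs^{\ast}$ and the essential injectivity of the two equivalences $F$; every step you spell out (stability of essential injectivity under natural isomorphism, and that a fully faithful functor is essentially injective) is sound and indeed already noted in the paper around~\eqref{Ftrivialobjects}. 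This is legitimate and shorter, and it is precisely the strategy the paper itself adopts later: the alternative proof of Corollary~\ref{topnullityzero} and the first conclusion of Lemma~\ref{alg_red_lemma} (that $\fa$, $\varepsilon$, $\fs^{\ast}$, $\iota^{\ast}$ are simultaneously essentially injective) are the same diagram chase. What the paper's direct argument buys is independence from the equivalence $F$ and a proof that runs in visibly parallel form to the algebraic one, exercising Corollary~\ref{keycor}; what yours buys is economy, since the topological case becomes a corollary of the algebraic case rather than a second proof.
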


\begin{proof}
We are given that $H=G$.
Suppose $t:\fa\pa{E_1}\to \fa\pa{E_2}$ is an isomorphism where $p_i:E_i\to Y$ is an object of $\cov\pa{Y}$ for $i=1,2$.
In particular, $t$ induces a bijection $\pi_0\pa{\fa\pa{E_{1}}}	\to \pi_0\pa{\fa\pa{E_{2}}}$ and $t$ restricts to an isomorphism $C\to t(C)$ for each component $C$ of $\fa\pa{E_{1}}$.
By~\ref{fssurj}, the pullback of each connected cover of $Y$ is connected.
Hence, pullback induces a bijection $\pi_0\pa{E_i}	\to	\pi_0\pa{\fa\pa{E_{i}}}$ for each $i=1,2$.
Therefore, it suffices to prove the special case where $E_1$ and $E_2$ are connected.
In this case, $E_1\cong Y_{\br{K}}$ and $E_2\cong Y_{\br{L}}$ for some $K,L < G$.
By~\ref{fssurj}:
\[
	X_{\br{K'}}	\cong \fa\pa{Y_{\br{K}}} \cong \fa\pa{E_1} \cong \fa\pa{E_2} \cong \fa\pa{Y_{\br{L}}} \cong X_{\br{L'}}
\]
Thus, $K'\equiv_{J} L'$.
Lemma~\ref{congiffinvconj} implies $K\equiv_{G} L$ and so $E_1\cong E_2$.
\end{proof}

\begin{lemma}\label{notsurjnotessinj}
If $\fs$ is not surjective, then $\fa$ is not essentially injective.
\end{lemma}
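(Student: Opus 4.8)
The plan is to produce, under the hypothesis $H:=\im{\fs}\lneqq G$, two non-isomorphic objects of $\cov\pa{Y}$ having isomorphic topological pullbacks, by means of an infinite (Eilenberg) swindle: one absorbs a single extra trivial sheet $X\cong\fa\pa{Y}$ into countably many copies of a suitably chosen connected cover whose pullback already contains a copy of $X$.

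The cover to swindle with is $V:=Y_{\br{H}}$. Since $\fs$ is not surjective, $H$ is a proper subgroup of $G$, so $\br{H}\neq\br{G}$ in $SG$, and hence $V=Y_{\br{H}}\not\cong Y=Y_{\br{G}}$ by the classification of covering spaces; this is the one place the hypothesis is used, and it is immediate. The key structural input is~\ref{pbH}: $\fa\pa{V}$ has at least one component isomorphic to $X$, so I may write $\fa\pa{V}\cong X+D$ for some object $D$ of $\cov\pa{X}$.

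Next I would set $E_1:=\omega\cdot V$ and $E_2:=Y+\omega\cdot V$, both objects of $\cov\pa{Y}$ since $\cov\pa{Y}$ is closed under arbitrary disjoint unions. They are not isomorphic: $E_2$ has a component isomorphic to $Y$, while every component of $E_1$ is isomorphic to $V\not\cong Y$, so they are distinguished by the uniqueness of the component-type decomposition $E\cong\sum_{\br{L}\in SG}c_{\br{L}}\cdot Y_{\br{L}}$ recorded in Section~\ref{s:cov}. On the other hand, using that $\fa$ respects disjoint union, that $\fa\pa{Y}\cong X$ by~\ref{pbYeqX}, and the cardinal identity $1+\omega=\omega$:
\begin{align*}
\fa\pa{E_2} &\cong \fa\pa{Y}+\omega\cdot\fa\pa{V} \cong X+\omega\cdot\pa{X+D}\\
&\cong X+\omega\cdot X+\omega\cdot D \cong \omega\cdot X+\omega\cdot D \cong \omega\cdot\fa\pa{V} \cong \fa\pa{E_1}.
\end{align*}
Thus $\fa\pa{E_2}\cong\fa\pa{E_1}$ while $E_2\not\cong E_1$, so $\fa$ fails to be essentially injective. (Together with Lemma~\ref{surjessinj} this completes the equivalence ``$\fa$ essentially injective $\iff\fs$ surjective''.)

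The argument is short once set up; the only genuine decisions are recognizing that a swindle is the right tool — no finite-component cover of $Y$ witnesses the failure here, detecting the failure with finite covers being the more delicate Tannakian-type question addressed afterward — and choosing $V=Y_{\br{H}}$, precisely so that by~\ref{pbH} the trivial sheet $\fa\pa{Y}\cong X$ is already present among the components of $\fa\pa{V}$ and can therefore be swallowed. I do not expect any serious obstacle beyond this choice; the remaining steps are routine bookkeeping with disjoint unions and the component-type decomposition.
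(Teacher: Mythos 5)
Your proof is correct and follows essentially the same route as the paper: use~\ref{pbH} to find a copy of $X$ inside $\fa\pa{Y_{\br{H}}}$ and run an infinite swindle against $\omega\cdot Y_{\br{H}}$ versus $Y+\omega\cdot Y_{\br{H}}$, distinguishing the two by the uniqueness of the component-type decomposition. The only (harmless) difference is that you handle everything in one uniform case, whereas the paper splits into the cases $c$ infinite and $c$ finite; your version shows the case split is unnecessary.
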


\begin{proof}
By~\ref{pbH}:
\[
	\fa\pa{Y_{\br{H}}} \cong c\cdot X + E
\]
where $c\geq 1$ and $E$ (possibly empty) has no component isomorphic to $X$.\\*
Case 1. $c$ is infinite. Then:
\[
	\fa\pa{Y_{\br{H}}+Y} \cong c\cdot X + E + X	\cong c\cdot X + E \cong \fa\pa{Y_{\br{H}}}
\]
since $c+1=c$ (the simplest `infinite swindle'), whereas $Y_{\br{H}}+Y \ncong Y_{\br{H}}$.
The proof of Case 1 is complete.\\*
Case 2. $c$ is finite. Recall that $\omega:=\card{\N}$. Then:
\begin{align*}
	\fa\pa{\omega\cdot Y_{\br{H}}} &\cong	 \omega\cdot c\cdot X + \omega\cdot E\\
	\fa\pa{\omega\cdot Y_{\br{H}}+Y} &\cong \omega\cdot c\cdot X + \omega\cdot E + X\cong \omega\cdot c\cdot X + \omega\cdot E
\end{align*}
since $\omega\cdot c+1=\omega\cdot c$ (another infinite swindle), whereas $\omega\cdot Y_{\br{H}}\ncong \omega\cdot Y_{\br{H}}+Y$ since $H\lneqq G$.
The proof of Case 2 is complete.
\end{proof}

The same argument, but using~\ref{hstarIG2} and~\ref{hstartG2G2}, proves the following.

\begin{lemma}\label{hnotsurjhstarnotessinj}
Let $h:G_1\to G_2$ be a homomorphism.
If $h$ is not surjective, then $h^{\ast}$ is not essentially injective.\qed
\end{lemma}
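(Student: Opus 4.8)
The plan is to run the proof of Lemma~\ref{notsurjnotessinj} verbatim, with the topological inputs replaced by their algebraic counterparts: $\fa\pa{Y_{\br{H}}}$ becomes $h^{\ast}\pa{I\b G_2}$ (using~\ref{hstarIG2} in place of~\ref{pbH}) and $\fa\pa{Y}$ becomes $h^{\ast}\pa{G_2\b G_2}$ (using~\ref{hstartG2G2} in place of~\ref{pbYeqX}). Since $h$ is not surjective, $I:=\im{h}$ is a proper subgroup of $G_2$, so the transitive $G_2$-set $I\b G_2$ has cardinality $\br{G_2:I}>1$; in particular it is not a trivial $G_2$-set and $I$ is not $G_2$-conjugate to $G_2$. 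By~\ref{hstarIG2}, fix a decomposition $h^{\ast}\pa{I\b G_2}\cong c\cdot\pa{G_1\b G_1}+E$ with $c\geq 1$ and $E$ containing no orbit isomorphic to $G_1\b G_1$.

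Next I would split into the two cases of the size of $c$. If $c$ is infinite, then, since algebraic pullback respects disjoint union, using~\ref{hstartG2G2} and $c+1=c$,
\[
h^{\ast}\pa{I\b G_2 + G_2\b G_2}\cong c\cdot\pa{G_1\b G_1}+E+\pa{G_1\b G_1}\cong c\cdot\pa{G_1\b G_1}+E\cong h^{\ast}\pa{I\b G_2},
\]
while $I\b G_2 + G_2\b G_2\ncong I\b G_2$. If $c$ is finite, set $\omega:=\card{\N}$; then $\omega\cdot c=\omega$, hence $\omega\cdot c+1=\omega\cdot c$, and
\begin{align*}
h^{\ast}\pa{\omega\cdot\pa{I\b G_2}} &\cong \omega\cdot c\cdot\pa{G_1\b G_1}+\omega\cdot E,\\
h^{\ast}\pa{\omega\cdot\pa{I\b G_2}+G_2\b G_2} &\cong \omega\cdot c\cdot\pa{G_1\b G_1}+\omega\cdot E+\pa{G_1\b G_1}\cong \omega\cdot c\cdot\pa{G_1\b G_1}+\omega\cdot E,
\end{align*}
while $\omega\cdot\pa{I\b G_2}\ncong\omega\cdot\pa{I\b G_2}+G_2\b G_2$. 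In either case $h^{\ast}$ sends two non-isomorphic $G_2$-sets to isomorphic $G_1$-sets, so it is not essentially injective.

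The only point requiring any care — exactly as in Lemma~\ref{notsurjnotessinj} — is certifying the two displayed non-isomorphisms. Both follow from uniqueness of the decomposition of a $G_2$-set into transitive $G_2$-sets up to the multiplicities of orbit-types (the $G$-set analogue of the uniqueness statement recorded after~\eqref{Eisotype}, transportable along the equivalence $F$): adjoining a copy of $G_2\b G_2$ raises the multiplicity of the orbit-type $\br{G_2}$ by one, and since $I\lneqq G_2$ that multiplicity is $0$ in $I\b G_2$ and hence in $\omega\cdot\pa{I\b G_2}$. Everything else is a formal rerun of the topological argument, so I expect no genuine obstacle.
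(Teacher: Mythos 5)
Your proposal is correct and is exactly the paper's intended proof: the paper proves Lemma~\ref{hnotsurjhstarnotessinj} by the one-line remark that the argument of Lemma~\ref{notsurjnotessinj} carries over using~\ref{hstarIG2} and~\ref{hstartG2G2}, which is precisely the translation you carried out. Your extra care in certifying the two non-isomorphisms via uniqueness of the orbit-type multiplicities is a welcome (and accurate) filling-in of the detail the paper leaves implicit.
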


Lemmas~\ref{algsurjessinj}--\ref{hnotsurjhstarnotessinj} imply the main results of this subsection:

\begin{corollary}
Let $h:G_1\to G_2$ be a homomorphism. Then $h$ is surjective if and only if $h^{\ast}$ is essentially injective.\qed
\end{corollary}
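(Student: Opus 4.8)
The plan is to prove the two implications separately, and to argue the reverse implication in contrapositive form.

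For the forward direction, suppose $h$ is surjective; I want to show $h^{\ast}$ is essentially injective. Since every $G_2$-set is a disjoint union of transitive $G_2$-sets, pullback respects disjoint unions, and (by~\ref{hsurjifftranstotrans}) surjectivity of $h$ forces the pullback of a transitive $G_2$-set to be transitive, it suffices to treat $S_1\cong L\b G_2$ and $S_2\cong K\b G_2$ for subgroups $L,K<G_2$. By~\ref{hast}, $h^{\ast}\pa{L\b G_2}\cong h^{-1}\pa{L}\b G_1$ and similarly for $K$, so $h^{\ast}(S_1)\cong h^{\ast}(S_2)$ yields $h^{-1}(L)\equiv_{G_1}h^{-1}(K)$ (recall that coset spaces are isomorphic as $G_1$-sets exactly when the subgroups are $G_1$-conjugate). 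The remaining point is to descend this conjugacy back to $G_2$, i.e.\ to conclude $L\equiv_{G_2}K$; this is exactly Lemma~\ref{congiffinvconj}, whose forward implication is the delicate part (it uses surjectivity of $h$ together with normality of $\ker h$ to lift a conjugator). Then $S_1\cong S_2$, completing this half; this is Lemma~\ref{algsurjessinj}.

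For the reverse direction I would show: if $h$ is not surjective then $h^{\ast}$ is not essentially injective, by exhibiting two non-isomorphic $G_2$-sets with isomorphic pullbacks via an infinite swindle. Set $I:=\im h\lneqq G_2$. By~\ref{hstarIG2}, $h^{\ast}\pa{I\b G_2}\cong c\cdot\pa{G_1\b G_1}+E$ with $c\geq 1$ and $E$ containing no orbit isomorphic to $G_1\b G_1$. Using~\ref{hstartG2G2} (namely $h^{\ast}\pa{G_2\b G_2}\cong G_1\b G_1$) and the identities $c+1=c$ (when $c$ is infinite) or $\omega\cdot c+1=\omega\cdot c$ (when $c$ is finite), the pullbacks of $I\b G_2$ and $I\b G_2 + G_2\b G_2$ (respectively of $\omega\cdot\pa{I\b G_2}$ and $\omega\cdot\pa{I\b G_2}+G_2\b G_2$) coincide, whereas the $G_2$-sets themselves differ because $I\lneqq G_2$ makes the count of trivial orbits differ. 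This is Lemma~\ref{hnotsurjhstarnotessinj}, and it mirrors the topological swindle of Lemma~\ref{notsurjnotessinj}. Combining the two halves gives the corollary.

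I expect the main obstacle to be the reverse implication: one needs the isotypic bookkeeping of~\ref{hstarIG2} to guarantee that $E$ genuinely omits the trivial orbit, so that cancellation cannot accidentally render the two $G_2$-sets isomorphic, and one needs the case split on the finiteness of $c$ to run the swindle correctly. In the forward direction the only non-routine ingredient is the forward half of Lemma~\ref{congiffinvconj}.
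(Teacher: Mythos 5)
Your proposal is correct and follows the paper's argument essentially verbatim: the same reduction to transitive $G_2$-sets via~\ref{hsurjifftranstotrans} and~\ref{hast} plus Lemma~\ref{congiffinvconj} for the forward direction (Lemma~\ref{algsurjessinj}), and the same two-case infinite swindle via~\ref{hstarIG2} and~\ref{hstartG2G2} for the reverse (Lemma~\ref{hnotsurjhstarnotessinj}, mirroring Lemma~\ref{notsurjnotessinj}). One small misattribution: the half of Lemma~\ref{congiffinvconj} you actually invoke is the easy (backward) one --- passing from $h^{-1}(L)\equiv_{G_1}h^{-1}(K)$ to $L\equiv_{G_2}K$ simply by applying $h$ --- not the delicate forward half that lifts a conjugator using normality of $\ker h$.
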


\begin{corollary}
$\fs$ is surjective if and only if $\fa$ is essentially injective.\qed
\end{corollary}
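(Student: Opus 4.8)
The plan is to assemble the corollary directly from the two one-directional lemmas already proved in this subsection. For the forward implication, suppose $\fs$ is surjective, so $H=G$; then Lemma~\ref{surjessinj} says precisely that $\fa$ is essentially injective. For the converse I would argue by contraposition: if $\fs$ is not surjective, Lemma~\ref{notsurjnotessinj} exhibits two objects of $\cov\pa{Y}$ — namely $Y_{\br{H}}+Y$ versus $Y_{\br{H}}$ when the relevant multiplicity is infinite, and $\omega\cdot Y_{\br{H}}+Y$ versus $\omega\cdot Y_{\br{H}}$ otherwise — which are nonisomorphic (because $H\lneqq G$) yet have isomorphic pullbacks (by an infinite swindle applied to the decomposition in~\ref{pbH}); hence $\fa$ is not essentially injective. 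Chaining the two implications yields the stated biconditional.

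An instructive alternative, paralleling the second proof of Corollary~\ref{topnullityzero}, is to transport the already-established algebraic statement (the preceding corollary, applied to $h=\fs$) across diagram~\eqref{keydiagram}. Essential injectivity of a functor is invariant under pre- and post-composition with equivalences of categories and under natural isomorphism of functors; since both functors labelled $F$ in~\eqref{keysquare2} are equivalences and $\varepsilon$ is naturally isomorphic to $\fs^{\ast}$, one obtains that $\fa$ is essentially injective $\iff$ $\fs^{\ast}$ is essentially injective $\iff$ $\fs$ is surjective, where the last equivalence is the algebraic corollary. I would spell out only the elementary categorical lemma that essential injectivity passes through equivalences and natural isomorphisms, since everything else is in hand.

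At the level of the corollary there is no genuine obstacle — it is a formal consequence of Lemmas~\ref{surjessinj} and~\ref{notsurjnotessinj}. The only subtle points live inside Lemma~\ref{notsurjnotessinj} (and are already settled there): one must check that the swindled covers are truly nonisomorphic, which uses $H\lneqq G$ together with the uniqueness of the isotype-multiplicity decomposition recorded in Section~\ref{s:cov}, and that their pullbacks agree, which uses~\ref{pbH} plus the fact that pullback respects disjoint unions. Given those lemmas, the proof of the corollary is a single sentence.
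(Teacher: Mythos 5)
Your proposal is correct and matches the paper exactly: the corollary is stated there with a \qed precisely because it is the immediate conjunction of Lemma~\ref{surjessinj} and the contrapositive supplied by Lemma~\ref{notsurjnotessinj}. Your alternative route via diagram~\eqref{keydiagram} is also sound, but the paper does not bother with it here.
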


The second case of the proof of Lemma~\ref{notsurjnotessinj} used infinite component covers of $Y$, and both cases used infinite component covers of $X$.
Similar remarks apply to Lemma~\ref{hnotsurjhstarnotessinj} with components replaced by orbits.
The following questions arise.

\begin{question}\label{fcq}
If $\fs$ is not surjective, then do there exist two nonisomorphic, \emph{finite component} covers of $Y$ with isomorphic pullbacks?
Equivalently, do there exist two nonisomorphic $G$-sets with \emph{finite orbit spaces} and isomorphic pullbacks?
\end{question}

\begin{question}\label{fsq}
If $\fs$ is not surjective, then do there exist two nonisomorphic, \emph{finite sheeted} covers of $Y$ with isomorphic pullbacks?
Equivalently, do there exist two nonisomorphic \emph{finite} $G$-sets with isomorphic pullbacks?
\end{question}

\begin{remark}
Recall that topological pullback preserves the number of sheets of a cover (see~\eqref{pbfiber}), but may drastically increase the number of components (see, e.g.,~\ref{pbucY}).
Similarly, algebraic pullback preserves the cardinality of a group set, but may drastically increase the number of orbits (see, e.g.,~\ref{apbeG2}).
\end{remark}

The two questions in Question~\ref{fcq} are equivalent, as are the two questions in Question~\ref{fsq}, by the following lemma.

\begin{lemma}\label{alg_red_lemma}
Consider the four nonhorizontal functors $\fa$, $\varepsilon$, $\fs^{\ast}$, and $\iota^{\ast}$ in diagram~\eqref{keydiagram}.
One of these four functors is essentially injective if and only if all four are essentially injective.
Furthermore, for any fixed (but arbitrary) cardinal numbers $c_1$ and $c_2$, the following are equivalent:
\begin{enumerate}\setcounter{enumi}{\value{equation}}
\item\label{comp_c1c2} There exist covers $E_1$ and $E_2$ of $Y$, with $c_1$ and $c_2$ sheets respectively, such that $E_1\not\cong E_2$ and $\fa\pa{E_1}\cong\fa\pa{E_2}$.
\item There exist $G$-sets $S_1$ and $S_2$, with $c_1$ and $c_2$ elements respectively, such that $S_1\not\cong S_2$ and $\varepsilon^{\ast}\pa{S_1}\cong\varepsilon^{\ast}\pa{S_2}$.
\item There exist $G$-sets $S_1$ and $S_2$, with $c_1$ and $c_2$ elements respectively, such that $S_1\not\cong S_2$ and $\fs^{\ast}\pa{S_1}\cong\fs^{\ast}\pa{S_2}$.
\item\label{alginjequiv} There exist $G$-sets $S_1$ and $S_2$, with $c_1$ and $c_2$ elements respectively, such that $S_1\not\cong S_2$ and $\iota^{\ast}\pa{S_1}\cong\iota^{\ast}\pa{S_2}$.
\setcounter{equation}{\value{enumi}}
\end{enumerate}
Lastly, \ref{comp_c1c2}--\ref{alginjequiv} are equivalent with `sheets' replaced by `components' and `elements' replaced by `orbits'.
\end{lemma}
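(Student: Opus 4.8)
The plan is to exploit only the structural facts recorded just after diagram~\eqref{keydiagram}: the left square commutes, the right triangle commutes, both functors labelled $F$ are equivalences of categories, and $\varepsilon$ and $\fs^{\ast}$ are naturally isomorphic. I would link the four functors in a chain
\[
	\fa \;\longleftrightarrow\; \varepsilon \;\longleftrightarrow\; \fs^{\ast} \;\longleftrightarrow\; \iota^{\ast},
\]
and show that each consecutive pair is interchangeable both for essential injectivity and for the existence of witnesses with prescribed cardinal counts, and that moving along the chain never disturbs the relevant counts (sheets versus elements at the left-hand end, components versus orbits throughout).

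First I would treat the link $\fa\leftrightarrow\varepsilon$. Since both copies of $F$ are equivalences, each is essentially injective and essentially surjective; moreover $F:\cov\pa{Y}\to\gset{G}$ sends a cover with $c$ sheets to a $G$-set of cardinality $c$ (its fibre over $y_0$) and a cover with $k$ components to a $G$-set with $k$ orbits (by the bijection $\pi_0\pa{E}\cong p^{-1}\pa{y_0}\s G$ of Lemma~\ref{covsofY}), and similarly for the copy $F:\cov\pa{X}\to\gset{J}$. Given covers $E_1,E_2$ of $Y$, set $S_i:=F\pa{E_i}$; then $E_i$ has $c_i$ sheets (resp.\ components) if and only if $S_i$ has $c_i$ elements (resp.\ orbits), $E_1\cong E_2$ if and only if $S_1\cong S_2$ (forward by functoriality, backward by essential injectivity of $F$), and, because the left square of~\eqref{keydiagram} commutes, $\fa\pa{E_1}\cong\fa\pa{E_2}$ if and only if $F\fa\pa{E_1}\cong F\fa\pa{E_2}$ if and only if $\varepsilon\pa{S_1}\cong\varepsilon\pa{S_2}$. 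Essential surjectivity of $F$ guarantees every $G$-set is of the form $F\pa{E_i}$, so the $\fa$-version and the $\varepsilon$-version of each statement are equivalent for every fixed pair $(c_1,c_2)$, and $\fa$ is essentially injective if and only if $\varepsilon$ is.

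The link $\varepsilon\leftrightarrow\fs^{\ast}$ is immediate: these functors are naturally isomorphic via the $\rho,\nu$ of~\eqref{keysquare2}, so $\varepsilon\pa{S}\cong\fs^{\ast}\pa{S}$ as $J$-sets for every $G$-set $S$, and since the two functors share the same domain object $S$ (hence the same element-count and orbit-count) both versions of each statement, and essential injectivity, transfer verbatim. For $\fs^{\ast}\leftrightarrow\iota^{\ast}$, recall from~\eqref{keytriangle} that $\fs^{\ast}=\lambda^{\ast}\circ\iota^{\ast}$ with $\lambda:J\to H$ surjective, so $\lambda^{\ast}$ is essentially injective by Lemma~\ref{algsurjessinj}. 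If $\iota^{\ast}\pa{S_1}\cong\iota^{\ast}\pa{S_2}$, applying the functor $\lambda^{\ast}$ gives $\fs^{\ast}\pa{S_1}\cong\fs^{\ast}\pa{S_2}$; conversely $\fs^{\ast}\pa{S_1}\cong\fs^{\ast}\pa{S_2}$ reads $\lambda^{\ast}\pa{\iota^{\ast}S_1}\cong\lambda^{\ast}\pa{\iota^{\ast}S_2}$, and essential injectivity of $\lambda^{\ast}$ yields $\iota^{\ast}\pa{S_1}\cong\iota^{\ast}\pa{S_2}$. Again the domain object $S_i$ is untouched, so all counts are preserved. Concatenating the three links proves the four-way equivalence of the witness statements in both the ``sheets/elements'' and ``components/orbits'' forms and, by chaining the essential-injectivity equivalences, shows that one of the four functors is essentially injective if and only if all four are.

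I do not expect a serious obstacle; the only point requiring care is the bookkeeping in the first link — keeping straight which cardinal is being counted (sheets of a cover of $Y$ as opposed to elements of a $G$-set, components as opposed to $G$-orbits) and invoking essential surjectivity of $F$ to promote a witness pair of $G$-sets back to a witness pair of covers. The remaining two links are purely formal manipulations with a natural isomorphism and with a factorization through an essentially injective functor.
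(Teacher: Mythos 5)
Your proof is correct and follows essentially the route the paper intends: the paper leaves the argument as an exercise citing exactly the ingredients you use (diagram~\eqref{keydiagram}, the fact from Remark~\ref{remequiv} that the equivalences $F$ are full, faithful, and essentially surjective, the factorization~\eqref{factorfs}--\eqref{keytriangle}, and Lemma~\ref{algsurjessinj} applied to $\lambda^{\ast}$), together with~\eqref{Fequiv} and the bijection~\eqref{pi0bij} for the sheet/element and component/orbit bookkeeping. Your chain $\fa\leftrightarrow\varepsilon\leftrightarrow\fs^{\ast}\leftrightarrow\iota^{\ast}$ is precisely this argument written out.
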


\begin{proof}
The first conclusion is a pleasant exercise using diagram~\eqref{keydiagram}, Remark~\ref{remequiv}, diagrams~\eqref{factorfs} and~\eqref{keytriangle}, and Lemma~\ref{algsurjessinj}.
The second and third conclusions follow from the proof of the first conclusion, by~\eqref{Fequiv}, and by the bijection~\eqref{pi0bij}.
\end{proof}

Note that~\ref{alginjequiv} is algebraic pullback for inclusion $\iota:H\hookrightarrow G$, which will soon be our focus.\\

We will answer Question~\ref{fcq} in the affirmative (in general).
We will answer Question~\ref{fsq} in the affirmative when $\br{G:H}<\infty$ (in particular, for $\card{G}$ finite).
If $\br{G:H}=\infty$, then Question~\ref{fsq} sometimes has an affirmative answer (consider $X:=\cpa{x_0}\hookrightarrow Y:=S^1$),
and sometimes has a negative answer as the following example shows.

\begin{example}\label{Qexample}
Let $G=\Q$, the additive group of rational numbers (for a presentation of $\Q$, see~\cite[p.~32]{mks}).
Let $H< G$ be any nontrivial, proper subgroup.
Every proper subgroup of $G$ has infinite index~\cite[pp.~61--62]{kurosh}.
In particular, $\br{G:H}=\omega$.
Let $\iota:H\hookrightarrow G$ be inclusion.
By Lemma~\ref{hnotsurjhstarnotessinj},
$\iota^{\ast}:\gset{G}\to\gset{H}$ is not essentially injective.
By~\ref{apbeG2}:
\begin{equation}\label{inf_index_trick}
	\iota^{\ast}\pa{\cpa{e}\b G} \cong \br{G:H}\cdot \pa{\cpa{e}\b H} \cong 2\br{G:H}\cdot \pa{\cpa{e}\b H} \cong \iota^{\ast}\pa{2\cdot\pa{\cpa{e}\b G}}
\end{equation}
whereas $\cpa{e}\b G \not\cong 2\cdot \pa{\cpa{e}\b G}$.
Thus, Question~\ref{fcq} has an affirmative answer for $\iota$.
On the other hand, the only finite, transitive $G$-set up to isomorphism is $G\b G$.
So, the finite $G$-sets up to isomorphism are $n\cdot\pa{G\b G}$ for some $n\in \N_0=\N\cup\cpa{0}$, and
$\iota^{\ast}$ is essentially injective on \emph{finite} $G$-sets.
Thus, Question~\ref{fsq} has a negative answer for $\iota$.
More generally, one may replace $\Q$ with any nontrivial, abelian divisible group,
since any proper subgroup of such a group has infinite index~\cite[p.~59]{harpe}.
Every nontrivial, abelian divisible group is infinitely generated.
\end{example}

\begin{remark}\label{q1infindex}
The argument in~\eqref{inf_index_trick} answers Question~\ref{fcq} in the affirmative when $\br{G:H}=\infty$.
Beyond Example~\ref{Qexample}, we leave Question~\ref{fsq} unexplored when $\br{G:H}=\infty$.
We now focus our attention on answering Question~\ref{fsq} affirmatively
in the case: algebraic pullback for inclusion $\iota:H\hookrightarrow G$ and $\br{G:H}<\infty$.
Evidently, this will answer Question~\ref{fcq} affirmatively when $\br{G:H}<\infty$, and hence in general.
\end{remark}

We close this subsection by reducing to the finite group case.

\begin{lemma}\label{finite_red_lemma}
Suppose $\br{G:H}<\infty$ and $\iota:H\hookrightarrow G$ is not surjective.
Then, there is a commutative diagram of homomorphisms:
\begin{equation}\label{finite_group_diag}\begin{split}
\xymatrix{
    H		\ar@{->>}[d]_-{\pi_0} \ar@{^{(}->}[r]^-{\iota}		&	\ar@{->>}[d]^-{\pi}  G	\\
    H\s K  	\ar@{^{(}->}[r]^-{\iota_0}  	& G\s K }
\end{split}\end{equation}
where $H\s K$ and $G\s K$ are finite groups, and $\iota_0$ is inclusion but is not surjective.
Algebraic pullback yields the commutative diagram of functors:
\begin{equation}\label{finite_red_functors}\begin{split}
\xymatrix{
    \gset{H}	 	&	 \ar[l]_-{\iota^{\ast}}	 \gset{G}	\\
    \gset{\pa{H\s K}} \ar[u]^-{\pi_0^{\ast}} 	  	& \ar[l]_-{\iota_0^{\ast}} \ar[u]_-{\pi^{\ast}} \gset{\pa{G\s K}} }
\end{split}\end{equation}
For any fixed (but arbitrary) cardinal numbers $c_1$ and $c_2$, if:
\begin{enumerate}\setcounter{enumi}{\value{equation}}
\item\label{GKsets} There exist $\pa{G\s K}$-sets $S_1$ and $S_2$, with $c_1$ and $c_2$ elements respectively, such that $S_1\not\cong S_2$ and $\iota_0^{\ast}\pa{S_1}\cong\iota_0^{\ast}\pa{S_2}$,
\setcounter{equation}{\value{enumi}}
\end{enumerate}
then~\ref{alginjequiv} holds.
\end{lemma}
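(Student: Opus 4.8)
The plan is to take for $K$ the \emph{core} of $H$ in $G$, namely the largest normal subgroup of $G$ contained in $H$,
\[
	K:=\bigcap_{g\in G} g^{-1}Hg\ \triangleleft\ G,
\]
and to obtain $G$-sets witnessing~\ref{alginjequiv} by pulling the given $(G\s K)$-sets back along the quotient homomorphism $\pi:G\twoheadrightarrow G\s K$. Everything in the lemma is formal once this choice of $K$ is in place.

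First I would establish diagram~\eqref{finite_group_diag}. The group $G$ acts on the right coset space $H\b G$ by right translation, and $K$ is precisely the kernel of this action; since $\br{G:H}<\infty$, the action realizes $G\s K$ as a subgroup of $\tn{Sym}\pa{H\b G}$, so $G\s K$ is finite, and hence so is the subgroup $H\s K$. Because $K\triangleleft G$ and $K<H$ we also have $K\triangleleft H$, so the quotient maps $\pi:G\twoheadrightarrow G\s K$ and $\pi_0:H\twoheadrightarrow H\s K$ make sense, $\iota_0:H\s K\hookrightarrow G\s K$ is the induced inclusion, and $\iota_0$ is not surjective because $K\subset H\lneqq G$ forces $H\s K\lneqq G\s K$. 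Commutativity of~\eqref{finite_group_diag} is the identity $\iota_0\pa{\pi_0\pa{h}}=hK=\pi\pa{\iota\pa{h}}$ for $h\in H$.

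Commutativity of the functor diagram~\eqref{finite_red_functors} is then immediate from the definition of algebraic pullback, which gives $\pa{h_2\circ h_1}^{\ast}=h_1^{\ast}\circ h_2^{\ast}$ for composable homomorphisms $h_1,h_2$ (the underlying sets are unchanged and the actions compose). Applying this to the two equal composites $H\to G\s K$ appearing in~\eqref{finite_group_diag} yields $\iota^{\ast}\circ\pi^{\ast}=\pi_0^{\ast}\circ\iota_0^{\ast}$.

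Finally, suppose~\ref{GKsets} holds, with $(G\s K)$-sets $S_1\not\cong S_2$ of cardinalities $c_1$ and $c_2$ and $\iota_0^{\ast}\pa{S_1}\cong\iota_0^{\ast}\pa{S_2}$. Put $T_i:=\pi^{\ast}\pa{S_i}$; since algebraic pullback leaves the underlying set unchanged, $T_i$ has $c_i$ elements. As $\pi$ is surjective, $\pi^{\ast}$ is essentially injective by Lemma~\ref{algsurjessinj}, so $S_1\not\cong S_2$ gives $T_1\not\cong T_2$. Applying the functor $\pi_0^{\ast}$ to the isomorphism $\iota_0^{\ast}\pa{S_1}\cong\iota_0^{\ast}\pa{S_2}$ and using commutativity of~\eqref{finite_red_functors}, we get $\iota^{\ast}\pa{T_1}=\pi_0^{\ast}\pa{\iota_0^{\ast}\pa{S_1}}\cong\pi_0^{\ast}\pa{\iota_0^{\ast}\pa{S_2}}=\iota^{\ast}\pa{T_2}$. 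Hence $T_1,T_2$ witness~\ref{alginjequiv}. The argument is essentially bookkeeping; the only substantive point is the finiteness of $G\s K$, which comes from the coset action described above, so I do not expect a genuine obstacle.
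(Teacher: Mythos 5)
Your proposal is correct and follows essentially the same route as the paper: $K$ is taken to be the kernel of the right translation action of $G$ on $H\b G$ (equivalently the core $\bigcap_{g\in G}g^{-1}Hg$), finiteness of $G\s K$ comes from embedding it in $\tn{Sym}\pa{H\b G}$, and the final step combines essential injectivity of $\pi^{\ast}$ (Lemma~\ref{algsurjessinj}) with commutativity of~\eqref{finite_red_functors}. Your explicit remark that $\pi^{\ast}$ preserves cardinalities is a small but welcome addition the paper leaves implicit.
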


\begin{proof}
$G$ acts on $H\b G$ by right translation.
This action yields the representation:
\[
	\rho:G\to\tn{Sym}\pa{H\b G}
\]
where $\tn{Sym}\pa{H\b G}$ is a finite group (since $\br{G:H}<\infty$).
Evidently:
\[
K:=\ker{\rho}=\bigcap_{g\in G}g^{-1}Hg < H
\]
Hence, $K\triangleleft G$, $K\triangleleft H$,
and $\card{G\s K}<\infty$ (since $G\s K\cong \im{\rho}$).
This readily yields diagram~\eqref{finite_group_diag} satisfying the properties stated there.
Algebraic pullback yields diagram~\eqref{finite_red_functors}.
Lemma~\ref{algsurjessinj} implies that $\pi^{\ast}$ is essentially injective.
So, assuming~\ref{GKsets}, we have $\pi^{\ast}\pa{S_1}\not\cong\pi^{\ast}\pa{S_2}$, but $\iota^{\ast}\pa{\pi^{\ast}\pa{S_1}}\cong \iota^{\ast}\pa{\pi^{\ast}\pa{S_2}}$ by commutativity of~\eqref{finite_red_functors}.
\end{proof}

\subsection{Finite Group Case via Burnside Rings}\label{ss:finitegroups}

Throughout this and the next subsection, $H$ is a subgroup of a finite group $G$.
$\gset{G}$ now denotes the category of \emph{finite} $G$-sets, and similarly for $\gset{H}$.
In this finite setting,
we adhere to convention and write $\res:\gset{G}\to \gset{H} $ (for \emph{restriction}) in place of $\iota^{\ast}$.
Recall that $SG$ denotes the set of $G$-conjugacy classes of subgroups of $G$, and similarly for $SH$.\\

The isomorphism classes of finite $G$-sets form a commutative semi-ring with identity.
Addition is induced by disjoint union.
Multiplication is induced by cartesian product equipped with the diagonal action: $\pa{z_1,z_2}\cdot g:=\pa{z_1\cdot g,z_2\cdot g}$.
The multiplicative identity is $\br{G\b G}$.
The \textbf{Burnside ring} of $G$, denoted $\burn{G}$, is the Grothendieck ring of this semi-ring.
$\burn{G}$ is a commutative ring with identity.
Additively, $\burn{G}$ is a free $\Z$-module with basis $\cpa{\br{L\b G} \mid \br{L}\in SG}$ and rank $n:=\card{SG}$.
Similarly, $\burn{H}$ is a free $\Z$-module with basis $\cpa{\br{K\b H} \mid \br{K}\in SH}$ and rank $m:=\card{SH}$.
Original references for Burnside rings include~\cite{burnside},~\cite{rota}\footnote{``The situation becomes bewildering in problems requiring an enumeration of any of the numerous collections of combinatorial objects which are nowadays coming to the fore.''--Rota.},~\cite{solomon}, and~\cite{gluck}.
Further references include~\cite{bouc},~\cite{yaman}, and~\cite{tomdieck}.\\

Elements of $\burn{G}$ have the form:
\begin{equation}\label{elt_a}
a:=\sum_{\br{L}\in SG} a_{\br{L}} \cdot \br{L\b G}
\end{equation}
where each $a_{\br{L}} \in \Z$.
Let $\burn{G}^{+}\subset\burn{G}$ be the set of isomorphism classes of nonempty, finite $G$-sets.
That is, $\burn{G}^{+}$ contains elements $a\in\burn{G}$ such that
$a_{\br{L}} \geq0$ for all $\br{L}\in SG$ and $a_{\br{L}} >0$ for at least one $\br{L}\in SG$.\\

Recall the $\Z$-module morphisms:
\begin{equation}
\xymatrix{
    \burn{H}	\ar@<-1ex>[r]_-{\ind}	&	\burn{G}	\ar@<-1ex>[l]_-{\res}}
\end{equation}
The \emph{restriction} morphism $\res$ is the natural extension of $\res:\gset{G}\to\gset{H}$ to Burnside rings.
In fact, $\res$ is a unital ring morphism.
The \emph{induction} morphism $\ind$ is defined as follows. Let $S$ be a right $H$-set.
$G$ acts on $S\times G$ on the right by $\pa{z,g}\cdot g':=\pa{z,gg'}$, and
$H$ acts on $S\times G$ on the left by $h\cdot\pa{z,g}:=\pa{x\cdot h^{-1}, hg}$.
By definition, $\ind\pa{S}$ is the quotient $H\b (S\times G)$, often denoted $S\times_{H} G$, equipped with the induced right $G$-action.
In general, $\ind$ is not a ring morphism.\\

Let $\br{L}\in SG$ and $\br{K}\in SH$. Then:
\begin{align}
\label{res_SG}	\res\br{L\b G} &= \sum_{\substack{LgH\in\\ L\b G\s H}} \br{\pa{g^{-1}Lg \, \cap H} \b H} \\
\label{ind_SH}	\ind\br{K\b H} &= \br{K\b G}
\end{align}
For~\eqref{res_SG}, use~\ref{pbtransG2set}. For~\eqref{ind_SH}, consider $\pa{Kh,g}\mapsto Khg$.\\

The \textbf{Burnside algebra} of $G$ is $\Q\burn{G}:=\Q \otimes_{\Z} \burn{G}$.
It is a $\Q$-vector space with basis $\cpa{1\otimes\br{L\b G} \mid \br{L}\in SG}$ and dimension $n$.
We abbreviate $1\otimes\br{L\b G}$ to $\br{L\b G}$.
The $\Z$-module morphisms $\res$ and $\ind$ naturally yield the $\Q$-vector space morphisms:
\begin{equation}
\xymatrix{
    \Q\burn{H}	\ar@<-1ex>[r]_-{\Q\ind}	&	\Q\burn{G}\ar@<-1ex>[l]_-{\Q\res}}
\end{equation}
Note that $\rest{\Q\res}\burn{G}=\res$ and $\rest{\Q\ind}\burn{H}=\ind$.
$\Q\res$ is a unital algebra morphism, while $\Q\ind$ generally is not.

\begin{lemma}\label{res_BGplus}
$\res$ sends $\burn{G}^{+}$ into $\burn{H}^{+}$. In particular, $\burn{G}^{+}$ contains no element in the kernel of $\res$.
\end{lemma}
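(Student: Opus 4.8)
The plan is to show that the restriction functor $\res$, at the level of Burnside rings, sends nonempty finite $G$-sets to nonempty finite $H$-sets, from which the kernel statement follows immediately (a nonzero element of $\burn{G}^{+}$ cannot map to $0 \in \burn{H}^{+}$ since $0 \notin \burn{H}^{+}$).

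First I would recall that $\burn{G}^{+}$ consists precisely of isomorphism classes of \emph{genuine} finite $G$-sets, i.e. those $a$ as in~\eqref{elt_a} with all $a_{\br{L}} \geq 0$ and at least one $a_{\br{L}} > 0$. The point is that $\res$, being induced by the honest set-level functor $\iota^{\ast} = \res : \gset{G} \to \gset{H}$, carries an actual finite $G$-set $S$ to the actual finite $H$-set $\res(S)$, which is still nonempty whenever $S$ is nonempty (restriction does not change the underlying set). So the only thing to verify is that the image is again in $\burn{H}^{+}$, i.e. that the coefficients of $\res(a)$ in the basis $\{\br{K\b H}\}$ are all $\geq 0$ and not all zero.

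The cleanest way to see this is via formula~\eqref{res_SG}: for each transitive $G$-set $L\b G$, $\res\br{L\b G}$ is a sum (over double cosets) of transitive $H$-sets $\br{(g^{-1}Lg \cap H)\b H}$, hence a nonempty $H$-set with nonnegative structure constants. Since any $a \in \burn{G}^{+}$ is a nonnegative integer combination $a = \sum a_{\br{L}} \br{L\b G}$ with at least one positive coefficient, and $\res$ is $\Z$-linear, $\res(a) = \sum a_{\br{L}} \res\br{L\b G}$ is a nonnegative integer combination of the nonempty $H$-sets $\res\br{L\b G}$. Each such $H$-set has at least one orbit (in fact $L\b G$ is nonempty so its restriction is too, since $L\b G\s H$ is nonempty), so $\res(a)$ has at least one strictly positive coefficient. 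Hence $\res(a) \in \burn{H}^{+}$.

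There is essentially no obstacle here: the result is a formal consequence of the fact that restriction is a set-level construction and that~\eqref{res_SG} exhibits $\res$ as preserving nonnegativity of structure constants. The one small point worth spelling out is why a nonempty $G$-set restricts to a nonempty $H$-set — but this is immediate since $\iota$ is injective (indeed an inclusion), so $\res$ does not alter the underlying set, or alternatively since the double coset space $L\b G\s H$ is nonempty whenever $L\b G$ is. The final sentence of the statement is then just the observation that $0 \notin \burn{H}^{+}$, so no element of $\burn{G}^{+}$ lies in $\ker(\res)$.
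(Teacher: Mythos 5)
Your proof is correct and follows essentially the same route as the paper: both arguments read off from~\eqref{res_SG} that $\res$ has nonnegative structure constants on the transitive basis and that at least one coefficient of $\res(a)$ stays strictly positive (the paper pinpoints the coefficient of $\br{\pa{L\cap H}\b H}$ for any $\br{L}$ with $a_{\br{L}}>0$), after which the kernel statement is immediate since $0\notin\burn{H}^{+}$.
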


\begin{proof}
This is immediate by~\eqref{elt_a} and~\eqref{res_SG}.
In particular, if $a_{\br{L}}>0$, then the coefficient of $\br{\pa{L \cap H} \b H}$ in $\res \, a$ is positive.
\end{proof}

The proof of the next lemma is clear.

\begin{lemma}\label{nonneg_decomp}
Let $v\in\burn{G}$. Then $v=a-b$ for unique $a,b\in\burn{G}$ such that: $a_{\br{L}}\geq0$ and $b_{\br{L}}\geq0$ for each $\br{L}\in SG$ and $a$ and $b$ have disjoint support (i.e., $a_{\br{L}}\neq0$ implies $b_{\br{L}}=0$, and $b_{\br{L}}\neq0$ implies $a_{\br{L}}=0$). \qed
\end{lemma}

\begin{lemma}\label{ker_equiv}
There exist $a,b\in\burn{G}^{+}$ such that $a\neq b$ and $\res\,a=\res\,b$ if and only if $\ker{\Q\res}$ is nontrivial.
\end{lemma}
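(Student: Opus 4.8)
The plan is to prove both directions of the biconditional, each being essentially a packaging argument built on Lemma~\ref{nonneg_decomp} and Lemma~\ref{res_BGplus}.

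\textbf{Reverse direction.} Suppose $\ker{\Q\res}$ is nontrivial, and pick a nonzero $w\in\ker{\Q\res}$. Since $\burn{G}$ spans $\Q\burn{G}$ over $\Q$ and $\Q\res$ is $\Q$-linear, after clearing denominators we may assume $w=v\in\burn{G}$ with $v\neq0$ and $\res\,v=0$. Apply Lemma~\ref{nonneg_decomp} to write $v=a-b$ with $a,b\in\burn{G}$ having nonnegative coefficients and disjoint support; then $\res\,a=\res\,b$. Because $v\neq0$, at least one of $a,b$ is nonzero, say $a\neq0$; then $a\in\burn{G}^{+}$, and by Lemma~\ref{res_BGplus}, $\res\,a\in\burn{H}^{+}$, hence $\res\,a\neq0$, hence $\res\,b\neq0$, hence $b\neq0$ as well, so $b\in\burn{G}^{+}$. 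Finally $a\neq b$ since $a-b=v\neq0$. This produces the required pair in $\burn{G}^{+}$.

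\textbf{Forward direction.} Suppose there exist $a,b\in\burn{G}^{+}$ with $a\neq b$ and $\res\,a=\res\,b$. Set $v:=a-b\in\burn{G}$. Then $v\neq0$ and $\res\,v=\res\,a-\res\,b=0$. Since $\res=\rest{\Q\res}\burn{G}$ and the inclusion $\burn{G}\hookrightarrow\Q\burn{G}$ sends the nonzero element $v$ to a nonzero element of $\Q\burn{G}$ (as $\burn{G}$ is a free $\Z$-module and $\Q\burn{G}=\Q\otimes_{\Z}\burn{G}$), we conclude $\Q\res(v)=0$ with $v\neq0$ in $\Q\burn{G}$, so $\ker{\Q\res}$ is nontrivial.

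\textbf{Main obstacle.} Neither direction is deep; the only point requiring a little care is the passage between the integral object $\burn{G}$ and its rationalization $\Q\burn{G}$. For the reverse direction one must observe that a nonzero rational kernel element can be scaled into a nonzero integral kernel element (using that $\Q\res$ restricts to $\res$ and that scaling by a positive integer is injective on $\Q\burn{G}$). For the forward direction one needs that $\burn{G}\to\Q\burn{G}$ is injective, which holds because $\burn{G}$ is $\Z$-free of finite rank. Everything else is a direct application of the two preceding lemmas, so I expect the proof to be quite short.
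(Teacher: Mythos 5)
Your proof is correct and follows essentially the same route as the paper: decompose a (cleared-denominator) integral kernel element via Lemma~\ref{nonneg_decomp} and use Lemma~\ref{res_BGplus} to rule out either piece vanishing. The only cosmetic difference is that you argue directly ($a\neq 0$ forces $\res\,a\neq 0$, hence $b\neq 0$) where the paper argues by contradiction; these are the same observation.
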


\begin{proof}
For the forward direction, consider $a-b$. For the backward direction, let $u$ be a nontrivial element of $\ker{\Q\res}$.
Then, $cu\in\ker{\res}$ for some $c\in\N$.
Write $cu=a-b$ for unique $a$ and $b$ as in Lemma~\ref{nonneg_decomp}.
It remains to show $a\neq 0$ and $b\neq 0$.
As $u\neq 0$, $a$ and $b$ are not both zero.
If $a=0$, then $0\neq b=-cu\in\burn{G}^{+}\cap\ker{\res}$ contradicting Lemma~\ref{res_BGplus}.
Similarly, $b\neq0$.
\end{proof}

By the previous lemma, we wish to show $\Q\res$ has a nontrivial kernel when $H\lneqq G$.
Of course, this is clear when $\dim{\Q\burn{G}}>\dim{\Q\burn{H}}$.
It is also clear when $H\triangleleft G$ (and $H\neq G$) since then $\res\br{G\b G}=\br{H\b H}$ and $\res\br{H\b G}=\br{G:H}\cdot\br{H\b H}$
by~\ref{hstartG2G2} and~\ref{hstarI}.
However, such arguments do not always apply.

\begin{example}
Let $G=\tn{Alt}(4)$, the alternating group on four letters. Let $H<G$ be the unique subgroup of order 4.
$H$ is isomorphic to $\Z_2 \oplus \Z_2$. Then $\card{SG}=\card{SH}=5$.
By taking $G=\tn{Alt}(4)\oplus \Z_2$ and $H<G$ the unique subgroup of order 8, we get $\card{SG}=12$ and $\card{SH}=16$.
In the previous two examples, $H\triangleleft G$.
The first example with $H$ not normal in $G$ and $\card{SH}\geq\card{SG}$ (in fact, with $\card{SH}>\card{SG}$) is when $G$ has order 96 (group $\fg{96,3}$ in MAGMA notation)
and $H<G$ is a subgroup isomorphic to $\Z_4 \oplus \Z_2 \oplus \Z_2$.
$G$ contains 3 subgroups of order 16, including $H$, and all are $G$-conjugate.
$G$ contains 16 subgroups of order 3 (cf.~\cite{miller}), $\card{SG}=21$, $\card{SH}=27$, and $G$ does not split as a semi-direct product.
More such examples, with $H$ not normal in $G$ and $\card{SH}\geq\card{SG}$, exist with $\card{G}=96$, 128, 144, 160, 168, 192, and so forth.
Such examples with $\card{G}$ odd seem to be less common, the only such with $\card{G}\leq 1000$ having $\card{G}=351$ and $\card{G}=729$.  
\end{example}

Recall that $\Q\burn{G}$ also has a basis of \emph{primitive idempotents} (see~\cite{gluck,bouc}):
\[
	B_G:=\cpa{e^{G}_{L}\in\Q\burn{G}  \mid \br{L}\in SG}
\]
Similarly, $B_H:=\cpa{e^{H}_{K}\in\Q\burn{H} \mid \br{K}\in SH}$ is a basis for $\Q\burn{H}$.
Primitive idempotents are expressed in terms of the basis $\cpa{\br{L\b G} \mid \br{L}\in SG}$ using Gluck's formula~\cite[p.~751]{bouc}.
Namely, if $L<G$ then:
\begin{equation}\label{gluck_formula}
	e^{G}_{L}=\frac{1}{\card{N_G(L)}}\sum_{K<L} \card{K}\mu(K,L)\br{K\b G}
\end{equation}
where $N_G(L)$ is the \textbf{normalizer} of $L$ in $G$ and $\mu$ is the M\"obius function of the poset of \emph{all} subgroups of $G$ (cf.~\cite{pahlings}).
In particular, note that the sum in~\eqref{gluck_formula} is over all subgroups of $L$, not just conjugacy classes of subgroups.

\begin{example}
Let $G=\tn{Sym}(3)$ and $H=\fg{(1,2)}$.
Let $L:=\fg{(1,2,3)}$.
With respect to the indicated ordered bases of $\burn{G}$ and $\burn{H}$ respectively, $\res$ and $\Q\res$ are represented by the matrix:
\[
M= \quad \kbordermatrix{ & \br{\cpa{e}\b G}	&	\br{H\b G} & \br{L\b G} & \br{G\b G}\\
	\br{\cpa{e}\b H}	&	3	&	1	&	1	&	0\\
	\br{H\b H}			  &	0	&	1	&	0	&	1}
\]
and $e^G_G=\br{\frac{1}{2} \ -1 \ -\frac{1}{2} \ \ 1}^{T}$.
\end{example}

\begin{lemma}[Bouc~{\cite[p.~750]{bouc}}]\label{bouc_lemma}
If $H \lneqq G$, then $\Q\res \, e^{G}_{G}=0$.
\end{lemma}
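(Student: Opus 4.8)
The plan is to compute $\Q\res\,e^G_G$ directly from Gluck's formula~\eqref{gluck_formula} and the restriction formula~\eqref{res_SG}, and show everything cancels. Taking $L=G$ in~\eqref{gluck_formula},
\[
	e^G_G=\frac{1}{\card{G}}\sum_{K<G}\card{K}\,\mu(K,G)\,\br{K\b G},
\]
the sum running over all subgroups $K$ of $G$ (not conjugacy classes). Applying $\res$ termwise and using~\eqref{res_SG}, the coefficient of $\br{H\b H}$ in $\res\br{K\b G}$ is the number of double cosets $KgH$ with $g^{-1}Kg\cap H=H$, i.e.\ with $H\subset g^{-1}Kg$. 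So
\[
	\card{G}\cdot\bigl(\text{coeff.\ of }\br{H\b H}\text{ in }\Q\res\,e^G_G\bigr)
	=\sum_{K<G}\card{K}\,\mu(K,G)\cdot\#\{KgH : H\subset g^{-1}Kg\}.
\]
The idea is that the double cosets $KgH$ with $H\subset g^{-1}Kg$ are in bijection with the cosets in $\{Kg : H\subset g^{-1}Kg\}$ modulo right $H$-action, and since $H$ acts freely on this set of size $\#\{g\in G : gHg^{-1}\subset K\}/\card{K}$... more cleanly: $\#\{KgH : H\subset g^{-1}Kg\}=\frac{1}{\card{K}\card{H}}\#\{g\in G: gHg^{-1}\subset K\}$. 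Hence
\[
	\card{G}\card{H}\cdot\bigl(\text{coeff.\ of }\br{H\b H}\bigr)
	=\sum_{K<G}\mu(K,G)\cdot\#\{g\in G: gHg^{-1}\subset K\}
	=\sum_{g\in G}\ \sum_{\substack{K<G\\ gHg^{-1}\subset K}}\mu(K,G).
\]
For each fixed $g$, the inner sum is $\sum_{K:\,gHg^{-1}\le K\le G}\mu(K,G)$, which by the defining property of the Möbius function of the subgroup lattice equals $0$ unless $gHg^{-1}=G$, i.e.\ unless $H=G$. Since $H\lneqq G$, every inner sum vanishes, so the coefficient of $\br{H\b H}$ is $0$.

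It remains to check that $\res\,e^G_G$ has \emph{no other} basis components. Here I would argue via primitive idempotents rather than recompute: $\res$ is a unital ring morphism $\Q\burn{G}\to\Q\burn{H}$, so it sends the complete orthogonal system of primitive idempotents $\{e^G_L\}_{\br{L}\in SG}$ to a complete orthogonal system of idempotents in $\Q\burn{H}$, each of which is therefore a (possibly empty) sum of the primitive idempotents $e^H_K$. Thus $\res\,e^G_G=\sum_{\br{K}\in\mathcal S}e^H_K$ for some subset $\mathcal S\subset SH$, and the subsets for the various $\br{L}$ partition $SH$. The primitive idempotent $e^H_K$ has a nonzero $\br{K'\b H}$-coordinate only for $K'\le K$ (by Gluck's formula for $\burn H$), and in particular $e^H_H$ is the unique primitive idempotent with nonzero $\br{H\b H}$-coordinate; so $\br{H\b H}\in\mathcal S$ would force the $\br{H\b H}$-coefficient of $\res\,e^G_G$ to be nonzero, contradicting the computation above. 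On the other hand, the "mark homomorphism'' (the ring map $\Q\burn{H}\to\Q$ recording the cardinality of the $H$-fixed points, equivalently the coordinate dual to $e^H_H$ in the idempotent basis) applied to $\res\,e^G_G$ equals the number of $G$-fixed points of $e^G_G$ as a $\Q$-class, which is $1$; so in fact $\mathcal S$ would have to contain exactly the index $\br{H}$ component, i.e.\ $e^H_H\in\res\,e^G_G$ with coefficient $1$ — but that again contradicts the vanishing of the $\br{H\b H}$-coefficient unless $\res\,e^G_G=0$. Hence $\mathcal S=\varnothing$ and $\Q\res\,e^G_G=0$.

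The main obstacle I anticipate is the bookkeeping in the first display chain — correctly counting the double cosets $KgH$ that contribute to the $\br{H\b H}$-coefficient and rewriting that count as a sum over $g\in G$, being careful about the orders $\card K$ and $\card H$ that enter when passing between cosets, double cosets, and group elements. Once that reindexing is done, the Möbius cancellation $\sum_{gHg^{-1}\le K\le G}\mu(K,G)=[gHg^{-1}=G]$ is the clean heart of the argument and needs nothing beyond the definition of $\mu$ on the subgroup lattice. The idempotent-orthogonality step in the last paragraph is essentially formal (it only uses that $\res$ is a unital ring homomorphism and the triangular shape of Gluck's formula), so the only real computation is the coefficient of $\br{H\b H}$; alternatively, one could skip the idempotent argument entirely and just compute every coefficient $\br{K\b H}$ of $\res\,e^G_G$ by the same Möbius technique, each reducing to $\sum_{gHg^{-1}\le P\le K}\mu(P,K)$-type sums that vanish because $H$ is not conjugate into the smaller subgroups — but the orthogonal-idempotent route is shorter.
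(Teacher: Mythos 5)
The paper offers no proof of this lemma (it is quoted from Bouc), but note that it follows in one line from the paper's own equation~\eqref{residem}: $\Q\res\, e^G_G$ is the sum of $e^H_J$ over classes $\br{J}\in SH$ with $J\equiv_G G$, and no subgroup of $H$ is $G$-conjugate to $G$ when $H\lneqq G$, so the sum is empty. Your first computation --- that the $\br{H\b H}$-coefficient of $\Q\res\, e^G_G$ vanishes --- is essentially correct. (One slip: a double coset $KgH$ with $H\subset g^{-1}Kg$ has exactly $\card{K}$ elements, so $\#\cpa{KgH : H\subset g^{-1}Kg}=\tfrac{1}{\card{K}}\#\cpa{g : gHg^{-1}\subset K}$, without the extra $\card{H}$; this only changes the overall positive constant, and the M\"obius cancellation $\sum_{gHg^{-1}\le K\le G}\mu(K,G)=0$ still kills that coefficient.)

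The genuine gap is in the second half. The orthogonal-idempotent argument correctly gives $\Q\res\, e^G_G=\sum_{\br{K}\in\mathcal{S}}e^H_K$ for some $\mathcal{S}\subset SH$, and the vanishing of the $\br{H\b H}$-coefficient correctly rules out $\br{H}\in\mathcal{S}$; but nothing you wrote rules out $\mathcal{S}$ containing a class $\br{K}$ with $K\lneqq H$. The mark-homomorphism step is where it breaks: for a $G$-set $S$ the $H$-fixed points of $\res S$ are the $H$-fixed points of $S$, so $\phi_H(\Q\res\, e^G_G)$ is the mark of $H$ \emph{as a subgroup of $G$} on $e^G_G$, namely $\delta_{\br{H}_G,\br{G}_G}=0$ --- not the number of $G$-fixed points, and not $1$. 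With the correct value $0$ you only re-derive $\br{H}\notin\mathcal{S}$, and the ``contradiction'' you build from the incorrect value $1$ cannot logically yield $\mathcal{S}=\varnothing$. The repair is to use the whole family of marks rather than just $\phi_H$: for every $K\le H$ one has $\phi_K(\res\, a)=\phi^G_K(a)$, hence $\phi_K(\Q\res\, e^G_G)=\delta_{\br{K}_G,\br{G}_G}=0$ because $K\le H\lneqq G$ forces $K\neq G$; since the total mark homomorphism $\Q\burn{H}\to\Q^{\,SH}$ is injective, $\Q\res\, e^G_G=0$. That argument also renders the entire M\"obius computation of the first half unnecessary.
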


\begin{lemma}\label{eggne0}
$e^G_G\neq 0$.
\end{lemma}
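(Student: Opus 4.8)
The plan is to exhibit $e^G_G$ explicitly using Gluck's formula~\eqref{gluck_formula} and read off a nonzero coefficient. Taking $L = G$ in~\eqref{gluck_formula} gives
\[
	e^{G}_{G}=\frac{1}{\card{N_G(G)}}\sum_{K<G} \card{K}\mu(K,G)\br{K\b G}
	=\frac{1}{\card{G}}\sum_{K<G} \card{K}\mu(K,G)\br{K\b G},
\]
since $N_G(G)=G$. This is an expression in the $\Z$-basis $\cpa{\br{K\b G}\mid \br{K}\in SG}$ of $\Q\burn{G}$ (after grouping $G$-conjugate subgroups, which share the same index and Möbius value). So it suffices to locate one basis vector with nonzero coefficient.

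The cleanest choice is the coefficient of $\br{G\b G}$ itself. The only subgroup $K<G$ with $K\equiv_G G$ is $K=G$, and $\mu(G,G)=1$ by the definition of the Möbius function. Hence the coefficient of $\br{G\b G}$ in $e^G_G$ is $\tfrac{1}{\card{G}}\cdot\card{G}\cdot 1 = 1 \neq 0$. Since $\cpa{\br{K\b G}\mid \br{K}\in SG}$ is a genuine basis of $\Q\burn{G}$ over $\Q$, a $\Q$-linear combination of basis vectors with a nonzero coefficient cannot be the zero vector. Therefore $e^G_G\neq 0$.

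There is essentially no obstacle here: the only point requiring a word of care is that Gluck's formula is stated as a sum over \emph{all} subgroups of $L$, not over conjugacy classes, so one must note that collecting the terms over a $G$-conjugacy class $\br{K}$ simply multiplies $\card{K}\mu(K,G)$ by the (positive, integer) size of the class before expressing the result in the conjugacy-class basis; this does not affect the conclusion for the class $\br{G}$, which is a singleton. Alternatively, and even more cheaply, one may simply invoke that $B_G$ is a \emph{basis} of $\Q\burn{G}$: a basis cannot contain the zero vector, so $e^G_G\neq 0$. Either route gives the result immediately, so I would present the explicit-coefficient argument as it also makes transparent why $\Q\res\,e^G_G=0$ (Lemma~\ref{bouc_lemma}) is a nontrivial statement.
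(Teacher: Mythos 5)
Your proof is correct and takes essentially the same route as the paper: both apply Gluck's formula~\eqref{gluck_formula} with $L=G$ and observe that the coefficient of $\br{G\b G}$ is $\frac{1}{\card{G}}\card{G}\mu(G,G)=1\neq 0$. The paper's proof adds only a further (unneeded for the lemma) remark that some coefficient on a proper subgroup containing $H$ must also be nonzero, which your closing comment about the nontriviality of Lemma~\ref{bouc_lemma} echoes in spirit.
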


\begin{proof}
By Gluck's formula~\eqref{gluck_formula}, the coefficient of $\br{G\b G}$ in $e^G_G$ equals:
\[
	\frac{1}{\card{G}}\card{G}\mu(G,G) =1
\]
since $\mu(L,L)= 1$ for any subgroup $L<G$, proving the lemma.
In fact, the coefficient of $\br{L\b G}$ in $e^G_G$ must be nonzero for at least one subgroup $H<L\lneqq G$ since $\Q\res \, e^{G}_{G}=0$,
$\Q\res\br{G\b G}=\br{H\b H}$, and by~\eqref{res_SG}.
\end{proof}

\begin{corollary}\label{question_cor}
Question~\ref{fsq} has an affirmative answer when $\br{G:H}<\infty$. Question~\ref{fcq} has an affirmative answer in general.
\end{corollary}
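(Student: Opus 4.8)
The plan is to bolt together the reductions and Burnside-ring facts established above, starting from the finite group case and climbing back up. By Remark~\ref{q1infindex} it suffices to answer Question~\ref{fsq} affirmatively for algebraic pullback along an inclusion $\iota:H\hookrightarrow G$ with $\br{G:H}<\infty$ (and $H\lneqq G$): this already gives Question~\ref{fcq} for finite index for free, since finite $G$-sets have finite orbit spaces, while the computation~\eqref{inf_index_trick} settles Question~\ref{fcq} when $\br{G:H}=\infty$.

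First I would handle the finite group case: $G$ finite and $H\lneqq G$. Bouc's Lemma~\ref{bouc_lemma} gives $\Q\res\, e^G_G=0$, and Lemma~\ref{eggne0} gives $e^G_G\neq 0$, so $\ker{\Q\res}$ is nontrivial. Lemma~\ref{ker_equiv} then produces $a,b\in\burn{G}^{+}$ with $a\neq b$ and $\res\, a=\res\, b$. Since the elements of $\burn{G}^{+}$ are exactly the isomorphism classes of nonempty finite $G$-sets, $a$ and $b$ represent nonisomorphic finite $G$-sets $S_1,S_2$ with $\iota^{\ast}(S_1)=\res(S_1)\cong\res(S_2)=\iota^{\ast}(S_2)$ --- precisely the data needed to feed into the reductions, with $c_1:=\card{S_1}$ and $c_2:=\card{S_2}$ finite.

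Next I would lift this to the case of an arbitrary non-surjective inclusion $\iota:H\hookrightarrow G$ with $\br{G:H}<\infty$. Lemma~\ref{finite_red_lemma} supplies the finite quotient groups $G\s K\supset H\s K$ with $\iota_0$ a non-surjective inclusion, and asserts that the data~\ref{GKsets} for $(G\s K,H\s K)$ --- which the finite group case provides --- yields statement~\ref{alginjequiv} for $(G,H)$; here the cardinalities $c_1,c_2$ are unchanged because $\pi^{\ast}$ (pullback along the surjection $\pi:G\to G\s K$) preserves the underlying set. Finally, Lemma~\ref{alg_red_lemma} carries~\ref{alginjequiv} over to statement~\ref{comp_c1c2}, i.e.\ two nonisomorphic $c_1$- and $c_2$-sheeted covers of $Y$ with isomorphic topological pullbacks. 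As $c_1,c_2$ are finite, this is an affirmative answer to Question~\ref{fsq} when $\br{G:H}<\infty$, and hence, as noted in the first paragraph, an affirmative answer to Question~\ref{fcq} in general.

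I expect no real obstacle remains: the substantive content is the pair of facts $\Q\res\, e^G_G=0$ and $e^G_G\neq 0$, which isolate the failure of essential injectivity inside an explicit $1$-dimensional subspace of $\ker{\Q\res}$. The only points requiring care are bookkeeping ones: that an equality $a=b$ in $\burn{G}$ is literally an isomorphism of the represented $G$-sets (so that a nonzero element of $\ker{\Q\res}$ really does furnish \emph{nonisomorphic} finite $G$-sets via $\burn{G}^{+}$), and that finiteness of the number of sheets/orbits survives each reduction --- indeed $\pi^{\ast}$, $\pi_0^{\ast}$, and all the functors in diagram~\eqref{keydiagram} preserve the underlying cardinalities, inflating at worst the number of components or orbits.
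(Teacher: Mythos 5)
Your proposal is correct and follows exactly the paper's route: reduce via Lemma~\ref{alg_red_lemma} and Lemma~\ref{finite_red_lemma} to a proper inclusion of finite groups, then combine Lemma~\ref{ker_equiv} with Bouc's Lemma~\ref{bouc_lemma} and Lemma~\ref{eggne0} to produce the nonisomorphic finite $G$-sets, and invoke Remark~\ref{q1infindex} for the second conclusion. The bookkeeping points you flag (injectivity of $\burn{G}^{+}\hookrightarrow\burn{G}$ and preservation of finiteness of cardinalities through the reductions) are handled correctly.
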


\begin{proof}
For the first conclusion, Lemmas~\ref{alg_red_lemma} and~\ref{finite_red_lemma} reduce the problem to proper inclusion of finite groups.
These cases are handled by Lemmas~\ref{ker_equiv}, \ref{bouc_lemma}, and~\ref{eggne0}.
The second conclusion follows from the first and Remark~\ref{q1infindex}.
\end{proof}

\subsection{Alternative Approach to the Finite Case}\label{ss:finite2}

This subsection identifies a distinguished, $1$-dimensional subspace of $\ker{\Q\res}$ (when $H\lneqq G$), generally very different from $\tn{Span}_{\Q}\cpa{e_G^G}$.
We were led to consider this subspace prior to our awareness of the bases of primitive idempotents and Bouc's result (Lemma~\ref{bouc_lemma}).
We take a moment to motivate this subspace before we prove its existence. By~\eqref{ind_SH} we have:
\begin{align}
\label{im_ind}	\im{\ind} &= \tn{Span}_{\Z}\cpa{\br{K\b G} \mid \br{K}\in SH} < \burn{G} \\
\label{im_Qind}	\im{\Q\ind} &= \tn{Span}_{\Q}\cpa{\br{K\b G} \mid \br{K}\in SH} < \Q\burn{G}
\end{align}

\begin{proposition}\label{inj_SH}
Let $H<G$ where $\card{G}<\infty$.
The restriction of $\res$ to the submodule $\im{\ind}$ is injective.
Equivalently, the restriction of $\Q\res$ to the subspace $\im{\Q\ind}$ is injective.
\end{proposition}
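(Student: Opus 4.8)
The plan is to show that $\res$ restricted to $\im{\ind}$ is injective by exhibiting a left inverse of $\ind$, at least up to a nonzero scalar, built out of $\res$. Concretely, I would study the composite $\res\circ\ind:\burn{H}\to\burn{H}$ and show it is injective; since $\ind:\burn{H}\to\im{\ind}$ is an isomorphism of $\Z$-modules (it carries the basis $\cpa{\br{K\b H}\mid\br{K}\in SH}$ to the linearly independent set $\cpa{\br{K\b G}\mid\br{K}\in SH}$ by~\eqref{ind_SH}), injectivity of $\res\circ\ind$ forces injectivity of $\res$ on $\im{\ind}$. This reduces the problem to the ``double coset'' formula for $\res\circ\ind$, the finite-group Mackey-type identity.

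First I would record that, for $\br{K}\in SH$,
\[
	\res\,\ind\br{K\b H} = \res\br{K\b G} = \sum_{\substack{KgH\in\\ K\b G\s H}} \br{\pa{g^{-1}Kg\,\cap H}\b H}
\]
using~\eqref{ind_SH} and~\eqref{res_SG}. The key structural observation is that the double coset $KeH$ always occurs, and contributes the term $\br{\pa{K\cap H}\b H}=\br{K\b H}$ (since $K<H$), with coefficient $1$. So $\res\,\ind\br{K\b H}=\br{K\b H} + (\text{other terms})$, where every other term $\br{\pa{g^{-1}Kg\cap H}\b H}$ corresponds to a subgroup $g^{-1}Kg\cap H$ of $H$ whose order is at most $\card{K}$. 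Thus, if I order the basis $\cpa{\br{K\b H}\mid\br{K}\in SH}$ so that $\card{K}$ is nondecreasing, the matrix of $\res\circ\ind$ is lower triangular with $1$'s on the diagonal: the only way $\br{K'\b H}$ with $\card{K'}=\card{K}$ can appear in $\res\,\ind\br{K\b H}$ is via a double coset $Kg H$ with $g^{-1}Kg\cap H$ of full order $\card{K}$, hence $g^{-1}Kg\cap H = g^{-1}Kg$, i.e. $g^{-1}Kg<H$, and then $g^{-1}Kg\equiv_H K$ is possible only for... (here one checks that within the same order such a contribution forces $g\in H$, collapsing to the diagonal term, OR one simply argues the matrix is block lower-triangular by order and the diagonal blocks are themselves upper/lower unitriangular by a finer argument). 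Either way the determinant is a product of $1$'s, so $\res\circ\ind$ is an isomorphism of $\Z$-modules, in particular injective.

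Having done that, the $\Z$-module statement gives the $\Q$-vector space statement by tensoring with $\Q$: $\im{\Q\ind}=\Q\otimes\im{\ind}$ by~\eqref{im_ind}--\eqref{im_Qind}, and $\Q\res$ restricted there is $\Q\otimes(\res|_{\im{\ind}})$, which is injective since tensoring a split injection of free $\Z$-modules (or just any injection, after noting $\im{\ind}$ and its image are free) with the flat $\Z$-module $\Q$ preserves injectivity; alternatively, a $\Z$-linear map between free $\Z$-modules that is injective with torsion-free cokernel — here cokernel is a submodule of $\burn{H}$, hence free — stays injective over $\Q$, and in fact here $\res\circ\ind$ is even an isomorphism so there is nothing to worry about.

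The main obstacle I anticipate is the triangularity bookkeeping: making precise that in $\res\,\ind\br{K\b H}$ the diagonal term $\br{K\b H}$ appears exactly once (coefficient $1$) and that every genuinely new term involves a \emph{strictly} smaller subgroup of $H$ up to $H$-conjugacy, so that no cancellation or coincidental coincidence of conjugacy classes can destroy the unitriangular structure. One must be careful that two \emph{different} double cosets $Kg_1H$, $Kg_2H$ can produce $H$-conjugate intersections $g_1^{-1}Kg_1\cap H\equiv_H g_2^{-1}Kg_2\cap H$, which only \emph{adds} to a coefficient and never subtracts, so positivity of the off-diagonal entries (all lie in $\burn{H}^+$-land, cf.~Lemma~\ref{res_BGplus}) combined with the order filtration is what saves the argument. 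Once the filtration by $\card{K}$ is set up correctly, the determinant computation is immediate and the proposition follows.
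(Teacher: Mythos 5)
Your reduction breaks at the very first step: $\ind:\burn{H}\to\im{\ind}$ is \emph{not} an isomorphism in general, because the set $\cpa{\br{K\b G}\mid \br{K}\in SH}$ need not be linearly independent --- it is indexed by $H$-conjugacy classes, but two subgroups of $H$ that are not $H$-conjugate may be $G$-conjugate (``fusion''), in which case they yield the \emph{same} element of $\burn{G}$. Concretely, take $G=\tn{Alt}(4)$ and $H=\fg{(1,2)(3,4),(1,3)(2,4)}$ its Klein four subgroup; the three order-two subgroups $K_1,K_2,K_3$ of $H$ are pairwise non-$H$-conjugate (as $H$ is abelian) but all $G$-conjugate, so $\ind\br{K_1\b H}=\br{K_1\b G}=\br{K_2\b G}=\ind\br{K_2\b H}$ and $\br{K_1\b H}-\br{K_2\b H}\in\ker{\ind}$. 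Consequently $\res\circ\ind$ is \emph{not} injective (its matrix has two identical columns here), and no amount of triangularity bookkeeping can make its determinant nonzero. What the proposition asserts is $\ker{\res}\cap\im{\ind}=\cpa{0}$, i.e.\ $\ker{\pa{\res\,\ind}}=\ker{\ind}$, which is strictly weaker than injectivity of $\res\,\ind$ and cannot be read off from the composite alone. The finer triangularity claims also fail independently: the diagonal entry at $\br{K\b H}$ counts the double cosets $KgH$ with $g^{-1}Kg\cap H\equiv_H K$, which equals $\br{G:H}$ (not $1$) when $K=H\triangleleft G$; and ``within the same order such a contribution forces $g\in H$'' is false in the example above.

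The idea is salvageable if you run it directly on $\im{\Q\ind}$ with an honest basis, namely $\cpa{\br{K\b G}}$ indexed by \emph{$G$-conjugacy classes} of subgroups $G$-subconjugate to $H$, one representative $K<H$ per class. Filtering by $\card{K}$, the order-$\card{K}$ part of $\Q\res\br{K\b G}$ is a nonzero, nonnegative combination of the $\br{K_i\b H}$ with $K_i\equiv_G K$ and $K_i<H$, and these supports are disjoint for distinct $G$-classes of the same order; a downward induction on order then annihilates all coefficients of an element of $\ker{\Q\res}\cap\im{\Q\ind}$. This is essentially the ``direct inductive argument using components with maximal corresponding subgroups'' that the paper mentions and omits. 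The paper's actual proof is different and shorter: it equips $\Q\burn{G}$ and $\Q\burn{H}$ with the inner products \eqref{ip_QBG}--\eqref{ip_QBH} on the primitive idempotent bases, checks that $\Q\res$ and $\Q\ind$ are adjoint (Lemma~\ref{adjointmorphisms}, via \eqref{residem}--\eqref{indidem}), and concludes $\im{\Q\ind}\cap\ker{\Q\res}=\cpa{0}$ from the elementary Lemma~\ref{lin_alg}.
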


To avoid interruption, and since Proposition~\ref{inj_SH} serves mainly as motivation, we postpone a proof of Proposition~\ref{inj_SH} to later in this subsection.
Recall that $K<G$ is \textbf{$G$-subconjugate} to $L<G$ provided $K$ is $G$-conjugate to a subgroup of $L$.
Proposition~\ref{inj_SH} says that any nontrivial element of $\ker{\Q\res}$ must have a nonzero coefficient
on some $\br{K\b G}$ where $K$ is not $G$-subconjugate to $H$.
If $H\lneqq G$, then sometimes $G$ itself is the only subgroup of $G$ not $G$-subconjugate to $H$.
As $\Q\res\br{G\b G}=\br{H\b H}$, we are led to consider whether $\br{H\b H}=\Q\res(v)$ for some $v\in\im{\Q\ind}$.
If so, then our desired element of $\ker\Q\res$ is $v-\br{G\b G}$ and our distinguished, $1$-dimensional subspace of $\ker{\Q\res}$ is $\tn{Span}_{\Q}\cpa{v-\br{G\b G}}$.
We now prove that indeed this is the case.\\

Let $H<G$.
Recall that $\card{G}$ is finite in this subsection.
Define the set of \textbf{derived subgroups} $DS$ of $H$ in $G$ to be the closure of the initial set $DS=\cpa{H}$ under the operation:
let $K\in DS$ and $g\in G$, replace $DS$ with $DS\cup\cpa{g^{-1}Kg \, \cap H}$.
Clearly, every derived subgroup is a subgroup of $H$.
Let $D:=\cpa{\br{K}_G \mid K\in DS}$ be $G$-conjugacy classes of derived subgroups.
We define:
\begin{align*}
	W &:= \tn{Span}_{\Z}\cpa{\br{K\b G} \mid \br{K}\in D} < \im{\ind} <\burn{G}\\
	\Q W &:= \tn{Span}_{\Q}\cpa{\br{K\b G} \mid \br{K}\in D} < \im{\Q\ind} <\Q\burn{G}
\end{align*}
The relevance of derived subgroups will become clear below in diagram~\eqref{main_burn_ring} and Lemma~\ref{delta_lemma}.
In short, $\br{H\b H}$ will equal $\Q\res(v)$ for a unique $v\in\im{\Q\ind}$, this $v$ will lie in $\Q W$,
and $\Q W < \im{\Q\ind}$ is often a \emph{proper} subspace of $\im{\Q\ind}$ thus narrowing the location of $v$.
For example, if $H\triangleleft G$, then $D=\cpa{\br{H}_G}$ and $v$ will equal $(1/\br{G:H})\cdot \br{H\b G}$.\\ 

Let $L<G$ and $K<H$.
Then (see~\cite{yaman}):
\begin{align}
	\label{residem}	\Q\res\pa{e^{G}_{L}}	&=	\sum_{\substack{\br{J}\in SH\\ J\equiv_{G} L}} e^{H}_{J}\\
	\label{indidem}	\Q\ind\pa{e^{H}_{K}}	&=	\br{\tn{N}_G(K):\tn{N}_H(K)} e^{G}_{K}
\end{align}

Define the standard inner product on $\Q\burn{G}$ for the basis $B_G$ of primitive idempotents by:
\begin{equation}\label{ip_QBG}
	\fg{e^G_K,e^G_L}_G := \begin{cases} 1	&	\tn{if $K\equiv_G L$}\\	0	& \tn{otherwise}	\end{cases}
\end{equation}
Define an inner product on $\Q\burn{H}$ by:
\begin{equation}\label{ip_QBH}
	\fg{e^H_K,e^H_L}_H := \begin{cases} \br{\tn{N}_G(K):\tn{N}_H(K)}	&	\tn{if $K\equiv_H L$}\\	\hspace{3.5em} 0	& \tn{otherwise}	\end{cases}
\end{equation}
If $K\equiv_H L$, then $\card{\tn{N}_H(K)}=\card{\tn{N}_H(L)}$ and $\card{\tn{N}_G(K)}=\card{\tn{N}_G(L)}$.
So, $\fg{-,-}_H$ is indeed symmetric.
It is the standard inner product on $\Q\burn{H}$, for the basis $B_H$, weighted by positive integers.\\

Let $U$ and $V$ be $\Q$-vector spaces equipped with inner products $\fg{-,-}_U$ and $\fg{-,-}_V$ respectively.
Two $\Q$-vector space morphisms:
\[ U \stackrel{S}{\longrightarrow} V \stackrel{T}{\longrightarrow} U \]
are \textbf{adjoint} provided $\fg{Su,v}_V = \fg{u,Tv}_U$ for all $u\in U$ and $v\in V$.

\begin{lemma}\label{lin_alg}
Let $U$ and $V$ be finite dimensional $\Q$-vector spaces. If $S$ and $T$ are adjoint, as in the previous paragraph, then $V=\im{S} \oplus \ker{T}$ and, by symmetry, $U=\im{T} \oplus \ker{S}$.
\end{lemma}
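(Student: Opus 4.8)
`The plan is to prove this standard fact of finite-dimensional linear algebra over $\Q$ by direct decomposition arguments. The key is that adjointness immediately gives the orthogonality relation $\im{S}\perp\ker{T}$ in $V$: if $v=Su'$ and $w\in\ker{T}$, then $\fg{v,w}_V=\fg{Su',w}_V=\fg{u',Tw}_U=\fg{u',0}_U=0$. Since $\fg{-,-}_V$ is an inner product (hence nondegenerate), the subspaces $\im{S}$ and $\ker{T}$ intersect trivially, so $\im{S}+\ker{T}=\im{S}\oplus\ker{T}$ is a direct sum inside $V$. It then remains only to check this sum exhausts $V$, i.e., a dimension count.

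For the dimension count I would argue as follows. Working inside $V$, observe that $\ker{T}\subseteq(\im{S})^{\perp}$ by the computation above, so $\dim\ker{T}\leq\dim(\im{S})^{\perp}=\dim V-\dim\im{S}$, the last equality because $\fg{-,-}_V$ is nondegenerate on the finite-dimensional space $V$. On the other hand, the rank--nullity theorem applied to $T:V\to U$ gives $\dim\ker{T}=\dim V-\dim\im{T}$. Combining, $\dim\im{T}\leq\dim\im{S}$. By the symmetric argument with the roles of $S$, $T$, $U$, $V$ interchanged (adjointness is symmetric in the obvious sense, since $\fg{Tv,u}_U=\fg{v,Su}_V$ follows from $\fg{Su,v}_V=\fg{u,Tv}_U$ and symmetry of the inner products), we also get $\dim\im{S}\leq\dim\im{T}$. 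Hence $\dim\im{S}=\dim\im{T}$, and therefore $\dim\im{S}+\dim\ker{T}=\dim\im{S}+(\dim V-\dim\im{T})=\dim V$. Since the sum $\im{S}\oplus\ker{T}$ has dimension $\dim\im{S}+\dim\ker{T}=\dim V$ and sits inside $V$, it equals $V$. This proves $V=\im{S}\oplus\ker{T}$, and the statement $U=\im{T}\oplus\ker{S}$ follows by applying what was just proved with $(U,V,S,T)$ replaced by $(V,U,T,S)$.

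I do not anticipate a genuine obstacle here; the only point requiring the finite-dimensionality hypothesis is the equality $\dim(\im{S})^{\perp}=\dim V-\dim\im{S}$, which uses that the inner product restricts to a nondegenerate pairing on $V$ (true over $\Q$ for a positive-definite form, or more generally for any nondegenerate symmetric bilinear form on a finite-dimensional space — and the forms in \eqref{ip_QBG} and \eqref{ip_QBH} are visibly positive definite). A mild subtlety worth a sentence: one should confirm that the restriction of $\fg{-,-}_V$ to $\im{S}$ is itself nondegenerate so that $\im{S}\cap(\im{S})^{\perp}=0$; this is automatic for a positive-definite form, which is the only case we need, so I would simply remark that $\fg{-,-}_U$ and $\fg{-,-}_V$ are positive definite and invoke the elementary fact that a subspace and its orthogonal complement with respect to a positive-definite form are complementary.`
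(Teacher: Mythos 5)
Your proof is correct and follows essentially the same route as the paper's: adjointness gives $\im{S}\perp\ker{T}$, positive definiteness of the forms gives the trivial intersection, and a rank--nullity count via $\dim\im{S}=\dim\im{T}$ finishes (the paper gets the trivial intersection in one line from $0=\fg{u,TSu}_U=\fg{Su,Su}_V$ and reads the rank equality off $\ker{T}=(\im{S})^{\perp}$, but these are cosmetic differences, and your closing remark about needing definiteness rather than mere nondegeneracy is exactly the right caveat). One trivial transposition: the inequality your displayed computation actually yields is $\dim\im{S}\le\dim\im{T}$, not $\dim\im{T}\le\dim\im{S}$; the symmetric argument supplies the other direction, so the equality and the conclusion are unaffected.
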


\begin{proof}
If $Su\in\ker{T}$, then:
\[
	0=\langle u,\vec{0} \rangle_U = \fg{u,TSu}_U = \fg{Su,Su}_V
\]
and $Su=\vec{0}$ by definiteness. So, $\im{S} \cap \ker{T} = \{\vec{0}\}$.\\

As $S$ and $T$ are adjoint, $\ker{T}=\pa{\im{S}}^{\bot}$
and $\dim \im{S} =\dim\im{T}$.
Hence:
\begin{align*}
	\dim V	&= \dim \ker{T} + \dim \im{T}\\
					&= \dim \ker{T} + \dim \im{S}
\end{align*}
and the lemma follows.
\end{proof}

\begin{remark}\label{rem_comp}
The previous proof shows that:
\[
	TS\pa{U} = T\pa{V} \quad \tn{and} \quad ST\pa{V}=S\pa{U}
\]
To see this, note that $TS\pa{U}\subset T\pa{V}$ and, as $T$ is injective on $\im{S}$:
\[
	\dim TS\pa{U} =\dim\im{S} =\dim\im{T} =\dim T\pa{V}
\]
\end{remark}

The next lemma says that $\Q\res$ and $\Q\ind$ are adjoint $\Q$-vector space morphisms for the inner products~\eqref{ip_QBG} and~\eqref{ip_QBH}.
The proof, left to the reader, is straightfoward using equations~\eqref{residem}--\eqref{ip_QBH}.

\begin{lemma}\label{adjointmorphisms}
Let $\br{L}\in SG$ and $\br{K}\in SH$. Then:
\[
\fg{\Q\res\pa{e^G_L},e^H_K}_H = \fg{e^G_L,\Q\ind\pa{e^H_K}}_G
\]
\qed
\end{lemma}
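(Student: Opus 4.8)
The plan is to verify directly that
\[
\fg{\Q\res\pa{e^G_L},e^H_K}_H = \fg{e^G_L,\Q\ind\pa{e^H_K}}_G
\]
by expanding both sides using the explicit formulas~\eqref{residem} and~\eqref{indidem} together with the definitions~\eqref{ip_QBG} and~\eqref{ip_QBH} of the two inner products, and then comparing.

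First I would compute the left-hand side. By~\eqref{residem}, $\Q\res\pa{e^G_L}=\sum_{\br{J}\in SH,\, J\equiv_G L} e^H_J$, so bilinearity of $\fg{-,-}_H$ gives
\[
\fg{\Q\res\pa{e^G_L},e^H_K}_H = \sum_{\substack{\br{J}\in SH\\ J\equiv_G L}} \fg{e^H_J,e^H_K}_H.
\]
By~\eqref{ip_QBH}, the only surviving term is the one (if any) with $J\equiv_H K$, contributing $\br{\tn{N}_G(K):\tn{N}_H(K)}$. Such a $\br{J}\in SH$ exists and appears in the sum precisely when $K\equiv_G L$ (it is then $\br{K}_H$ itself). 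Hence the left-hand side equals $\br{\tn{N}_G(K):\tn{N}_H(K)}$ if $K\equiv_G L$, and $0$ otherwise.

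Next I would compute the right-hand side. By~\eqref{indidem}, $\Q\ind\pa{e^H_K}=\br{\tn{N}_G(K):\tn{N}_H(K)}\, e^G_K$, so
\[
\fg{e^G_L,\Q\ind\pa{e^H_K}}_G = \br{\tn{N}_G(K):\tn{N}_H(K)}\,\fg{e^G_L,e^G_K}_G,
\]
which by~\eqref{ip_QBG} equals $\br{\tn{N}_G(K):\tn{N}_H(K)}$ if $L\equiv_G K$ and $0$ otherwise. Comparing with the computation of the left-hand side, the two agree in all cases, since $K\equiv_G L$ and $L\equiv_G K$ are the same condition. By bilinearity, the identity then extends from basis elements to all of $\Q\burn{G}$ and $\Q\burn{H}$, so $\Q\res$ and $\Q\ind$ are adjoint. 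The only point requiring a little care—and the closest thing to an obstacle—is the bookkeeping between $G$-conjugacy and $H$-conjugacy of subgroups of $H$ when identifying which term in the sum over $\br{J}\in SH$ survives; this is exactly the content of the well-definedness remarks already made after~\eqref{ip_QBH} about $\card{\tn{N}_H(K)}$ and $\card{\tn{N}_G(K)}$ depending only on the conjugacy class.
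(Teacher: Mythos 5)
Your proof is correct and is precisely the computation the paper has in mind: it explicitly leaves the verification to the reader as ``straightforward using equations~\eqref{residem}--\eqref{ip_QBH},'' and your case analysis (both sides equal $\br{\tn{N}_G(K):\tn{N}_H(K)}$ when $K\equiv_G L$ and $0$ otherwise) carries that out correctly, including the one delicate point of isolating the unique $H$-class $\br{K}_H$ inside the sum over $\br{J}\in SH$ with $J\equiv_G L$.
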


\begin{remark}
Lemmas~\ref{lin_alg} and~\ref{adjointmorphisms} immediately prove Proposition~\ref{inj_SH}.
We originally discovered and proved Proposition~\ref{inj_SH} using topological pullback and a direct inductive
argument using components with maximal corresponding subgroups.
We omit the details of this alternative approach and merely mention that it may be of independent interest since, in the finite group case, it may extend to arbitrary covers (over subgroups of $H$) using Zorn's lemma.
It is not clear whether this approach extends to finite sheeted covers (over subgroups of $H$) in the infinite group case.
Inclusion of a point into the circle shows that $\fa$ need not be essentially injective on finite \emph{component} covers (over subgroups of $H$) in the infinite group case.
\end{remark}

Lemmas~\ref{lin_alg} and~\ref{adjointmorphisms} and Remark~\ref{rem_comp} yield the key commutative diagram of $\Q$-vector space morphisms:
\begin{equation}\begin{split}\label{main_burn_ring}
\xymatrix@R=0pt{
    \Q\burn{G}			\ar[r]^-{\Q\res}						&	\Q\burn{H}	\ar[r]^-{\Q\ind}	&	\Q\burn{G}\\
    \rotatebox{90}{$\subset$} & \rotatebox{90}{$\subset$} & \rotatebox{90}{$\subset$}\\
    \im{\Q\ind}	\ar[r]^-{\rest{\Q\res}}_-{\cong}  & \im{\Q\res}	 \ar[r]^-{\rest{\Q\ind}}_-{\cong}	&	\im{\Q\ind} \\
    \rotatebox{90}{$\subset$} & \rotatebox{90}{$\subset$} & \rotatebox{90}{$\subset$}\\
    \Q W	\ar[r]^-{\rest{\Q\res}}_-{\cong}    & \Q\res (\Q W)	 \ar[r]^-{\rest{\Q\ind}}_-{\cong}	&	\Q W  }
\end{split}\end{equation}
Remark~\ref{rem_comp} yields the two middle row isomorphisms.
The lower left morphism is an isomorphism since it is the restriction of the isomorphism directly above it.
For the lower right morphism, call it $\psi$, notice that $\Q\ind \, \Q\res (\Q W) \subset \Q W$ by the definition of derived subgroups and by~\eqref{res_SG} and~\eqref{ind_SH}.
So, $\psi$ maps into $\Q W$ injectively since it is a restriction of the morphism directly above it.
Thus, the bottom row composition $\Q W\to \Q W$ is injective, and hence an isomorphism since $\Q W$ is finite dimensional.
As the lower left morphism is an isomorphism, $\psi$ is an isomorphism as indicated in~\eqref{main_burn_ring}.

\begin{lemma}\label{delta_lemma}
Let $H< G$ where $\card{G}<\infty$ ($H$ need not be a proper subgroup).
Then, there exists a unique $v\in \im{\Q\ind}$ such that $\Q\res(v) =\br{H\b H}$.
Furthermore, $v\in \Q W$.
\end{lemma}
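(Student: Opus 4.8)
The plan is to exploit the commutative diagram~\eqref{main_burn_ring} together with the fact, established directly above it, that every morphism in the bottom two rows is an isomorphism. First I would observe that $\br{H\b H} = \Q\res\br{G\b G}$, so $\br{H\b H}$ certainly lies in the image $\Q\res\pa{\Q\burn{G}} = \Q\burn{H}$; but more is needed, namely that it lies in the image of the \emph{restricted} map $\rest{\Q\res}$ on $\im{\Q\ind}$. For existence, I would note that $\rest{\Q\res}:\im{\Q\ind}\to\im{\Q\res}$ is surjective (indeed an isomorphism, by the middle row of~\eqref{main_burn_ring}), and that $\br{H\b H}\in\im{\Q\res}$; hence there is some $v\in\im{\Q\ind}$ with $\Q\res(v)=\br{H\b H}$.

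Next, for uniqueness, the point is precisely Proposition~\ref{inj_SH} (equivalently, the injectivity of $\rest{\Q\res}$ on $\im{\Q\ind}$, which is the content of the left isomorphism in the middle row of~\eqref{main_burn_ring}): if $\Q\res(v)=\Q\res(v')=\br{H\b H}$ with $v,v'\in\im{\Q\ind}$, then $v-v'\in\ker\pa{\rest{\Q\res}}\cap\im{\Q\ind}=\cpa{\vec 0}$, so $v=v'$.

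Finally, to pin down $v\in\Q W$, I would use the bottom row of~\eqref{main_burn_ring}. We must first check that $\br{H\b H}\in\Q\res(\Q W)$. Since $H\in DS$, the conjugacy class $\br{H}_G$ lies in $D$, so $\br{H\b G}\in W$, and by~\eqref{res_SG} the decomposition of $\res\br{H\b G}$ involves only coefficients on $\br{\pa{g^{-1}Hg\cap H}\b H}$, i.e. on derived subgroups; in particular $\res\br{H\b G}$ has positive coefficient on $\br{H\b H}$. More to the point, $\Q\res(\Q W)$ is spanned by the $\Q\res\br{K\b G}$ for $\br{K}\in D$, and one checks (again via~\eqref{res_SG} and the definition of $DS$ as closed under $K\mapsto g^{-1}Kg\cap H$) that $\Q\res(\Q W)$ is exactly the span of $\cpa{\br{K\b H}\mid\br{K}\in D}$ inside $\Q\burn{H}$, which contains $\br{H\b H}$. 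Hence by the lower-left isomorphism $\rest{\Q\res}:\Q W\xrightarrow{\ \cong\ }\Q\res(\Q W)$ there exists $w\in\Q W$ with $\Q\res(w)=\br{H\b H}$; but $w\in\Q W\subset\im{\Q\ind}$, so by the uniqueness already proved $w=v$, giving $v\in\Q W$.

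The main obstacle I anticipate is the bookkeeping verifying that $\Q\res(\Q W)$ is precisely $\tn{Span}_\Q\cpa{\br{K\b H}\mid\br{K}\in D}$ — i.e. that applying $\Q\res$ to derived-subgroup cosets in $G$ produces only (and all) derived-subgroup cosets in $H$; this rests entirely on the closure property defining $DS$ together with the double-coset formula~\eqref{res_SG}, and is essentially the reason the notion of derived subgroup was introduced. Everything else is a formal consequence of the isomorphisms already recorded in~\eqref{main_burn_ring}.
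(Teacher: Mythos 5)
Your existence and uniqueness arguments are correct and coincide with the paper's: both are exactly the left-hand isomorphisms in diagram~\eqref{main_burn_ring} (equivalently, Proposition~\ref{inj_SH}) applied to $\br{H\b H}=\Q\res\br{G\b G}\in\im{\Q\res}$. The gap is in the last step, where you need $\br{H\b H}\in\Q\res(\Q W)$. You propose to get this from the claim that $\Q\res(\Q W)$ is \emph{exactly} $\tn{Span}_{\Q}\cpa{\br{K\b H}\mid \br{K}\in D}$, deferring the verification as bookkeeping. Only the containment ``$\subseteq$'' follows from~\eqref{res_SG} and the closure property of $DS$; the direction you actually need is not bookkeeping and fails in general. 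First, the spanning set is ill-defined as written: $D$ consists of $G$-conjugacy classes, while $\br{K\b H}$ depends on the $H$-conjugacy class of $K$, and two derived subgroups can be $G$-conjugate without being $H$-conjugate (fusion). When that happens, $\dim \Q\res(\Q W)=\dim \Q W=\card{D}$ is strictly smaller than the number of $H$-conjugacy classes met by $DS$, so $\Q\res(\Q W)$ cannot contain all the vectors $\br{K\b H}$ with $K\in DS$. Concretely, if $K_1,K_2\in DS$ with $K_1\equiv_G K_2$ but $K_1\not\equiv_H K_2$, then $\res\br{K_1\b G}=\res\br{K_2\b G}$ carries positive coefficients on both $\br{K_1\b H}$ and $\br{K_2\b H}$ and cannot separate them; for the same reason, the positivity of the coefficient of $\br{H\b H}$ in $\res\br{H\b G}$ does not give $\br{H\b H}\in\Q\res(\Q W)$, because the triangular elimination (ordering derived subgroups by cardinality) that would isolate $\br{H\b H}$ breaks exactly at such fused pairs.

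The conclusion $\br{H\b H}\in\Q\res(\Q W)$ is nevertheless true, and the paper obtains it by going around the \emph{right}-hand column of~\eqref{main_burn_ring} instead: $\Q\ind\br{H\b H}=\br{H\b G}$ lies in $\Q W$ because $\br{H}_G\in D$; the bottom-right isomorphism produces a unique $u\in\Q\res(\Q W)$ with $\Q\ind(u)=\br{H\b G}$; and since $\rest{\Q\ind}$ is injective on all of $\im{\Q\res}$, which contains both $u$ and $\br{H\b H}$, one gets $u=\br{H\b H}$. With that substitution, your final paragraph (lower-left isomorphism to produce $w\in\Q W$, then uniqueness to conclude $w=v$) goes through verbatim. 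So the repair is short, but it genuinely requires the adjointness machinery encoded on the right of~\eqref{main_burn_ring}, not the direct computation of $\Q\res(\Q W)$ you sketch.
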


\begin{proof}
Notice that:
\[
\xymatrix@R=0pt{
    \br{G\b G}			\ar@{|-{>}}[r]^-{\Q\res}						&	\br{H\b H}	\ar@{|-{>}}[r]^-{\Q\ind}	&	\br{H\b G}}
\]
where $\br{H\b H}\in \im{\Q\res}$, and $\br{H\b G}\in \Q W$ since $\br{H}\in D$.
The two isomorphisms on the right in diagram~\eqref{main_burn_ring} imply that $\br{H\b H}\in \Q\res (\Q W)$.
The two isomorphisms on the left in diagram~\eqref{main_burn_ring} now yield the desired conclusions.
\end{proof}

\begin{remark}\label{HnormalGDelta}
Of course, $v$ need not lie in $W$ nor in $\im{\ind}$.
For example, if $H$ is a proper, normal subgroup of $G$, then $v=(1/\br{G:H})\cdot \br{H\b G}$ by~\ref{hstarI} and uniqueness of $v$ in Lemma~\ref{delta_lemma}.
\end{remark}

\begin{remark}
Lemma~\ref{delta_lemma} completes our alternative proof of Corollary~\ref{question_cor}.
In particular, Lemma~\ref{delta_lemma} may replace Lemmas~\ref{bouc_lemma} and~\ref{eggne0} in the proof of Corollary~\ref{question_cor} since it provides a nontrivial element $v-\br{G\b G}$ of $\ker\Q\res$ when $H\lneqq G$.
\end{remark}

Lemma~\ref{delta_lemma} motivates the following definition.
Let $H<G$ where $\card{G}<\infty$.
The \textbf{deviation} of $H$ in $G$ is the smallest natural number $\Delta:=\Delta(G,H)$
such that $\Delta\cdot \br{H\b H}$ lies in the image of the composition $\res\, \ind:\burn{H}\to\burn{H}$.
Lemma~\ref{delta_lemma} implies that $\Delta$ exists.
Evidently, $\Delta(G,H)$ is an isomorphism invariant of the pair, and depends only on the $G$-conjugacy class of $H$ in $G$.
The deviation $\Delta(G,H)$ seems to be a compound measure of how ``non-normal'' $H$ is in $G$ and how small (cardinality-wise) $H$ is in $G$.
We find $\Delta(G,H)$ to be a natural and interesting quantity, so we state three conjectures for further study.

\begin{conjectures}
Let $H<G$ where $\card{G}<\infty$.
Let $\Delta=\Delta(G,H)$.
Then:
\begin{enumerate}\setcounter{enumi}{\value{equation}}
\item\label{conj_1} $\Delta=1$ if and only if $H=G$.
\item\label{conj_2} $\br{G:H}$ divides $\Delta$.
\item\label{conj_3} $\Delta$ divides $\card{G}$. 
\setcounter{equation}{\value{enumi}}
\end{enumerate}
\end{conjectures}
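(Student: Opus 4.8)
\emph{Proof proposal.} First reformulate the deviation in Burnside-ring terms. By Lemma~\ref{delta_lemma} and Proposition~\ref{inj_SH}, let $v\in\im{\Q\ind}$ be the unique element with $\Q\res\pa{v}=\br{H\b H}$. If $\Delta\cdot\br{H\b H}=\res\pa{\ind\pa{s}}$ for some $s\in\burn{H}$, then $\ind\pa{s}$ and $\Delta v$ both lie in $\im{\Q\ind}$ and have the same image under $\Q\res$, so $\ind\pa{s}=\Delta v$ by Proposition~\ref{inj_SH}; conversely $\Delta v\in\im{\ind}$ gives $\Delta\cdot\br{H\b H}=\res\pa{\Delta v}\in\im{\res\,\ind}$. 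Since $\im{\ind}$ is the span of part of the standard $\Z$-basis of $\burn{G}$ (the $\br{K\b G}$ with $K$ a subgroup of $H$, see~\eqref{im_ind}), it is a saturated sublattice, the set $\cpa{k\in\Z\mid k\,\br{H\b H}\in\im{\res\,\ind}}$ is a subgroup of $\Z$, and $\Delta$ is its positive generator, equal to the least $k$ with $kv\in\im{\ind}$. It therefore suffices to work with any $u\in\im{\ind}$ satisfying $\res\pa{u}=\Delta\cdot\br{H\b H}$, written $u=\sum b_{\br{K}}\br{K\b G}$ with $b_{\br{K}}\in\Z$ and the sum over $G$-conjugacy classes of subgroups of $H$.

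Conjectures~\ref{conj_1} and~\ref{conj_2}: let $\epsilon_G:\burn{G}\to\Z$ be the ring homomorphism sending a finite $G$-set to its cardinality (additive on disjoint unions, multiplicative on products, unital since $\card{G\b G}=1$). As restriction leaves underlying sets unchanged, $\epsilon_G=\epsilon_H\circ\res$, so $\epsilon_G\pa{u}=\epsilon_H\pa{\Delta\cdot\br{H\b H}}=\Delta$. On the other hand $\epsilon_G\pa{u}=\sum b_{\br{K}}\br{G:K}$, and each index appearing equals $\br{G:H}\br{H:K}$, a multiple of $\br{G:H}$. Hence $\br{G:H}$ divides $\Delta$, which is Conjecture~\ref{conj_2}; Conjecture~\ref{conj_1} follows at once, since $\Delta=1$ forces $\br{G:H}=1$, while $H=G$ makes $\res\,\ind$ the identity so $\Delta=1$.

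Conjecture~\ref{conj_3}: by~\eqref{residem}, $\Q\res$ sends $\sum_{\br{K}}e^{G}_{K}$, summed over $G$-conjugacy classes of subgroups of $H$, to $\sum_{\br{J}\in SH}e^{H}_{J}=\br{H\b H}$; so by uniqueness $v=\sum_{\br{K}}e^{G}_{K}$ over those classes. By Gluck's formula~\eqref{gluck_formula}, $\card{N_G\pa{K}}\cdot e^{G}_{K}\in\burn{G}$ and is supported on the $\br{L\b G}$ with $L<K$; since $\card{N_G\pa{K}}$ divides $\card{G}$, we get $\card{G}\cdot e^{G}_{K}\in\burn{G}$, still supported on subgroups of $H$. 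Summing, $\card{G}\cdot v\in\im{\ind}$, hence $\card{G}\cdot\br{H\b H}=\res\pa{\card{G}\cdot v}\in\im{\res\,\ind}$, so $\Delta$ divides $\card{G}$. (When $H\triangleleft G$ this recovers $\Delta=\br{G:H}$ via~\ref{hstarI}, consistent with all three statements.)

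The main obstacle is the first paragraph: one must be certain the definition of $\Delta$ through $\im{\res\,\ind}\subset\burn{H}$ really translates into the denominator statement for $v$, which rests on $\im{\ind}$ being a saturated sublattice of $\burn{G}$ together with Proposition~\ref{inj_SH}; and, in Conjecture~\ref{conj_3}, one must check that the supports of the $\card{G}\cdot e^{G}_{K}$ genuinely stay inside $\im{\ind}$. If those lattice-theoretic points prove delicate, the fallback is to compute $v$ directly in the basis $\cpa{\br{K\b G}\mid\br{K}\in D}$ by inverting the triangular change of basis to primitive idempotents furnished by Gluck's formula; this still delivers $\Delta\mid\card{G}$ and, via $\epsilon_G$, $\br{G:H}\mid\Delta$.
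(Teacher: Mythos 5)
The paper does not prove this statement: it is stated as a set of conjectures, verified only when $H\triangleleft G$ and computationally for many pairs via MAGMA. Your argument, as far as I can check, actually establishes all three, and the key steps are sound. The reformulation in your first paragraph is correct: by Proposition~\ref{inj_SH} any $u\in\im{\ind}$ with $\res(u)=k\br{H\b H}$ must equal $kv$, so $\cpa{k\in\Z\mid k\br{H\b H}\in\im{\res\,\ind}}=\cpa{k\mid kv\in\im{\ind}}$ is a subgroup of $\Z$ generated by $\Delta$; since $\im{\ind}$ is the $\Z$-span of the sub-basis $\cpa{\br{K\b G}\mid K \tn{ $G$-subconjugate to } H}$, $\Delta$ is the lcm of the denominators of $v$ in the standard basis. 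The augmentation argument for~\ref{conj_2} is clean: $\epsilon_G=\epsilon_H\circ\res$ forces $\Delta=\epsilon_G(\Delta v)=\sum b_{\br{K}}\br{G:K}$ with every $K<H$ up to conjugacy, and each $\br{G:K}=\br{G:H}\br{H:K}$; this also yields~\ref{conj_1}. For~\ref{conj_3}, the identification $v=\sum e^G_K$ (sum over $G$-classes of subgroups subconjugate to $H$) follows from~\eqref{residem}, the uniqueness in Lemma~\ref{delta_lemma}, and the fact that each $e^G_K\in\im{\Q\ind}$ by~\eqref{indidem}; Gluck's formula~\eqref{gluck_formula} then shows $\card{N_G(K)}e^G_K$ is an integer combination of $\br{L\b G}$ with $L<K<H$, so $\card{G}v\in\im{\ind}$ and $\Delta\mid\card{G}$. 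I verified the formula for $v$ against the paper's $\tn{Sym}(3)$ example ($v=-\tfrac13\br{\cpa{e}\b G}+\br{H\b G}$, giving $\Delta=3$) and it is consistent with Remark~\ref{HnormalGDelta} and the order-$192$ example.

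Two small points you should make explicit. First, your computation of $\Q\res\pa{\sum_K e^G_K}$ uses that the primitive idempotents of $\Q\burn{H}$ sum to the identity $\br{H\b H}$; this is standard (it follows from the mark isomorphism $\Q\burn{H}\cong\prod_{\br{J}\in SH}\Q$, see Gluck or Bouc) but is not stated in the paper, so cite it. Second, in that same sum each $\br{J}\in SH$ must be picked up exactly once across the classes $\br{K}_G$; this holds because each $J<H$ is $G$-conjugate to exactly one $\br{K}_G$ in your index set, but it deserves a sentence. With those additions the argument is complete, and as a bonus it answers the paper's request for a formula for $\Delta$ (the lcm of the denominators of $\sum_K e^G_K$) and explains the observed relation to the idempotents $e^G_L$.
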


Evidently, $H=G$ implies $\Delta=1$, so~\ref{conj_2} implies~\ref{conj_1}.
If $H\triangleleft G$, then $\Delta=\br{G:H}$ by Remark~\ref{HnormalGDelta}.
So, all three conjectures hold when $H\triangleleft G$.
We have verified all three conjectures for thousands of pairs $(G,H)$ using MAGMA.
It should be interesting to find a formula for $\Delta$, to understand relations between $\Delta(G,H)$ and $\Delta(G,H')$ for $H'<H$, and possibly to compare $\Delta$ with $e^G_G$ from the previous subsection.
We suspect that $\Delta$ has intimate relations with certain entries in the table of marks of $G$.\\

We close this section with three examples that display various phenomena.

\begin{example}
If $\Delta=\br{G:H}$, then $H$ need not be normal in $G$.
Consider $G=\tn{Sym}(3)$ and $H=\fg{\tau}$ any subgroup of $G$ generated by a transposition $\tau\in G$.
Then, $\Delta=3$ (use~\ref{pbtransG2set} with $h:H\hookrightarrow G$ inclusion), but $H$ is not normal in $G$.
This example also shows that neither of $\Delta$ and $\card{H}$ need divide the other. 
\end{example}

\begin{example}
Let $G$ be the group $\fg{192,181}$ in MAGMA notation.
$G$ is a nonabelian group of order $192$.
Let $H$ be the $42$nd subgroup of $G$ using MAGMA's intrinsic ordering of {\ttfamily{Subgroups(G)}}$=SG$.
$H$ is a nonabelian group of order $32$.
Let $\iota:H\hookrightarrow G$ be inclusion.
Then:
\begin{itemize}
\item $\iota^{\ast}$ is not essentially injective since $H\lneqq G$.
\item $\card{SG}=46<\card{SH}=47$.
\item $\tn{NC}(G,H)=G$, so $\iota^{\ast}$ has nullity zero.
\item There exist non-$G$-conjugate subgroups $L$ and $K$ of $G$ such that $\iota^{\ast}(L\b G)\cong \iota^{\ast}(K\b G)$.
\end{itemize}
In other words, the last item says that the matrix of $\res$ (with respect to any orderings of $SG$ and $SH$) has two identical columns.
Neither of the subgroups $L$ or $K$ is $G$-subconjugate to $H$ since $\card{H}=32$ and $\card{L}=\card{K}=6$.
In this example, $\Delta(G,H)=12$, so $\br{G:H}$ divides $\Delta$ and $\Delta$ divides $\card{G}$.
\end{example}

\begin{example}\label{zp2example}
Let $p>0$ be prime.
Let $G:=\Z\s p^2\Z$ and let $H:=p \Z \s p^2\Z \triangleleft G$.
For the ordered bases $\pa{\br{\cpa{e}\b G},\br{H\b G},\br{G\b G}}$ and $\pa{\br{\cpa{e}\b H},\br{H\b H}}$ of $\burn{G}$ and $\burn{H}$ respectively,
the matrix of $\res$ is: 
\[
M= \quad \kbordermatrix{ & \br{\cpa{e}\b G}	&	\br{H\b G}	&	\br{G\b G}\\
	\br{\cpa{e}\b H}	&	p	&	0	&	0\\
		\br{H\b H}			&	0	&	p	&	1}
\]
Evidently, $a=\begin{bmatrix} 0	& 1	& 0 \end{bmatrix}^{T}$ and $b= \begin{bmatrix} 0	& 0	& p \end{bmatrix}^{T}$ are the smallest elements in $\burn{G}^{+}$ (for the $L_1$-norm) such that $a\neq b$ and $Ma=Mb$.
These examples, for various primes $p>0$, show that there is no global upper bound on the
number of components or sheets needed to detect failure of essential injectivity.
For other examples, simpler topologically but with infinite fundamental groups, let $p>0$ be prime and consider $f:S^1\to S^1$ given by $z\mapsto z^p$.
\end{example}

\section{Essential surjectivity}\label{s:esssurj}

Groups and group sets are not necessarily finite in this section.
First, observe that if $h:G_1\to G_2$ is a group homomorphism with nontrivial kernel, then $h^{\ast}$ is not essentially surjective since the pullback of no group set is isomorphic to $\cpa{e}\b G_1$ by~\ref{pbtransG2set}.
Thus, we need only consider injective homomorphisms.
Recall diagrams~\eqref{keysquare2}--\eqref{keydiagram}.
Evidently, one of the three functors $\fa$, $\varepsilon$, and $\fa_{\sharp}$ is essentially surjective if and only if all three are essentially surjective.
If $\fs$ is injective, then $\lambda$ is an isomorphism in~\eqref{factorfs}.
So, by~\ref{hisoequiv}, $\lambda^{\ast}$ is an equivalence, and $\iota^{\ast}$ is essentially surjective if and only if $\fs^{\ast}$ is essentially surjective.
Therefore, throughout this section we consider an inclusion homomorphism $\iota:H\hookrightarrow G$.
Call $(G,H)$ an \textbf{essentially surjective pair} provided $\iota^{\ast}$ is essentially surjective.

\begin{proposition}\label{ess_surj_prop}
The pair $(G,H)$ is essentially surjective if and only if for each $K<H$ there exists $L<G$ such that: (i) $G=LH$ and (ii) $L\cap H = K$.
\end{proposition}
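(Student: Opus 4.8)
The plan is to pass to transitive objects on both sides and then read the condition straight off item~\ref{pbtransG2set}, applied with $h=\iota$ so that $I=\im{\iota}=H$. Since algebraic pullback respects disjoint union, since every group set is a disjoint union of transitive ones, and since $\iota^{\ast}$ leaves underlying sets unchanged (so it carries nonempty $G$-sets to nonempty $H$-sets), a disjoint union $\iota^{\ast}\pa{\sum_i L_i\b G}\cong\sum_i \iota^{\ast}\pa{L_i\b G}$ is transitive if and only if there is exactly one index $i$ and that single pullback $\iota^{\ast}\pa{L\b G}$ is itself transitive. By~\ref{pbtransG2set} we have $\iota^{\ast}\pa{L\b G}\cong\sum_{LgH\in L\b G\s H}\pa{g^{-1}Lg\cap H}\b H$, so $\iota^{\ast}\pa{L\b G}$ is transitive precisely when $L\b G\s H$ is a single double coset, i.e.\ $G=LH$; in that case, taking $g=e$, it is isomorphic to $\pa{L\cap H}\b H$. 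This observation is the engine for both implications.

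For the backward implication, assume the stated condition. Given an arbitrary $H$-set $T$, write $T\cong\sum_j K_j\b H$ with each $K_j<H$, choose for each $j$ a subgroup $L_j<G$ with $G=L_jH$ and $L_j\cap H=K_j$, and set $S:=\sum_j L_j\b G$. By the paragraph above, $\iota^{\ast}\pa{L_j\b G}\cong\pa{L_j\cap H}\b H=K_j\b H$, hence $\iota^{\ast}\pa{S}\cong\sum_j K_j\b H\cong T$, so $(G,H)$ is essentially surjective.

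For the forward implication, fix $K<H$ and apply essential surjectivity to the transitive $H$-set $K\b H$: some $G$-set $S\cong\sum_i L_i\b G$ satisfies $\iota^{\ast}\pa{S}\cong K\b H$. Transitivity of $K\b H$ forces a single summand with $G=L_iH$ and $\pa{L_i\cap H}\b H\cong K\b H$; since transitive $H$-sets are classified by $H$-conjugacy of their point stabilizers, $L_i\cap H\equiv_H K$, say $L_i\cap H=h^{-1}Kh$ with $h\in H$. Put $L:=hL_ih^{-1}$. Then $L\cap H = hL_ih^{-1}\cap hHh^{-1} = h\pa{L_i\cap H}h^{-1}=K$, and $LH = hL_i\pa{h^{-1}H} = hL_iH = h\pa{L_iH} = hG = G$, so $L$ witnesses the condition for $K$.

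The only step I expect to require any care is this last conjugation adjustment, which promotes the a priori $H$-conjugacy $L_i\cap H\equiv_H K$ to an honest equality $L\cap H=K$ while preserving $G=LH$; it is also exactly the move showing that the displayed condition for $K$ is equivalent to the condition for every $H$-conjugate of $K$, so imposing it for each subgroup $K<H$ rather than for each $H$-conjugacy class loses nothing. Everything else is an unwinding of item~\ref{pbtransG2set} together with the fact that $\iota^{\ast}$ commutes with disjoint unions and the classification of transitive group sets.
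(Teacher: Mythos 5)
Your proof is correct and follows essentially the same route as the paper's: reduce to transitive group sets, read off from item~\ref{pbtransG2set} that $\iota^{\ast}\pa{L\b G}$ is transitive exactly when $G=LH$ (in which case it is $\pa{L\cap H}\b H$), and then conjugate by an element of $H$ to upgrade the $H$-conjugacy $L\cap H\equiv_H K$ to equality while preserving $G=LH$. The conjugation step you flag is exactly the one the paper performs (with $x=h^{-1}$ in your notation), so there is nothing further to add.
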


\begin{proof}
First, we prove the reverse implication.
Each $H$-set is a disjoint union of transitive $H$-sets, and each transitive $H$-set is isomorphic to $K\b H$ for some $K<H$.
Thus, it suffices to consider $K\b H$ where $K<H$.
By hypothesis, there exists $L<G$ such that $G=LH$ and $L\cap H = K$.
In particular, $L\b G\s H =\cpa{G}$.
By~\ref{pbtransG2set}, $\iota^{\ast}(L\b G) \cong (L\cap H)\b H = K\b H$.\\

Next, let $K<H$. By hypothesis, there exists a $G$-set $S$ such that $\iota^{\ast}(S)\cong K\b H$.
As $K\b H$ is transitive, $S$ is necessarily transitive.
So, $S\cong A \b G$ for some $A<G$, and $\iota^{\ast}(A\b G)\cong K\b H$.
By~\ref{pbtransG2set}, $\card{A\b G \s H}=1$ and $(A\cap H)\b H\cong K\b H$.
The former implies $G=AeH=AH$;
the latter implies $A\cap H \equiv_{H} K$.
Hence, $K=x^{-1}(A\cap H)x=x^{-1}Ax \, \cap H$ for some $x\in H$.
Define $L:=x^{-1}Ax <G$.
So, $L\cap H=K$ and conjugating the identity $G=AH$ by $x$ yields $G=LH$.
\end{proof}

Let $H<G$. A \textbf{complement} of $H$ in $G$ is a subgroup $L<G$ such that $G=LH$ and $L\cap H = \cpa{e}$.
Proposition~\ref{ess_surj_prop} says that a necessary condition for $(G,H)$ to be an essentially surjective pair is that $H$ has a complement in $G$.
Recall a few properties of complements.
Let $L$ be a complement of $H$ in $G$.
Each element of $G$ is uniquely a product $lh$ where $l\in L$ and $h\in H$.
If $G$ is finite, then $\card{G}=\card{L}\cdot\card{H}$ and $G=HL=LH$.
Complements need not be unique nor even $G$-conjugate (consider $G=(\Z\s 2\Z) \oplus (\Z\s 2\Z)$ and $H=(\Z\s 2\Z) \oplus \cpa{e}$).
If $H$ or $L$ is normal in $G$, then $G$ is an \emph{internal semidirect product} of $L$ and $H$.
If $H$ and $L$ are normal in $G$, then $G$ is the \emph{internal direct sum} of $L$ and $H$.

\begin{remark}
If $L$ is a complement of $H$ in $G$, then $G$ is, by definition, an internal \textbf{Zappa-Sz{\'e}p product} of $L$ and $H$ (see~\cite{szep}).
This product generalizes the semidirect product and is variously called the \emph{knit product} or \emph{double crossed product}.
\end{remark}

\begin{example}
$(G,\cpa{e})$ and $(G,G)$ are essentially surjective pairs for any group $G$.
\end{example}

\begin{example}
Let $p>0$ be prime.
Let $G:=\Z\s p^2\Z$ and let $H:=p \Z \s p^2\Z \triangleleft G$.
The pair $(G,H)$ is not essentially surjective since $H$ has no complement in $G$.
Alternatively, the matrix in Example~\ref{zp2example} shows that no $G$-set pulls back to an $H$-set isomorphic to $\cpa{e}\b H$.
Hence, $G$ finite abelian (indeed, finite cyclic) does not imply $(G,H)$ is essentially surjective.
\end{example}

\begin{example}[A Class of Essentially Surjective Pairs]
Let $G$ be an external semidirect product $A\rtimes B$ where $A$ and $B$ are arbitrary groups
such that the (left) action of $B$ on $A$ (denoted $b\cdot a$) preserves each subgroup of $A$ setwise.
Let $H:=A\times\cpa{e} \triangleleft G$.
We show that $(G,H)$ is an essentially surjective pair.
Let $K<H$.
Then, $K=C\times\cpa{e}$ for some $C<A$.
Let $L:=C\times B \subset G = A\rtimes B = A\times B$ (equal as sets).
By assumption, the action of $B$ sends $C$ into itself.
It follows that $L<G$.
To see that $LH=G$, let $(a,b)\in G$.
Then, $(e,b)\in L$, $(b^{-1}\cdot a, e)\in H$, and:
\[
	(a,b)= (e(b\cdot(b^{-1}\cdot a)),be) =  (e,b)(b^{-1}\cdot a, e) \in LH
\] 
as desired.
Finally, $L\cap H=C\times \cpa{e} = K$.
So, Proposition~\ref{ess_surj_prop} implies that $(G,H)$ is an essentially surjective pair.
This class includes all direct products, since these correspond to the case where $B$ acts trivially on $A$.
To see that this class is more general than direct products, let $A$ be cyclic of prime order and let $B$ act on $A$ nontrivially.
For instance, let $A=\Z\s p\Z=\fg{a}$ where $p\geq 3$ is prime, and let $B=\Z\s 2\Z =\fg{b}$ act on $A$ by $b\cdot a^{k}:=a^{-k}$.
If $p=3$, then this particular example is isomorphic to $(\tn{Sym}(3),\fg{(1,2,3)})$.
\end{example}

On the other hand, not every essentially surjective pair arises from a semidirect product splitting,
and not every semidirect product yields an essentially surjective pair, as shown by the next two examples.

\begin{example}
Let $G:=\tn{Sym}(4)$ and $H:=\fg{(1,2,3)}$.
The pair $(G,H)$ is essentially surjective.
$H$ has exactly three complements in $G$, namely the three subgroups of $G$ of order $8$.
These three complements are pairwise $G$-conjugate.
However, neither $H$ nor any of its three complements is normal in $G$.
Hence, $G$ cannot split as an internal semidirect product of $H$ and any subgroup of $G$.
\end{example}

\begin{example}
Let $G$ be the dihedral group of order $8$, namely the subgroup $\fg{(1,2,3,4),(1,3)}$ of $\tn{Sym}(4)$.
Let $H:=\fg{(1,3),(2,4)}$, a Klein $4$-group in $G$.
As $\br{G:H}=2$, $H\triangleleft G$.
$H$ has two complements in $G$, namely $L_1:=\fg{(1,2)(3,4)}$ and $L_2:=\fg{(1,4)(2,3)}$.
In particular, $G$ splits as an internal semidirect product as $H\rtimes L_1$ and as $H\rtimes L_2$.
Nevertheless, $(G,H)$ is not an essentially surjective pair since the subgroups $\fg{(1,3)}$ and $\fg{(2,4)}$ of $H$ have no corresponding subgroup $L<G$ as required by Proposition~\ref{ess_surj_prop}.
\end{example}

One may view Proposition~\ref{ess_surj_prop} as providing an obstruction, for each $K<H$, to the pair $(G,H)$ being essentially surjective.
$K=\cpa{e}$ mandates that $H$ has a complement $L$ in $G$. 
$K=H$ yields no obstruction.

\begin{question}
Which subgroups $K$ of $H$ yield nontrivial obstructions to essential surjectivity?
What obstructions arise from cyclic subgroups $K$ of $H$?
\end{question}

\end{document}